\def\namedlabel#1#2{\begingroup
	#2%
	\def\@currentlabel{#2}%
	\phantomsection\label{#1}\endgroup
}
\newcommand\email[1]{\href{mailto:#1}{ \nolinkurl{#1}}}
\renewcommand{\theequation}{\arabic{section}.\arabic{equation}}
\newtheorem{theorem}{Theorem}[section]
\newtheorem{definition}[theorem]{Definition}
\newtheorem{lemma}[theorem]{Lemma}
\newtheorem{corollary}[theorem]{Corollary}
\newtheorem{proposition}[theorem]{Proposition}
\newtheorem{remark}[theorem]{Remark}
\newtheorem{condition}[theorem]{Condition}
\newtheorem{example}{Example}[section]
\def\blemma{\begin{lemma}}
	\def\elemma{\end{lemma}}
\def\bproposition{\begin{proposition}}
	\def\eproposition{\end{proposition}}
\def\ttheorem{\begin{theorem}}
	\def\etheorem{\end{theorem}}
\def\bcorollary{\begin{corollary}}
	\def\ecorollary{\end{corollary}}
\def\bremark{\begin{remark}}
	\def\eremark{\end{remark}}
\def\bcondition{\begin{condition}}
	\def\econdition{\end{condition}}
\newcommand{\opnorm}[1]{{\vert\kern-0.25ex\vert\kern-0.25ex\vert #1 
		\vert\kern-0.25ex\vert\kern-0.25ex\vert}}
\def\benumerate{\begin{enumerate}}
	\def\eenumerate{\end{enumerate}}
\def\bitemize{\begin{itemize}}
	\def\eitemize{\end{itemize}}
\def\beqlb{\begin{eqnarray}}
	\def\eeqlb{\end{eqnarray}}
\def\beqnn{\begin{eqnarray*}}
	\def\eeqnn{\end{eqnarray*}}
\def\ar{\!\!\!&}
\def\proof{\noindent{\it Proof.~~}}
\def\qed{\hfill$\Box$\medskip}
\begin{document}


 \title{\bf \Large  Mean-Field Limits for Nearly Unstable Hawkes Processes}
 \author{Gr\'egoire Szymanski\footnote{DMATH, Universit´e du Luxembourg, Email: gregoire.szymanski@uni.lu}
 \quad and \quad  
 Wei Xu\footnote{School of Mathematics and Statistics, Beijing Institute of Technology, Email: xuwei.math@gmail.com}}
 
 \date{}
 
 \maketitle
\begin{abstract} 
In this paper, we establish general scaling limits for nearly unstable Hawkes processes in a mean-field regime by extending the method introduced by Jaisson and Rosenbaum \cite{JaissonRosenbaum2016}. Under a mild asymptotic criticality condition on the self-exciting kernels $\{\phi^n\}$, specifically $\|\phi^n\|_{L^1} \to 1$, we first show that the scaling limits of these Hawkes processes are necessarily stochastic Volterra diffusions of affine type. Moreover, we establish a propagation of chaos result for Hawkes systems with mean-field interactions, highlighting three distinct regimes for the limiting processes, which depend on the asymptotics of $n(1-\|\phi^n\|_{L^1})^2$. These results provide a significant generalization of the findings by Delattre et al. \cite{DelattreFournierHoffmann2016}.
\end{abstract}

\medskip
 
\noindent \textbf{\textit{MSC2020 subject classifications.}}  Primary 60F05, 60G55, 60G22; secondary 60F17, 60G57.

\noindent \textbf{\textit{Key words and phrases.}} Hawkes process, mean-field limit, scaling limit, propagation of chaos, interacting particle system.

  \section{Introduction} 
 \label{Sec.Introduction}
 \setcounter{equation}{0}

\subsection{Motivation}

A $d$-variate Hawkes process $N := (N_1, \ldots, N_d)$, when $d \geq 1$, is a $d$-dimensional time-inhomogeneous Poisson process, where the intensity $\lambda$ depends linearly on the past of the process $N$. More precisely, its intensity at time $t$, denoted by $\lambda(t) := (\lambda_1(t), \ldots, \lambda_d(t))$, is defined as
\begin{equation}
\label{eqn.Intensity}
\lambda_i(t) := \mu_i + \sum_{j=1}^d \sum_{\tau_{j,k} < t} \phi_{ij}(t-\tau_{j,k}) = \mu_i + \sum_{j=1}^d \int_{(0,t)} \phi_{ij}(t-s) \, dN_j(s), \quad t \geq 0, \, i = 1, \ldots, d,
\end{equation}
where the \textsl{background rate} $\mu \in \mathbb{R}_+^d$ represents the external excitation, and the \textsl{kernel} $\phi \in L^1_{\rm loc}(\mathbb{R}_+; \mathbb{R}_+^{d \times d})$ captures the self-/mutual excitation effects between components. Here, $\tau_{j,k}$ denotes the arrival time of the $k$-th event of type $j$. Given a background rate $\mu$ and a kernel $\phi$, a Hawkes process can be uniquely characterized in distribution (see for instance \cite{BremaudMassoulie1996}).\\

Initially introduced by G. Hawkes \cite{Hawkes1971a,Hawkes1971b} to study interactions between earthquakes and their aftershocks, Hawkes processes have since been applied in a wide range of fields, including neuroscience \cite{GrunRotter2010, OkatanWilsonBrown2005}, genome analysis \cite{Reynaud-BouretSchbath2010}, social sciences \cite{LewisMohlerBrantinghamBertozzi2011,Mohler2011}, and interactions in social networks \cite{ZhouZhaSong2013}. Hawkes-based models have also gained popularity in finance, where they play a key role in risk estimation \cite{Chavez-DemoulinDavisonMcNeil2005}, credit modeling \cite{GieseckeGoldbergDing2009}, and market microstructure modeling \cite{BacryDelattreHoffmannMuzy2013a, BacryDelattreHoffmannMuzy2013, BacryMuzy2014, Hewlett2006}. Their success is primarily due to two factors: \textsl{ease of use} and \textsl{flexibility}. More specifically, Hawkes-based models not only provide clear and interpretable dynamics, but many quantities of interest can also be computed in closed form. Furthermore, these models can be extended to accommodate complex stochastic systems, incorporating features such as self-inhibition \cite{CostaGrahamMarsalleTran2020}, multi-dimensional structures \cite{DelattreFournierHoffmann2016}, non-linearity \cite{Zhu2013}, or quadratic feedback mechanisms \cite{BlancDonierBouchaud2017}. For a comprehensive review of Hawkes processes and their various applications, we refer to \cite{BacryMastromatteoMuzy2015}.

In many cases, Hawkes processes are used to model microstructural behaviors, such as the arrival of orders in financial markets or the spiking of single neurons in neuroscience. A natural question is how these microstructural descriptions translate to macroscopic scales after appropriate scaling. Such limits can provide valuable insights into the macroscopic properties of these complex systems. Early studies on the long-term behavior of Hawkes processes primarily focused on their \textsl{stability}. A Hawkes process $N$ is considered \textsl{stable} (in the sense that it converges to a stationary state; see \cite[Definition 1]{BremaudMassoulie1996} for more details) if $\rho( \|\phi\|_{L^1}) < 1$, where
\begin{equation}\label{Con.Stability}
\|\phi\|_{L^1} := \int_0^\infty \phi(t) \, dt = \bigg( \int_0^\infty \phi_{ij}(t) \, dt \bigg)_{1\leq i,j \leq d},
\end{equation}
and $\rho(A)$ denotes the spectral radius of the matrix $A$. This condition is commonly referred to as the \textsl{stability condition}. It was first identified in \cite{HawkesOakes1974} using the cluster representation of Hawkes processes, along with the criticality of discrete-state branching processes. The result was later extended to the non-linear case in \cite{BremaudMassoulie1996} and to the univariate case with $\|\phi\|_{L^1} \leq 1$ and $\mu = 0$ in \cite{BremaudMassoulie2001}.

Under the stability condition, Bacry, Delattre, Hoffmann, and Muzy \cite{BacryDelattreHoffmannMuzy2013} proved a functional law of large numbers for $N$, stating that
\begin{equation*}
 	\sup_{0 \leq t \leq 1} \Big| \frac{N(Tt)}{T} -c_0 \cdot t \Big| \overset{\rm a.s.}\to 0
\end{equation*}
where $c_0 := \big(\mathbf{I} - \|\phi\|_{L^1}\big)^{-1} \mu$ and $\mathbf{I}$ is the identity matrix. Moreover, under an additional light-tailed condition,
\begin{equation}\label{eqn.LightTail}
\int_0^\infty \sqrt{t} \phi(t)\, dt < \infty,
\end{equation}
they also established a functional central limit theorem,
\begin{equation*}
\sqrt{T} \left( \frac{N(Tt)}{T} - c_0 \cdot t \right) \overset{d}{\to} c_1 \cdot B(t) \quad \text{as } T \to \infty,
\end{equation*}
where $B$ is a standard $d$-dimensional Brownian motion. Recently, Horst and Xu \cite{HorstXu2023} generalized these results in the univariate case by relaxing condition \eqref{eqn.LightTail}. Additionally, they established a functional central limit theorem for an unstable Hawkes process $N$ with $\|\phi\|_{L^1} = 1$, where the limiting process is a Gaussian process with long-range dependence.
 
The condition $\|\phi\|_{L^1} < 1$ can be relaxed by considering a sequence of nearly unstable Hawkes processes $N^n$, where the stability condition of \cite{BremaudMassoulie1996} is asymptotically violated. In \cite{JaissonRosenbaum2015, JaissonRosenbaum2016}, Jaisson and Rosenbaum consider a sequence of Hawkes processes $(N^n, \lambda^n)$ with kernel $\phi^n = a_n \cdot \phi$ for some $\|\phi\|_{L^1} = 1$ and $a_n < 1$. They construct scaling limits that converge to a non-negative limit process, exhibiting both affine and self-exciting properties. The limit process differs significantly depending on whether the kernel is light-tailed or heavy-tailed:

\begin{itemize}
    \item If $\int_0^\infty t \phi(t) \, dt < \infty$, the rescaled intensity converges weakly, as $a_n \to 1$, to a Cox-Ingersoll-Ross (CIR) process $\xi$ defined by  
    \begin{equation}\label{eq:volerra:limit:intro0}
    \xi(t) =  \int_0^t b\big(a - \xi(s)\big) \, ds +  \int_0^t \sigma\sqrt{\xi(s)} \, dB(s),\quad t\geq 0.
    \end{equation} 
    Moreover, the rescaled Hawkes process converges weakly to the integrated process of $\xi$.
    
    \item If $\int_0^\infty t \phi(t) \, dt = \infty$ and $\int_T^\infty \phi(t) \, dt \sim C \cdot T^{-\alpha}$ as $T\to\infty$ for some $\alpha \in (1/2, 1)$, then the rescaled Hawkes process converges weakly, as $a_n \to 1$, to the integrated process of a non-negative process $\xi$ that solves the following stochastic Volterra equation:  
    \begin{equation}
    \label{eq:volerra:limit:intro}
    \xi(t) =  \int_0^t \frac{(t-s)^{\alpha-1}}{\Gamma(1-\alpha)} \cdot b\big(a - \xi(s)\big) \, ds +  \int_0^t \frac{(t-s)^{\alpha-1}}{\Gamma(1-\alpha)} \cdot \sigma\sqrt{\xi(s)} \, dB(s),\quad t\geq 0. 
    \end{equation}
    The weak convergence of the rescaled intensity to $\xi$ was established in a recent work \cite{HorstXuZhang2023a}.
\end{itemize}
 
Analogous results for the multivariate case are established in \cite{ElEuchFukasawaRosenbaum2018, RosenbaumTomas2021}. These limit results are of great interest in mathematical finance, as they provide a microeconomic foundation for the Heston model and link Hawkes processes to the theory of rough volatility introduced in \cite{GatheralJaissonRosenbaum2018}. Recently, they have been adapted to study the properties of rough models in \cite{ElEuchRosenbaum2019b} and to explain market impact characteristics in \cite{durin2023two,JusselinRosenbaum2020}. These results have also been generalized to quadratic Hawkes processes in \cite{DandapaniJusselinRosenbaum2021}, establishing a connection between quadratic Hawkes models and rough volatility models to study the Zumbach effect.

All the aforementioned results mainly focus on the long-term behavior of Hawkes processes, which is particularly relevant for financial applications. However, different asymptotic regimes can also be considered in various contexts. For instance, as an important application in neuroscience, Hawkes processes have been used to model neuron interactions, and their mean-field limits are widely accepted and considered more appropriate in this setting. Typically, we are interested in a particle system consisting of $n$ exchangeable Hawkes processes $N^n_1, \dots, N^n_n$ sharing a common intensity 
 \begin{equation}
 \label{eq:intensity:mean_field}
 \lambda^n_i(t) = \lambda^n(t) = \frac{1}{n} \bigg( \mu + \sum_{j=1}^n \int_0^t \phi(t-s) \, dN^n_j(s) \bigg).
 \end{equation}
 Under some non-degeneracy conditions,  Delattre, Fournier and Hoffmann \cite{DelattreFournierHoffmann2016} proved a propagation of chaos as $n \to \infty$. In this regime,  particles become  mutually independent asymptotically and behave as a sequence of orthogonal inhomogeneous Poisson processes with common intensity given by the unique solution of the Volterra integral equation 
 \begin{equation*}
 \overline{\lambda}_t = \mu + \int_0^t \phi(t-s) \overline{\lambda}_s \, ds.
 \end{equation*}
 Later, this result was generalized to age-dependent processes in \cite{chevallier2017mean} and to space-dependent networks in \cite{chevallier2019mean}, where more general limit results were obtained. For age-dependent Hawkes processes, the mean-field limit can be expressed as a system of partial differential equations, similar to those introduced in \cite{pakdaman2009dynamics}, while space-dependent networks converge to a CIR-type diffusion. Other processes related to Hawkes processes specific to neuroscience have been studied in \cite{erny2021conditional, pfaffelhuber2022mean}. Fluctuations and deviations around these limits have also been explored in \cite{GaoGaoZhu2023,GaoZhu2023}.

\subsection{Main results and organization of the paper}

 The main objective of this work is twofold. 
 Firstly, we generalize the results in \cite{JaissonRosenbaum2015, JaissonRosenbaum2016} by considering more flexible assumptions on the self-exciting kernel $\phi^n$. 
 Specifically, we remove the assumption that $\phi^n = a_n \phi$ and instead consider general kernels $\phi^n$ satisfying $\|\phi^n\| \to 1$.
 This allows us to derive more general limiting stochastic Volterra processes that belong to the class studied in \cite{jaber2019affine}.
 In particular, our limit models in the univariate case uniquely solve stochastic Volterra equations in the form of 
 \beqlb
 \label{eq:new_volerra:limit:intro}
\xi(t) = F(t)\cdot a + \int_0^t f(t-s) \cdot \sigma\sqrt{\xi(s)} \, dB(s), \quad t \geq 0,
 \eeqlb
 that significantly extend the limit models \eqref{eq:volerra:limit:intro0}-\eqref{eq:volerra:limit:intro},
 where $a,\sigma>0$, $F$ is a probability distribution function on $\mathbb{R}_+$ with density $f$ uniquely determined by the Laplace transform 
 \beqnn
 \int_0^\infty e^{-zt} f(t)dt=	\Big(b + c\cdot z + \int_0^\infty (1-e^{-z x}) \nu(dx) \Big)^{-1},\quad z\geq 0,
 \eeqnn
 for some constants $b, c \geq 0$ and $\sigma$-finite measure $\nu(dx)$ on $(0, \infty)$ satisfying $\int_0^\infty (1 \wedge x) \, \nu(dx) < \infty$. We also establish analogous results for the multidimensional setting.

Secondly, we address the seemingly different problem of mean-field limits for interacting systems composed of Hawkes particles.
 Specifically, we consider a particle system of $n$ exchangeable Hawkes processes $N^n_1, \dots, N^n_n$ with a common intensity given by \eqref{eq:intensity:mean_field}, but the background rate $\mu$ and kernel $\phi$ are replaced by $\mu^n$ and $\phi^n$ respectively.
 Due to the symmetry among particles, the aggregate process $\sum_{k=1}^n N^n_k$ remains a Hawkes process. Our preceding limit theorems demonstrate that after an appropriate rescaling, it asymptotically behaves as described in \eqref{eq:new_volerra:limit:intro}. 
 Based on the important observation that removing a finite number of coordinates from the particle system \eqref{eq:intensity:mean_field} does not alter its asymptotic behavior, we first couple it with a new Hawkes particle system in which the common intensity of all particles inherits the perturbations of all particles except finite fixed ones in the original particle system. 
 Then we obtain the mean-field limits for \eqref{eq:intensity:mean_field} directly from those of the new system that are established by using a weak convergence result for empirical distributions of a sequence of interchangeable abstracted-valued random variables.

 The rest of this paper is structured as follows. 
 In Section~\ref{Sec.MainResultSL}, we formulate the results concerning the scaling limits of multidimensional theorem. 
 In Section~\ref{Sec.MainResultMF}, we provide the mean-field limits for Hawkes processes and state a propagation of chaos result.   
 All results are proved one-by-one in Sections~\ref{Sec.Proof01}-\ref{Sec.Proof03}. 
 A summary about Hawkes processes and some additional statements regarding the regularity of our limit processes are presented in Appendices~\ref{Sec.AppendixHawkes} and \ref{Sec.AppendixHCY} respectively.

\subsection{Notation}

Let $\mathbb{B}$ be a metric space with metric $d$, and let $m, n \in \mathbb{Z}_+$. We denote by $\mathbb{B}^{m \times n}$ the space of all $m \times n$ matrices $A$ with elements $A_{ij} \in \mathbb{B}$ for $1 \leq i \leq m$ and $1 \leq j \leq n$. For $p \in (0, \infty]$ and an interval $I \subset \mathbb{R}_+$, let $L^p(I; \mathbb{B})$ denote the space of all $p$-integrable $\mathbb{B}$-valued functions $g$ defined on $I$, equipped with the norm 
\[
\|g\|_{L^p_I} := \left( \int_I d(g(x), 0)^p \, dx \right)^{1/p}.
\]
We also write $\|g\|_{L^p_T} = \|g\|_{L^p_{[0,T]}}$ and $\|g\|_{L^p} = \|g\|_{L^p_{[0,\infty)}}$. Let $L^p_{\rm loc}(\mathbb{R}_+; \mathbb{B})$ be the space of all functions $g$ such that $g \in L^p([0, T]; \mathbb{B})$ for any $T \geq 0$.

Let $\mathrm{M}(\mathbb{B})$ denote the space of all finite measures $\nu$ on $\mathbb{B}$. For a function $g$ defined on $\mathbb{B}$, whenever it is well-defined, we use the notation
\[
\nu(g) := \int_\mathbb{B} g(x) \, \nu(dx).
\]
We endow the space $\mathrm{M}(\mathbb{B})$ with the weak topology, meaning that $\nu_n \to \nu$ if $\nu_n(g) \to \nu(g)$ for any bounded and continuous function $g$ on $\mathbb{B}$.

The convolution of two functions $g, h \in L^1_{\rm loc}(\mathbb{R}_+; \mathbb{R})$ is defined as
\[
(g * h)(t) := \int_0^t g(t-s) h(s) \, ds, \quad t \geq 0.
\]
We denote by $g^{*k}$ the $k$-th convolution of $g$ with itself. For $g \in L^1_{\rm loc}(\mathbb{R}_+; \mathbb{R}^{m \times l})$ and $h \in L^1_{\rm loc}(\mathbb{R}_+; \mathbb{R}^{l \times n})$, their convolution is defined as
\[
(g * h)_{ij} := \sum_{k=1}^l (g_{ik} * h_{kj}), \quad 1 \leq i \leq m, \, 1 \leq j \leq n.
\]

We also write $g\circ h$ for the composition of two functions $g$ and $h$, i.e., $g\circ h(t)=g(h(t))$, when this makes sense.

Throughout this paper, we use the generic constant $C$, which may vary from line to line.

\section{Scaling limits for multidimensional Hawkes processes}
\label{Sec.MainResultSL} 
\setcounter{equation}{0}
 
\subsection{Statement of the main results}

In this section, we establish a scaling limit theorem for a sequence of multivariate Hawkes processes, generalizing the results of \cite{JaissonRosenbaum2015, JaissonRosenbaum2016}.

For each $n \geq 1$, we consider a filtrated probability space $(\Omega, \mathscr{F}, \mathscr{F}_t, \mathbf{P})$ on which a $d$-dimensional Hawkes process $N^n$ is defined, with intensity $\lambda^n$ given by
\begin{equation*}
\lambda^n(t) = \mu^n + \int_0^t \phi^n(t-s) \, dN^n(s), \quad t \geq 0,
\end{equation*}
for some background rate $\mu^n \in \mathbb{R}_+^d$ and self-exciting kernel $\phi^n \in L^1_{\rm loc}(\mathbb{R}_+; \mathbb{R}_+^{d \times d})$. We refer to Appendix~\ref{Sec.AppendixHawkes} for a precise definition of Hawkes processes (see Definition~\ref{def:hawkes}).

Let $\psi^n$ denote the resolvent of the second kind associated with $\phi^n$, defined as
\begin{equation*}
\psi^n = \sum_{k=1}^{\infty} (\phi^n)^{*k}.
\end{equation*}
 We denote by $\Lambda^n$ the compensator of $N^n$ and define $M^n:= N^n - \Lambda^n$ that is the compensated point process associated with $N^n$. 
 Using the martingale representation \eqref{MartingaleRep}, we have
\begin{equation} \label{MartingaleRep.n}
\lambda^n(t) = \mu^n + \big\|\psi^n\big\|_{L^1_t} \cdot \mu^n + \int_0^t \psi^n(t-s) \, dM^n(s), \quad t \geq 0,
\end{equation}
from which we see that the long-term behavior of $(N^n, M^n)$ is fully determined by the asymptotic properties of $\psi^n$.

Different modes of convergence can be considered in this setup. In this work, we focus on weak convergence in the space $\mathrm{D}(\mathbb{R}_+; \mathbb{B})$ of all $\mathbb{B}$-valued c\`{a}dl\`{a}g functions on $\mathbb{R}_+$, endowed with the Skorokhod topology, or in the space $\mathrm{C}(\mathbb{R}_+; \mathbb{B})$ of all $\mathbb{B}$-valued continuous functions on $\mathbb{R}_+$, endowed with the uniform topology for some metric space $\mathbb{B}$ (see \cite{Billingsley1999, JacodShiryaev2003}). Recall that $\overset{\rm v}{\to}$ denotes vague convergence. To obtain a non-degenerate scaling limit, we assume the following $L^2$-condition and convergence condition hold for the resolvent sequence $\{ \psi^n \}_{n \geq 1}$.

\begin{condition} \label{Main.Condition.01}
Assume that there exists a positive sequence $\{ \beta_n \}_{n \geq 1}$ with $\beta_n \to 0$ as $n \to \infty$, such that for each $1 \leq i, j \leq d$ and $T \geq 0$, we have
\begin{equation*}
\sup_{n \geq 1} \, \beta_n \cdot \big\|\psi^n_{i,j}\big\|_{L^2_T} < \infty,
\end{equation*}
and there exists a $\sigma$-finite measure $F_{ij}(dt)$ on $\mathbb{R}_+$ such that as $n \to \infty$,
\begin{equation*}
F_{ij}^n(dt) := \beta_n \cdot \psi^n_{i,j}(t) \, dt \overset{\rm v}{\to} F_{ij}(dt).
\end{equation*}
\end{condition}

A direct consequence of Theorem~10 in \cite{MeyerZheng1984}, along with Condition~\ref{Main.Condition.01}, is that the $\sigma$-finite measure $F_{ij}(dt)$ is absolutely continuous with respect to the Lebesgue measure, with a density function $f_{ij} \in L^2_{\rm loc}(\mathbb{R}_+; \mathbb{R}_+)$. Therefore, for any $g \in \mathrm{C}(\mathbb{R}_+; \mathbb{R})$ and $T \geq 0$, we have
\begin{equation} \label{eqn.212}
\lim_{n\to\infty} \int_0^T g(t) \beta_n \psi^n_{i,j}(t) \, dt = \int_0^T g(t) f_{ij}(t) \, dt.
\end{equation}
For simplicity, we denote by $F_{ij}(t) := F_{ij}([0, t])$ the cumulative distribution function of $F_{ij}(dt)$.

We are now ready to state our first result concerning the weak convergence of the spatially-scaled processes $\{(\Lambda^{(n)}, N^{(n)}, M^{(n)}) \}_{n \geq 1}$, defined as
\begin{equation*}
\Lambda^{(n)}(t) := \beta_n^2 \cdot \Lambda^n(t), \quad
N^{(n)}(t) := \beta_n^2 \cdot N^n(t), \quad \text{and} \quad
M^{(n)}(t) := \beta_n \cdot M^n(t), \quad t \geq 0, \, n \geq 1.
\end{equation*}
The following result, proved in Section~\ref{Sec.Proof01}, details the convergence of these processes.

\begin{theorem} \label{Thm.MainResultSL.01}
If Condition~\ref{Main.Condition.01} holds and $\beta_n \cdot \mu^n \to a \in \mathbb{R}_+^d$ as $n \to \infty$, then the following hold.
\begin{enumerate}
    \item[(1)] The sequence of rescaled processes $\{(\Lambda^{(n)}, N^{(n)}, M^{(n)})\}_{n \geq 1}$ is $C$-tight.
    
    \item[(2)] For any limit $(U, X, Z) \in \mathrm{C}(\mathbb{R}_+; \mathbb{R}_+^d \times \mathbb{R}^d)$, we have $U \overset{\rm a.s.}{=} X$. Moreover, the process $X$ is non-decreasing, and there exists a $d$-dimensional Brownian motion $B = (B_1, \cdots, B_d)$ such that
    \begin{equation} \label{eqn.LimitSVE}
    Z(t) = \big(B_i \circ X_i(t)\big)_{1 \leq i \leq d} \quad \text{and} \quad X(t) = \big\|F\big\|_{L^1_t} \cdot a + f * Z(t), \quad t \geq 0.
    \end{equation}
\end{enumerate}
\end{theorem}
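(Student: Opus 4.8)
The plan is to follow the Jaisson–Rosenbaum strategy, suitably adapted to the general resolvent sequence. The starting point is the martingale representation \eqref{MartingaleRep.n}, which after spatial rescaling reads
\[
\lambda^{(n)}(t):=\beta_n\lambda^n(t)=\beta_n\mu^n+\beta_n\|\psi^n\|_{L^1_t}\cdot\mu^n+\int_0^t\beta_n\psi^n(t-s)\,dM^n(s).
\]
Since $\Lambda^n(t)=\int_0^t\lambda^n(s)\,ds$, we get $\Lambda^{(n)}(t)=\beta_n^2\Lambda^n(t)=\int_0^t\beta_n\lambda^{(n)}(s)\,ds$, and the key identity to exploit is the Volterra-type relation obtained by integrating: one writes $N^{(n)}=\Lambda^{(n)}+M^{(n)}$ and uses the resolvent equation $\psi^n=\phi^n+\phi^n*\psi^n$ to express $\Lambda^{(n)}$ in terms of convolutions of $\beta_n\psi^n$ against $M^{(n)}$ plus the deterministic drift term $\beta_n\|\psi^n\|_{L^1_t}\cdot\mu^n$. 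Concretely, integrating \eqref{MartingaleRep.n} and rearranging should yield
\[
\Lambda^{(n)}(t)=\Big(\textstyle\int_0^t\beta_n\psi^n(s)\,ds\Big)\cdot(\beta_n\mu^n)+\big(G^n*M^{(n)}\big)(t),
\]
where $G^n(t)=\int_0^t\beta_n\psi^n(s)\,ds$ has the right scaling, so that the deterministic part converges to $\|F\|_{L^1_t}\cdot a$ and the stochastic part is governed by \eqref{eqn.212}.

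The first substantive step is \textbf{tightness}. I would establish $C$-tightness of $\{M^{(n)}\}$ by computing the predictable quadratic variation: $\langle M^{(n)}_i\rangle_t=\beta_n^2\Lambda^n_i(t)=\Lambda^{(n)}_i(t)$, so controlling $M^{(n)}$ reduces to an a priori $L^1$ (or $L^2$) bound on $\mathbf{E}[\Lambda^{(n)}_i(t)]$, uniform in $n$ on compacts. Taking expectations in \eqref{MartingaleRep.n} gives $\mathbf{E}[\lambda^n(t)]=\mu^n+\|\psi^n\|_{L^1_t}\cdot\mu^n$, hence $\mathbf{E}[\Lambda^{(n)}_i(t)]=\beta_n^2\int_0^t(\mu^n+\|\psi^n\|_{L^1_s}\mu^n)_i\,ds$; the $L^2$-bound in Condition~\ref{Main.Condition.01} together with $\beta_n\mu^n\to a$ and Cauchy–Schwarz ($\|\psi^n\|_{L^1_s}\le\sqrt{s}\,\|\psi^n\|_{L^2_s}$) delivers a uniform bound of order $\int_0^t(1+\beta_n\sqrt{s}\cdot\beta_n\|\psi^n\|_{L^2_s})\,ds=O(t)$. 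With this bound, the Aldous or Kolmogorov–Chentsov criterion for the martingale $M^{(n)}$ (using $\langle M^{(n)}\rangle$ increments) gives $C$-tightness, the continuity of limits following because the jumps of $M^{(n)}$ are of size $\beta_n\to0$. Tightness of $N^{(n)}$ and $\Lambda^{(n)}$ then follows since $N^{(n)}=\Lambda^{(n)}+M^{(n)}$ and $\Lambda^{(n)}$ is an integral of a process with the same uniform bound, hence equicontinuous.

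The second step is \textbf{identifying the limit}. Fix a convergent subsequence with limit $(U,X,Z)$. Monotonicity of $X$ is inherited from $N^{(n)}$ (nondecreasing). The identity $U=X$ a.s.\ follows because $N^{(n)}-\Lambda^{(n)}=M^{(n)}$ has $\langle M^{(n)}\rangle_t=\Lambda^{(n)}_i(t)$ bounded, so $M^{(n)}\to Z$ with $\langle Z_i\rangle_t=X_i(t)$ — wait, more carefully: $N^{(n)}-\Lambda^{(n)}=M^{(n)}\to$ (something), and since $\Lambda^{(n)}$ and $N^{(n)}$ share the same limit candidate up to the martingale part, the point is that $M^{(n)}$ converges to $Z$, and $U=\lim\Lambda^{(n)}$, $X=\lim N^{(n)}$ satisfy $X-U=Z-\lim M^{(n)}$... the cleanest route is: $N^{(n)}=\Lambda^{(n)}+M^{(n)}$ so $X=U+Z'$ where $Z'$ is the limit of $M^{(n)}$; but we also named $Z$ as the limit of $M^{(n)}$, so $X=U+Z$. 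Combined with the representation: passing to the limit in $\Lambda^{(n)}(t)=G^n*M^{(n)}(t)+(\int_0^t\beta_n\psi^n)(\beta_n\mu^n)$ using \eqref{eqn.212} (to handle $G^n\to F$) and the convergence $M^{(n)}\to Z$ in a joint sense, one gets $U(t)=\|F\|_{L^1_t}\cdot a+f*Z(t)$. Thus $X=U+Z=\|F\|_{L^1_t}\cdot a+f*Z+Z$; comparing with the claimed $X(t)=\|F\|_{L^1_t}\cdot a+f*Z(t)$ shows I must instead argue $U\overset{\text{a.s.}}{=}X$ directly and separately — which holds because $M^{(n)}=N^{(n)}-\Lambda^{(n)}\to X-U$ but also $\langle M^{(n)}\rangle=\Lambda^{(n)}\to U=X$ forces... the correct statement, as in \cite{JaissonRosenbaum2016}, is that $N^{(n)}$ and $\Lambda^{(n)}$ have the same limit because $M^{(n)}/\beta_n$... no: $M^{(n)}=\beta_n M^n$ is already rescaled. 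The resolution: $\sup_{s\le t}|M^{(n)}(s)|$ does \emph{not} vanish (it converges to $Z$), but $N^{(n)}-\Lambda^{(n)}=M^{(n)}$, and the claim $U=X$ is simply false unless $Z\equiv 0$ — so the theorem must mean that $U$ and $X$ are \emph{both} equal to the same process defined by the Volterra equation, with the relation $N^{(n)}(t)=\beta_n^2 N^n(t)$ and $\Lambda^{(n)}(t)=\beta_n^2\Lambda^n(t)=\int_0^t\beta_n\lambda^{(n)}(s)ds$ being compared at the level of $\beta_n^2 N^n$ vs $\beta_n^2\Lambda^n$: since $N^n(t)-\Lambda^n(t)=M^n(t)$ has variance $\Lambda^n(t)\asymp\beta_n^{-2}t$, we get $\beta_n^2 M^n(t)=\beta_n\cdot(\beta_n M^n(t))$... no, $\beta_n^2 N^n-\beta_n^2\Lambda^n=\beta_n^2 M^n=\beta_n\cdot M^{(n)}\to 0$ since $\beta_n\to0$ and $M^{(n)}$ is tight! \emph{That} is the argument: $N^{(n)}-\Lambda^{(n)}=\beta_n M^{(n)}\Rightarrow U\overset{\text{a.s.}}{=}X$.

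\smallskip

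So the clean plan, rewritten: \textbf{Step 1 (tightness):} derive the a priori bound $\sup_n\mathbf{E}[\Lambda^{(n)}_i(t)]<\infty$ on compacts via Condition~\ref{Main.Condition.01}, then deduce $C$-tightness of $M^{(n)}$ (quadratic variation $=\Lambda^{(n)}$, jumps $\le\beta_n\to0$), of $\Lambda^{(n)}$ (equicontinuous integral), and of $N^{(n)}=\Lambda^{(n)}+\beta_n M^{(n)}$. \textbf{Step 2:} on a convergent subsequence with limit $(U,X,Z)$, note $N^{(n)}-\Lambda^{(n)}=\beta_n M^{(n)}\to0$, hence $U\overset{\text{a.s.}}{=}X$, and $X$ nondecreasing. \textbf{Step 3 (martingale limit):} identify $Z$ as a time-changed Brownian motion: since $M^{(n)}$ is a locally square-integrable martingale with $\langle M^{(n)}_i\rangle_t=\Lambda^{(n)}_i(t)\to X_i(t)$ and vanishing jumps, the martingale CLT (Theorem VIII.3.11 in \cite{JacodShiryaev2003}) gives $Z_i=B_i\circ X_i$ for independent Brownian motions $B_i$ (independence from the off-diagonal covariation $\langle M^{(n)}_i,M^{(n)}_j\rangle=0$ for $i\ne j$, as $N^n_i,N^n_j$ never jump simultaneously). \textbf{Step 4 (Volterra equation):} rewrite $\Lambda^{(n)}=G^n*M^{(n)}+(\int_0^\cdot\beta_n\psi^n)(\beta_n\mu^n)$ with $G^n(t)=\int_0^t\beta_n\psi^n(s)ds\to F(t)$; pass to the limit using \eqref{eqn.212}, the convergence $\beta_n\mu^n\to a$, and a continuous-mapping/approximation argument for the stochastic convolution $G^n*M^{(n)}\to F*Z=\big(\int_0^\cdot f(\cdot-s)dZ(s)\big)$ — note $F*dZ=f*Z$ after integration by parts — to obtain $X(t)=U(t)=\|F\|_{L^1_t}\cdot a+f*Z(t)$.

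The main obstacle I expect is \textbf{Step 4}, the passage to the limit in the stochastic convolution $G^n*M^{(n)}$: the kernels $\beta_n\psi^n$ converge only vaguely and only in $L^2_{\rm loc}$ (not uniformly or in $L^1$), so one cannot naively apply continuous mapping. The standard fix is to approximate $f$ by continuous compactly supported kernels, control the error using the uniform $L^2$-bound on $\beta_n\psi^n_{ij}$ together with the Burkholder–Davis–Gundy inequality applied to $\langle M^{(n)}\rangle=\Lambda^{(n)}$ (whose expectation is uniformly bounded), and use \eqref{eqn.212} for the smooth approximants. A secondary technical point is ensuring the joint convergence $(M^{(n)},\Lambda^{(n)},G^n*M^{(n)})$ rather than just marginal convergence, which follows from the functional martingale CLT once tightness of the triple is in hand.
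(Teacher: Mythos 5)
Your proposal follows essentially the same route as the paper: a priori moment bounds from the martingale representation, $C$-tightness of $\Lambda^{(n)}$ by Kolmogorov--Chentsov and of $M^{(n)}$ via its bracket and vanishing jumps, $U\overset{\rm a.s.}{=}X$ from $N^{(n)}-\Lambda^{(n)}=\beta_n M^{(n)}\to 0$, identification of $Z_i=B_i\circ X_i$ through the limiting quadratic covariation, and passage to the limit in the integrated martingale representation. Two small points of execution: for the Kolmogorov--Chentsov bound $\mathbf{E}[|\Lambda^{(n)}(t+h)-\Lambda^{(n)}(t)|^2]\leq Ch^2$ you need the \emph{second}-moment bound $\sup_{t\leq T}\mathbf{E}[|\beta_n^2\lambda^n(t)|^2]\leq C$ (obtained by BDG on the stochastic convolution in \eqref{MartingaleRep.n} together with the uniform $L^2$-bound of Condition~\ref{Main.Condition.01}), not just the first moment you compute; and for your Step 4 the paper avoids kernel approximation entirely by applying the stochastic Fubini theorem to rewrite the convolution as the pathwise integral $\int_0^t\beta_n\psi^n(s)\,M^{(n)}(t-s)\,ds$, then invoking the Skorokhod representation to get a.s.\ uniform convergence $M^{(n_k)}\to Z$ on compacts, so that the error term is bounded by $\|\beta_{n_k}\psi^{n_k}\|_{L^1_T}\cdot\sup_{t\leq T}|M^{(n_k)}(t)-Z(t)|\to 0$ and the main term converges by \eqref{eqn.212} applied to the continuous path $Z(t-\cdot)$.
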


\begin{remark} \label{Remark.01}
Since $f \in L^2_{\rm loc}(\mathbb{R}_+; \mathbb{R}_+^{d \times d})$, an analogue to \cite[Lemma 2.1]{AbiJaber2021} implies that the process $X$ is differentiable, and its derivative $Y$ is a non-negative weak solution to the stochastic Volterra equation
\begin{equation*}
Y(t) = F(t) \cdot a + \int_0^t f(t-s) \, dZ(s), \quad t \geq 0.
\end{equation*}
 Note that $Y$ is continuous and hence also $(\mathscr{F}_t)$-predictable.
 Using \eqref{eqn.LimitSVE}, the preceding equation is equivalent to  
\begin{equation*}
Y(t) = F(t) \cdot a + \int_0^t f(t-s) \sqrt{{\rm diag}(Y(s))} \, dB(s), \quad t \geq 0.
\end{equation*}
It is typical that the regularity properties of $f$ are inherited by $Y$. For more details on the H\"older continuity of $Y$, we refer to Appendix~\ref{Sec.AppendixHCY}.
\end{remark}

\subsection{Characterization of $F$ and comparison with the existing literature}

In this section, we provide a partial characterization for the measure $F$ and its density $f$ provided in satisfying Condition~\ref{Main.Condition.01}. To avoid degeneracy of the model, we always assume the stability condition holds, i.e.,  the spectral radius $\rho(\|\phi^n\|_{L^1}) < 1$.  

We define the Laplace transforms of a function $g\in L^1(\mathbb{R}_+; \mathbb{R}_+ )$ and of a measure $\nu\in  \mathrm{M}(\mathbb{R}_+) $ by
 \beqnn
 \mathcal{L}_g(z):= \int_{\mathbb{R}_+} e^{-zt}g(t)dt
 \quad \mbox{and}\quad 
 \mathcal{L}_\nu(z):= \int_{\mathbb{R}_+} e^{-zt}\nu(dt),\quad z\geq 0.
 \eeqnn
Their Fourier transforms are defined respectively by $\mathcal{F}_g(z): =\mathcal{L}_g(\mathtt{i}z)$ and $\mathcal{F}_\nu(z): =\mathcal{L}_\nu(\mathtt{i}z)$ for $z\in\mathbb{R}$ where $\mathtt{i}$ is the imaginary unit. The Laplace/Fourier transforms of matrix-valued functions and measure-valued matrices are given by computing the Laplace/Fourier transform of each element.

\textbf{\textit{Univariate case: $d = 1$.}} The stability condition reduces to $\|\phi^n\|_{L^1} < 1$. A full characterization of $F(dx)$ can be obtained by considering its Laplace transform and using the theory of Bernstein functions. A function $g$ on $\mathbb{R}_+$ is called a \textsl{Bernstein function} if and only if it can be expressed as
\begin{equation*}
g(z) = b + c z + \int_0^\infty (1 - e^{-zx}) \, \nu(dx), \quad z \geq 0,
\end{equation*}
for some constants $b, c \geq 0$ and a $\sigma$-finite measure $\nu(dx)$ on $(0, \infty)$ such that $\int_0^\infty (1 \wedge x) \, \nu(dx) < \infty$. The triplet $(b, c, \nu)$ is referred to as the \textsl{L\'evy triplet} of $g$. We refer to \cite{SchillingSongVondracek2012} for a comprehensive review of Bernstein functions.

The stability condition ensures that $\|\psi^n\|_{L^1} < \infty$. Taking the Laplace transform on both sides of \eqref{Resolvent} or \eqref{eqn.Resolvent}, we obtain
\begin{equation*}
\mathcal{L}_{\psi^n}(z) = \frac{\mathcal{L}_{\phi^n}(z)}{1 - \mathcal{L}_{\phi^n}(z)} 
\quad \text{and} \quad 
\mathcal{L}_{F^n}(z) = \frac{\beta_n \cdot \mathcal{L}_{\phi^n}(z)}{1 - \mathcal{L}_{\phi^n}(z)}, \quad z \geq 0.
\end{equation*}
The vague convergence of $\mathcal{L}_{F^n}(dt)$ to $F(dt)$ is equivalent to the pointwise convergence of $\mathcal{L}_{F^n}$ to $\mathcal{L}_{F}$, which occurs if and only if, as $n \to \infty$,
\begin{equation} \label{eqnA.1}
\mathcal{L}_{\phi^n}(z) \to 1 
\quad \text{and} \quad 
\Phi^n(z) := \beta_n^{-1} \cdot \big( 1 - \mathcal{L}_{\phi^n}(z) \big) \to \Phi(z) \in [0, \infty], \quad z \geq 0.
\end{equation}
Note that in that case, we can express $\Phi^n(z)$ as
\begin{equation*}
\Phi^n(z) = \beta_n^{-1} \cdot \big( 1 - \|\phi^n\|_{L^1} \big) + \int_0^\infty (1 - e^{-zt}) \beta_n^{-1} \phi^n(t) \, dt,
\end{equation*}
which shows that $\Phi^n$ is a Bernstein function with L\'evy triplet $\left( \beta_n^{-1} \cdot \big( 1 - \|\phi^n\|_{L^1} \big), 0, \beta_n^{-1} \phi^n(t) \, dt \right)$. The following result is a direct consequence of Corollary~3.9 in \cite[p.29]{SchillingSongVondracek2012}.

\begin{lemma}
\label{lem:caracterisationphi}
\begin{enumerate}
    \item[(1)] The limit function $\Phi$ is a Bernstein function, and the limit measure $F$ is uniquely determined by the Laplace transform $\mathcal{L}_F(z) = 1 / \Phi(z)$ for $z \geq 0$.
    
    \item[(2)] For any Bernstein function $\Phi$, there exists a sequence of functions $\{\phi^n\}_{n \geq 1} \subset L^1(\mathbb{R}_+; \mathbb{R}_+)$ and a sequence $\{\beta_n \}_{n \geq 1}$ vanishing asymptotically, such that \eqref{eqnA.1} holds.
    
    \item[(3)] The vague convergence $F^n \to F$ is equivalent to $\mathcal{L}_{\phi^n}(z) = 1 - \beta_n / \mathcal{L}_F(z) + o(\beta_n)$ as $n \to \infty$.
\end{enumerate}
\end{lemma}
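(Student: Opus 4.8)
The whole statement hinges on connecting the vague convergence $F^n \overset{\rm v}{\to} F$ with the pointwise convergence of Laplace transforms and then invoking the characterization of Bernstein functions via Corollary~3.9 in \cite{SchillingSongVondracek2012}. The plan is as follows. First I would recall that, since each $F^n(dt) = \beta_n \psi^n(t)\,dt$ is a (locally finite) measure on $\mathbb{R}_+$ and the stability condition guarantees $\|\psi^n\|_{L^1}<\infty$, the Laplace transform $\mathcal{L}_{F^n}(z)$ is well-defined and finite for all $z>0$, with the explicit formula $\mathcal{L}_{F^n}(z) = \beta_n \mathcal{L}_{\phi^n}(z)/(1-\mathcal{L}_{\phi^n}(z))$ already derived in the text. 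By the standard continuity theorem for Laplace transforms of measures on $\mathbb{R}_+$ (e.g. \cite{Feller1971}), vague convergence $F^n \overset{\rm v}{\to} F$ together with tightness at infinity is equivalent to pointwise convergence $\mathcal{L}_{F^n}(z)\to \mathcal{L}_F(z)$ on $(0,\infty)$; this is exactly condition \eqref{eqnA.1}, which in turn forces $\mathcal{L}_{\phi^n}(z)\to 1$ and $\Phi^n(z)\to\Phi(z)$ for every $z\geq 0$, where $\Phi = 1/\mathcal{L}_F$ on the set where $\mathcal{L}_F>0$.

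For part (1), the key observation is that, as noted just before the lemma, $\Phi^n$ is a Bernstein function for each $n$, being of the form $\Phi^n(z) = \beta_n^{-1}(1-\|\phi^n\|_{L^1}) + \int_0^\infty (1-e^{-zt})\beta_n^{-1}\phi^n(t)\,dt$ with L\'evy triplet $\big(\beta_n^{-1}(1-\|\phi^n\|_{L^1}),\,0,\,\beta_n^{-1}\phi^n(t)\,dt\big)$. Since the class of Bernstein functions is closed under pointwise limits (this is precisely the content of Corollary~3.9 in \cite[p.~29]{SchillingSongVondracek2012}, which also gives convergence of the associated L\'evy triplets in the vague sense), the limit $\Phi$ is itself a Bernstein function, and $\mathcal{L}_F(z) = 1/\Phi(z)$. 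One subtlety is the possibility $\Phi(z)=\infty$ or $\Phi(z)=0$ for some $z$; I would handle the former by noting $\mathcal{L}_F(z)=0$ there (degenerate limit), and rule out $\Phi\equiv 0$ using that $\|\phi^n\|_{L^1}<1$ so $\Phi^n(0)=\beta_n^{-1}(1-\|\phi^n\|_{L^1})\geq 0$ and the positivity of the jump part; the uniqueness of $F$ given its Laplace transform is the injectivity of the Laplace transform on measures.

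For part (2), given an arbitrary Bernstein function $\Phi$ with L\'evy triplet $(b,c,\nu)$, I would construct the pair $(\phi^n,\beta_n)$ explicitly: set $\beta_n = 1/n$ (or any vanishing sequence) and choose $\phi^n$ so that $\beta_n^{-1}(1-\|\phi^n\|_{L^1}) \to b$ and $\beta_n^{-1}\phi^n(t)\,dt \overset{\rm v}{\to} c\,\delta_0(dt) + $ (a density approximating $\nu$) --- concretely, take $\phi^n$ to be a suitably scaled and truncated version of the measure $c\,\delta_0 + \nu$ smoothed at a scale tending to $0$, renormalized so that $\|\phi^n\|_{L^1} = 1 - \beta_n b_n$ with $b_n\to b$. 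Then \eqref{eqnA.1} holds by the convergence-of-L\'evy-triplets direction of Corollary~3.9, running the argument of part (1) in reverse. The main technical nuisance here is accommodating the drift term $cz$ (which requires mass of $\phi^n$ concentrating near $0$ at the right rate) simultaneously with the jump measure $\nu$ possibly having infinite mass near $0$; a careful truncation $\nu_n := \nu|_{(1/n,\,n)}$ plus an atom-approximating bump of total mass $cn$ on $(0,1/n)$ does the job, and this is the step I expect to demand the most care.

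Finally, part (3) is essentially a restatement: from \eqref{eqnA.1}, $\Phi^n(z)=\beta_n^{-1}(1-\mathcal{L}_{\phi^n}(z))\to\Phi(z)=1/\mathcal{L}_F(z)$, which rearranges to $1-\mathcal{L}_{\phi^n}(z) = \beta_n/\mathcal{L}_F(z) + o(\beta_n)$, i.e. $\mathcal{L}_{\phi^n}(z) = 1 - \beta_n/\mathcal{L}_F(z) + o(\beta_n)$; conversely this asymptotic expansion immediately yields both convergences in \eqref{eqnA.1}, hence $F^n\overset{\rm v}{\to} F$ by the continuity theorem. I would also remark that the $o(\beta_n)$ here is understood pointwise in $z$ (locally uniformly on $(0,\infty)$, by the general uniformity of pointwise convergence of Laplace transforms of bounded families), so no uniform-in-$z$ strengthening is needed.
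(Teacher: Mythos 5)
Your proposal is correct and follows essentially the same route as the paper, which gives no argument beyond declaring the lemma ``a direct consequence of Corollary~3.9'' of Schilling--Song--Vondra\v{c}ek: you use that $\Phi^n$ is Bernstein with the stated L\'evy triplet, closure of Bernstein functions under pointwise limits, the extended continuity theorem for Laplace transforms, and an explicit truncation/smoothing construction for part (2). The only quibbles are cosmetic (e.g.\ the normalization of the bump producing the drift $cz$ is off by a constant factor, and the choice $\beta_n=1/n$ must be adjusted when $c>0$ or $\nu$ is infinite so that $\|\phi^n\|_{L^1}<1$), and you rightly flag part (2) as the step needing the most care.
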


The limit measure $F$ corresponds to the resolvent measure of a killed subordinator with Laplace exponent $\Phi$ (see \cite{Bertoin1996}). By the Fourier isometry, the uniform $L^2$-upper bound in Condition~\ref{Main.Condition.01} is equivalent to $\sup_{n \geq 1} \|\mathcal{F}_{F^n}\|_{L^2} < \infty$. To the best of our knowledge, a necessary and sufficient condition on $\{\phi^n\}$ for this bound is still unknown. However, in dimension $d = 1$, we have by Young's convolution inequality
\beqnn
\big\|\psi^n\big\|_{L^2_T} \leq 
\big\|\phi^n\big\|_{L^2_T} \sum_{k=0}^{\infty} \big\|(\phi^n)^{*k}\big\|_{L^1_T} ,
\eeqnn
which implies that 
\beqnn
\big\|\psi^n\big\|_{L^2_T} \leq \frac{\big\|\phi^n\big\|_{L^2_T}}{1-\big\|\phi^n\big\|_{L^1_T}}.
\eeqnn
In particular, provided the second limit in \eqref{eqnA.1}, the $L^2$-condition is verified whenever $\big\|\phi^n\big\|_{L^2_T}$ is bounded independently of $n$.

It is worth noting that all Hawkes processes considered in \cite{HorstXuZhang2023a, JaissonRosenbaum2015, JaissonRosenbaum2016, Xu2021} can be seen as special cases of Lemma~\ref{lem:caracterisationphi}.
 We illustrate this by specifically examining the cases in \cite{JaissonRosenbaum2015} and \cite{JaissonRosenbaum2016}. In these papers, the authors take
\beqnn
\phi^n(t) = a_n b_n \cdot \phi(b_n t),
\eeqnn
where $(a_n)_n$ and $(b_n)_n$ are positive sequences such that
\beqnn
a_n \to 1, \quad b_n \to \infty \quad \text{and} \quad (1 - a_n) b_n \to c,
\eeqnn
for some $c > 0$, and where $\phi$ is a continuous bounded function such that $\|\phi\|_{L^1} = 1$. 
In that case, we have
\begin{equation*}
\psi^n(t) = b_n \sum_{k=1}^\infty a_n^{k} \phi^{*k}(b_n t), \quad t \geq 0.
\end{equation*}
 This implies, in particular, that $\|\psi^n\|_{L^1} = (1 - a_n)^{-1}$.
%
%
%
%
%
Taking $\beta_n = 1 - a_n \to 0$, \cite{JaissonRosenbaum2015} and \cite{JaissonRosenbaum2016} show that the limit of the measure $F^n(dt) = \beta_n \psi^n(t) \, dt$ depends on the tail behavior of $\phi$. More precisely, when
\begin{equation*}
\int_0^\infty s \phi(s) \, ds = \int_0^{\infty} \left( 1 - \int_0^t \phi(s) \, ds \right) dt < \infty,
\end{equation*}
the weak limit of $F^n(dt)$ is of the form
\begin{equation*}
\mathcal{L}_{F}(z) = \frac{1}{m +  \lambda\cdot z}
\end{equation*}
for some constants $m, \lambda > 0$. When
\begin{equation*}
1 - \int_0^t \phi(s) \, ds \sim C t^{-\alpha}
\end{equation*}
for some $1/2 < \alpha < 1$, we instead have
\begin{equation*}
\mathcal{L}_{F}(z) = \frac{1}{m + \lambda \cdot z^{\alpha}}
\end{equation*}
for some constants $m, \lambda > 0$.

 \textbf{\textit{Multivariate case: $d>1$.}}  
 In contrast to the univariate case, the characterization of $F$ with $d>1$ is much more intricate because the Fourier transforms involve matrices. 
 Here we just characterize the measure $F$ in several tractable settings. 
 First, we show that the Fourier transform of $\phi^{n}$ is always invertible under the stability condition. For $x\in \mathbb{R}^d$, let $\|x\|_{l^1}=\sum_{i=1}^d |x_i|$.
 For a matrix $A \in \mathbb{R}^{d\times d}$, let ${\rm Adj}(A)$ be its adjacent matrix, ${\rm Tr}(A)$ its trace and 
 \beqnn
 \opnorm{A}:=  \max_{1 \leq j \leq d } \sum_{i=1}^{d}| A_{i,j} | = \sup_{\|x\|_{l^1} = 1} \|Ax\|_{l^1}
 \eeqnn

 \begin{proposition} 
 For any  $g\in L^1(\mathbb{R}_+;\mathbb{R}^{d\times d})$, then $\rho \big( \mathcal{F}_g  (z) \big) \leq \rho(\|g\|_{L^2})$ for any $z\in\mathbb{R}$. 
 \end{proposition}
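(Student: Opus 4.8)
The plan is to bound the complex matrix $\mathcal{F}_g(z)$ \emph{entrywise}, uniformly in $z$, by the fixed non-negative matrix $\|g\|_{L^1}:=\big(\|g_{ij}\|_{L^1}\big)_{1\le i,j\le d}$, and then to transfer this domination to the level of spectral radii by a Perron--Frobenius-type monotonicity argument. The pointwise step is immediate: for every $z\in\mathbb{R}$ and all $1\le i,j\le d$,
\[
\big|(\mathcal{F}_g(z))_{ij}\big| = \Big|\int_0^\infty e^{-\mathtt{i}zt}\,g_{ij}(t)\,dt\Big| \le \int_0^\infty |g_{ij}(t)|\,dt = \big(\|g\|_{L^1}\big)_{ij},
\]
so, writing $|A|:=(|A_{ij}|)_{i,j}$ for the entrywise modulus of $A\in\mathbb{C}^{d\times d}$, one has $|\mathcal{F}_g(z)|\le\|g\|_{L^1}$ entrywise, with a majorant that does not depend on $z$.

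It then remains to prove the general monotonicity statement: if $A\in\mathbb{C}^{d\times d}$ and $B\in\mathbb{R}_+^{d\times d}$ satisfy $|A|\le B$ entrywise, then $\rho(A)\le\rho(B)$; applying this with $A=\mathcal{F}_g(z)$ and $B=\|g\|_{L^1}$ gives the proposition. I would argue via eigenvectors: if $\lambda$ is an eigenvalue of $A$ with eigenvector $v\neq 0$, then taking moduli componentwise in $Av=\lambda v$ and using $|A|\,|v|\le B\,|v|$ entrywise yields
\[
|\lambda|\cdot|v| \le B\,|v|, \qquad |v|:=(|v_i|)_i \ge 0,\quad |v|\neq 0 .
\]
Iterating, $|\lambda|^k|v|\le B^k|v|$ for all $k\ge 1$; fixing a coordinate $i_0$ with $v_{i_0}\neq 0$ and bounding every entry of $B^k$ by $\opnorm{B^k}$ (the operator norm induced by $\|\cdot\|_{\ell^1}$, equal to the maximal absolute column sum by the identity displayed just before the proposition) gives $|\lambda|^k\,|v_{i_0}| \le \opnorm{B^k}\,\|v\|_{\ell^1}$. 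Taking $k$-th roots and letting $k\to\infty$, Gelfand's formula $\rho(B)=\lim_{k}\opnorm{B^k}^{1/k}$ yields $|\lambda|\le\rho(B)$; since $\lambda$ was an arbitrary eigenvalue, $\rho(A)\le\rho(B)$.

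I do not expect any genuine obstacle here. The only point deserving care is the monotonicity step: one cannot conclude eigenvalue-by-eigenvalue that an entrywise bound $|A|\le B$ forces a domination of the spectra without invoking the Perron--Frobenius/Collatz--Wielandt mechanism above (equivalently, without routing the comparison through the monotone submultiplicative norm $\opnorm{\cdot}$ and Gelfand's formula). Uniformity in $z$ is automatic because $\|g\|_{L^1}$ does not depend on $z$; and the bound is attained at $z=0$ when $g$ has non-negative entries, since then $\mathcal{F}_g(0)=\int_0^\infty g(t)\,dt=\|g\|_{L^1}$.
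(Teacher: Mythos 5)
Your proof is correct and follows essentially the same route as the paper's: the entrywise bound $|\mathcal{F}_g(z)|\le\|g\|_{L^1}$ combined with Gelfand's formula for the norm $\opnorm{\cdot}$, the only cosmetic difference being that you route the comparison through an eigenvector of $\mathcal{F}_g(z)$ while the paper bounds $\opnorm{(\mathcal{F}_g(z))^p}$ directly via $|(\mathcal{F}_g(z))^p|\le|\mathcal{F}_g(z)|^p\le(\|g\|_{L^1})^p$. Note that the $\rho(\|g\|_{L^2})$ in the statement is evidently a typo for $\rho(\|g\|_{L^1})$, which is what both your argument and the paper's actually establish.
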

 \proof 
 Recall that $ \big| \big(\mathcal{F}_g  (z)\big)^p  \big| \leq  \big| \mathcal{F}_g (z) \big|^p $  and $\big| \mathcal{F}_g  (z) \big| \leq \big| \mathcal{F}_g (0) \big|\leq \| g \|_{L^1}$ component-wise for any $z\in\mathbb{R}$ and $p>0$. 
 Thus using the fact that $\opnorm{\cdot}$ is an operator norm, we have
 \beqnn
 \rho \big( \mathcal{F}_g  (z) \big) 
  = \lim_{p \to \infty} \opnorm{\big(\mathcal{F}_g  (z)\big)^p}^{1/p} 
  \leq \limsup_{p \to \infty} \opnorm{\big(\mathcal{F}_g  (0)\big)^p}_1^{1/p} 
  \leq \rho(\|g\|_{L^2}) .
 \eeqnn
 \qed
 
 Under the stability condition, the preceding proposition ensures the invertibility of the matrix $\mathbf{I}-\mathcal{F}_{\phi^n}(z)$ for any $z\in\mathbb{R}$ and $n\geq 1$.  
 Similarly to the univariate case, taking Fourier transforms on both sides of \eqref{Resolvent} or \eqref{eqn.Resolvent} we also have
 \beqlb\label{eqn.A03}
 \mathcal{F}_{\psi^n}(z)= \big( \mathbf{I}-\mathcal{F}_{\phi^n}(z) \big)^{-1}\cdot\mathcal{F}_{\phi^n}(z) 
 \quad \mbox{and}\quad
 \mathcal{F}_{F^n}(z)= \beta_n\big( \mathbf{I}-\mathcal{F}_{\phi^n}(z) \big)^{-1}\cdot\mathcal{F}_{\phi^n}(z),\quad t\geq 0.
 \eeqlb
 
We now present two settings in which we can explicitly compute the limiting measure $F$. Unlike the univariate case, we do not claim that these are the only two regimes satisfying Condition~\ref{Main.Condition.01}. A more comprehensive study is beyond the scope of this paper.  
 
 {\bf Setting~I.}  For $n\geq 1$, assume that 
 $ \mathcal{F}_{\phi^n}(z) = \mathbf{I} -\beta_n\cdot B( z )  + o ( \beta_n ) $
  for some $B \in \mathrm{C}(\mathbb{R};\mathbb{R}^{d\times d})$ with $B(z)$ being invertible for all $z \in\mathbb{R}$. 
  Plugging this back into \eqref{eqn.A03} and letting $n\to\infty$, we get
  \begin{align*}
 \mathcal{F}_{F^n}(z) =\big(B( z )  + o ( 1)\big)^{-1} \cdot \mathcal{F}_{\phi^n}(z) \to \big(B (z )\big) ^{-1} ,\quad z\in\mathbb{R}. 
  \end{align*}
Using L\'evy's continuity theorem, we conclude that the limit function is the Fourier transform of a non-negative measure $F$, and $F^n$ converges narrowly to $F$.
  
  {\bf Setting~II.} For $n\geq 1$, suppose that $ \mathcal{F}_{\phi^n}(z) = A(z) -\beta_n\cdot B( z )  + o ( \beta_n ) $ 
  where $A,B \in \mathrm{C}(\mathbb{R};\mathbb{R}^{d\times d})$ such that  $A(z) \neq \mathbf{I}$ for all $z \in \mathbb{R}$ and $A(z) -  \mathbf{I}$ is non-invertible.
  Suppose also that the function $z \mapsto {\rm Tr} ( {\rm Adj} ( \mathbf{I}-A(z) ) B (z ) )$ never vanishes.
   Plugging this back into \eqref{eqn.A03} and letting $n\to\infty$, we get
  \begin{align*}
  	\mathcal{F}_{F^n}(z) \to\frac{{\rm Adj}  ( \mathbf{I} - A(z) )}
   	{{\rm Tr}\big({\rm Adj} ( \mathbf{I} - A(z) ) B (z) \big)},\quad z\in\mathbb{R}. 
  \end{align*}
Using L\'evy's continuity theorem, we deduce that the measures $F^n$ converge narrowly to a non-negative measure $F$, whose Fourier transform is given by the above limit function.

Again, this aligns with the existing literature. \cite{ElEuchFukasawaRosenbaum2018} takes $\Phi^n(t)$ to be diagonalizable in a basis independent of $n$ and $t$, while \cite{RosenbaumTomas2021} studies dynamics similar to this, with $\Phi^n(t)$ being trigonalizable, still in a basis independent of $n$ and $t$. The limiting processes in both works slightly differ from those obtained in Theorem~\ref{Thm.MainResultSL.01}, as they focus on price processes obtained as linear transformations of the process $N^{(n)}$.

\section{Mean-field limits }

 \label{Sec.MainResultMF} 
 \setcounter{equation}{0}
  
In this section, we provide a propagation of chaos result for high-dimensional Hawkes processes with mean-field interactions, following the framework of \cite{DelattreFournier2016}. More precisely, for each $n \geq 1$, we assume that $N^n$ is an $n$-variate Hawkes process with parameters
 \beqnn
 \mu_i^n= \frac{\mu_0^n}{n}
 \quad \mbox{and}\quad 
  \phi^n_{ij}(t)= \frac{\varphi^n(t)}{n},\quad t\geq 0, \, 1\leq i,j\leq n,
 \eeqnn
for some constant $\mu_0^n \geq 0$ and function $\varphi^n \in L^1_{\rm loc}(\mathbb{R}_+; \mathbb{R}_+)$. The intensity $\lambda^n = (\lambda^n_1, \dots, \lambda^n_n)$ of $N^n$ is given by
 \beqlb
 	\lambda^n_i(t) = \lambda^n_0(t):=  \frac{\mu_0^n}{n}  +  \frac{1}{n}\sum_{j=1}^n \int_0^t \varphi^n(t-s)\, dN^n_j(s), \quad t\geq 0, \, 1\leq i\leq n.
 \eeqlb

 The point process $N^n$ can be seen as an interacting particle system with indistinguishable particles, in the sense that $(N^n_1, \cdots, N^n_n) \overset{\rm d}{=} (N^n_{\sigma(1)}, \cdots, N^n_{\sigma(n)})$ for any permutation $\sigma$ of $\{1, \dots, n\}$. The function $\varphi^n$ models the mutual interaction among particles.
 As before, we define the compensator and compensated point process associated with $N^n_i$ as follows
 \beqlb\label{eqn.210}
 \Lambda^n_i(t) = \Lambda^n_0(t) := \big\|\lambda^n_0\big\|_{L^1_t}
 \quad \mbox{and}\quad 
 M^n_i(t) := N^n_i(t) - \Lambda^n_0(t),\quad t\geq 0, \, 1\leq i\leq n. 
 \eeqlb 
In this section, we are interested in the long-run behavior of two empirical measures derived from the particle system, defined as
  \beqnn
  P^{(n)}_N(t,dx):= \frac{1}{n}\sum_{i=1}^n \delta_{N^{(n)}_i(t)}(dx) 
  \quad \mbox{and}\quad 
  P^{(n)}_M(t,dx):= \frac{1}{n}\sum_{i=1}^n \delta_{M^{(n)}_i(t)}(dx) ,\quad t\geq 0,
  \eeqnn
which are two c\`{a}dl\`{a}g measure-valued processes, where $\delta_x$ denotes the Dirac measure at point $x$. As we will see, the behavior of $P^{(n)}_N$ and $P^{(n)}_M$ is closely related to the long-term behavior of the \textsl{total size} of the particle system, defined by
 \beqlb\label{eqn.209}
 \overline{N^n}(t) := \sum_{i=1}^n N^n_i(t) ,\quad t\geq 0.
 \eeqlb 
We can identify $\overline{N^n}$ as a univariate Hawkes process with background rate $\mu_0^n$ and kernel $\varphi^n$. For convenience, we continue to denote by $\psi^n$ the resolvent of the second kind associated with $\varphi^n$, i.e., \beqlb\label{eqn.201}
 \psi^n(t) =  \sum_{k\geq 1} (\varphi^n)^{*k}(t) 
 \quad \mbox{and} \quad
 \psi^n(t) =\varphi^n(t) + \varphi^n*\psi^n(t),
 \quad t\geq 0.
 \eeqlb
Note that the intensity $\overline{\lambda^n}(t)$ of $\overline{N^n}$, the compensator $\overline{\Lambda^n}$ of $\overline{N^n}$, and the compensated martingale $\overline{M^n}(t)$ are all related to the underlying particle system, and we have for $t \geq 0$,
 \beqlb
 \overline{\lambda^n}(t) \ar:=\ar \sum_{i=1}^n \lambda^n_i(t) = n\cdot \lambda^n_0(t), \label{eqn.211.a}\\
 \overline{\Lambda^n} (t)\ar:=\ar \int_0^t \overline{\lambda^n}(s)ds  =n\cdot \Lambda^n_0(t), \label{eqn.211.b}\\
 \overline{M^n}(t) \ar:=\ar \overline{N}^n(t) - \overline{\Lambda^n}(t)= \sum_{i=1}^n  M^n_i(t) .\label{eqn.211.c}
 \eeqlb 
 
Following the methodology in Section~\ref{Sec.MainResultSL}, we consider a sequence of scaling parameters $\{\beta_n\}_{n \geq 1}$ satisfying Condition~\ref{Main.Condition.01} and define the spatially-scaled processes as
 \beqlb
 \overline{\Lambda^{(n)}}(t):=\beta_n^2\cdot \overline{\Lambda^n}(t),\quad 
 \overline{N^{(n)}}(t):=\beta_n^2\cdot \overline{N^n}(t),\quad 
 \overline{M^{(n)}}(t):= \beta_n \cdot  \overline{M^n}(t).
 \eeqlb
We also rescale each particle as follows
\beqlb
 \Lambda^{(n)}_i(t):= n \beta_n^2\cdot  \Lambda^n_i(t),\quad 
 N^{(n)}_i(t):=n \beta_n^2\cdot  N^n_i(t),\quad 
 M^{(n)}_i(t):=\sqrt{n} \beta_n \cdot  M^n_i(t), 
 \quad 1\leq i\leq n .
 \eeqlb  
 
We now state a tightness result for the finite-dimensional distributions associated with this mean-field system, which can be seen as an analogue of Theorem~\ref{Thm.MainResultSL.01} in this framework.

 \begin{theorem} \label{Thm.MainResultSL.03}
Assume that Condition~\ref{Main.Condition.01} with $d = 1$ holds and $\beta_n \cdot \mu_0^n \to a > 0$. For each $K \geq 1$, the sequence $\big\{ \big(\overline{\Lambda^{(n)}}, \overline{N^{(n)}}, \overline{M^{(n)}}, N^{(n)}_1, M^{(n)}_1, \cdots, N^{(n)}_K, M^{(n)}_K \big) \big\}_{n \geq 1}$ is tight. Moreover, suppose that $(\overline{U}, \overline{X}, \overline{Z}, X_1, Z_1, \cdots, X_K, Z_K )$ is an accumulation point along a subsequence $\{ n_k \}_{k \geq 1}$. Then Theorem~\ref{Thm.MainResultSL.01}(2) with $d = 1$ holds for $(\overline{U}, \overline{X}, \overline{Z})$. Denoting $\zeta = \lim\limits_{k \to \infty} n_k \beta_{n_k}^2 \in [0, \infty]$, we also have the following:
 	\begin{enumerate}
 		\item[(1)] If $\zeta = 0$, there exist mutually independent Brownian motions $W_1, \cdots, W_K$ that are independent of $(\overline{X}, \overline{Z}, B)$ such that  
 		\beqlb\label{eqn.216b}
 		X_i  =  \overline{X} 
 		\quad \mbox{and}\quad 	Z_i  \ar=\ar  W_i\circ\overline{X},\quad 1\leq i\leq K.
 		\eeqlb
 		
 		\item[(2)] If $\zeta \in (0,\infty)$, there exist 
 		mutually independent Poisson processes $N^\circ_1,\cdots,N^\circ_K$ with rate $1$ that are independent of $(\overline{X},\overline{Z},B)$ such that 
 		\beqlb \label{eqn.216c}
 		X_i  = \zeta\cdot N^\circ_i\circ \frac{\overline{X}}{\zeta}  
 		\quad \mbox{and}\quad 	Z_i \ar=\ar \sqrt{\zeta}\cdot \widetilde{N}^\circ_i \circ \frac{\overline{X}}{\zeta} ,\quad 1\leq i\leq K,
 		\eeqlb
 		where $\widetilde{N}^\circ_i(t):= N^\circ_i(t)-t$ is the compensated point process of $ N^\circ_i$. 
 		
 		\item[(2)] If $\zeta =\infty$, then $X_i\overset{\rm a.s.}= 0$ and  $Z_i\overset{\rm a.s.}= 0$ for $1\leq i\leq K$. 
 	\end{enumerate}
 \end{theorem}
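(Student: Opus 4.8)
The idea is to derive everything from Theorem~\ref{Thm.MainResultSL.01} applied to the aggregate process, together with the elementary observation that, conditionally on the aggregate, the $n$ particles split each of its jumps uniformly at random. By \eqref{eqn.209}--\eqref{eqn.211.c}, $\overline{N^n}$ is a univariate Hawkes process with background rate $\mu_0^n$ and kernel $\varphi^n$, whose resolvent of the second kind is exactly the $\psi^n$ of Condition~\ref{Main.Condition.01}; since $\beta_n \mu_0^n \to a$, Theorem~\ref{Thm.MainResultSL.01} applies verbatim to $(\overline{\Lambda^{(n)}}, \overline{N^{(n)}}, \overline{M^{(n)}})$, giving its $C$-tightness and, for every accumulation point $(\overline U, \overline X, \overline Z)$, the structure in part~(2). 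For the particle coordinates, note the identities $\Lambda^{(n)}_i = n\beta_n^2 \Lambda^n_0 = \overline{\Lambda^{(n)}}$, whence $\langle M^{(n)}_i \rangle = \overline{\Lambda^{(n)}}$ and $\langle M^{(n)}_i, M^{(n)}_j \rangle = 0$ for $i \neq j$ (distinct $N^n_i$ have no common jumps), together with
\beqnn
N^{(n)}_i(t) - \overline{\Lambda^{(n)}}(t) = \sqrt{n}\, \beta_n \cdot M^{(n)}_i(t), \quad t \geq 0.
\eeqnn
Tightness of each $\{N^{(n)}_i\}_n$ and $\{M^{(n)}_i\}_n$ follows by extracting, from an arbitrary subsequence, a further one along which $n_k \beta_{n_k}^2 \to \zeta \in [0,\infty]$: if $\zeta < \infty$ the jumps of $M^{(n)}_i$ (of size $\sqrt n\beta_n$) are eventually bounded while its bracket $\overline{\Lambda^{(n)}}$ is $C$-tight, so $\{M^{(n)}_i\}$ is tight by the standard criterion \cite[Ch.~VI]{JacodShiryaev2003}, and then $N^{(n)}_i = \overline{\Lambda^{(n)}} + \sqrt n\beta_n M^{(n)}_i$ is tight as well; if $\zeta = \infty$, both converge to $0$ (shown below). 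This proves the tightness claim.

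It remains to identify the limits along a convergent subsequence with $n_k \beta_{n_k}^2 \to \zeta$. The key point is that, conditionally on $\overline{N^n}$ (equivalently on its jump times $T^n_1 < T^n_2 < \cdots$), one has $N^n_i(t) = \sum_{k \geq 1} \mathbf{1}_{\{T^n_k \leq t\}}\mathbf{1}_{\{\xi^n_k = i\}}$ for i.i.d.\ marks $(\xi^n_k)_k$ uniform on $\{1, \dots, n\}$ and independent of $\overline{N^n}$; since the compensator $\overline{\Lambda^{(n)}}$ is itself a deterministic functional of $\overline{N^n}$, conditioning on $\overline{N^n}$ reduces $(N^{(n)}_i, M^{(n)}_i)_{i \leq K}$ to i.i.d.\ copies of a single elementary object, and the three regimes become the three scaling regimes of a rescaled (inhomogeneous) binomial/Poisson process with compensator $\overline{\Lambda^{(n)}} \to \overline X$ and jump size $n\beta_n^2 \to \zeta$. (Alternatively, as indicated in the introduction, one couples $N^n$ with the system in which particles $1, \dots, K$ do not feed back into the common intensity -- so that they are conditionally independent inhomogeneous Poisson processes given the remaining coordinates -- and checks that this modification is asymptotically negligible after rescaling.)

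Carrying this out: if $\zeta = 0$, the displayed identity gives $N^{(n)}_i = \overline{\Lambda^{(n)}} + \sqrt n\beta_n M^{(n)}_i$ with $\sqrt n\beta_n \to 0$ and $M^{(n)}_i$ tight, so $N^{(n)}_i \to \overline X$; and the conditional functional martingale central limit theorem, applied to the $C$-tight $\mathbb{R}^{K+1}$-valued martingale $(\overline{M^{(n)}}, M^{(n)}_1, \dots, M^{(n)}_K)$ whose predictable covariation converges to $\overline X \cdot \mathbf{I}_{K+1}$ (using $\langle M^{(n)}_i, M^{(n)}_j\rangle = 0$ and $\langle \overline{M^{(n)}}, M^{(n)}_i \rangle = \overline{\Lambda^{(n)}}/\sqrt n \to 0$) and whose jumps vanish, yields \eqref{eqn.216b} with $W_1, \dots, W_K$ independent standard Brownian motions, independent of the conditioning and hence of $(\overline X, \overline Z, B)$. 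If $\zeta \in (0,\infty)$, the conditional uniform colouring of $\overline{N^n}$ with cell probabilities $1/n$ converges to independent Poisson colourings run along the clock $\overline X/\zeta$ (multinomial-to-independent-Poisson), so $N^{(n)}_i \to \zeta N^\circ_i \circ (\overline X/\zeta)$ with the $N^\circ_i$ independent rate-$1$ Poisson processes independent of $(\overline X, \overline Z, B)$; combined with $M^{(n)}_i = (N^{(n)}_i - \overline{\Lambda^{(n)}})/(\sqrt n\beta_n)$, $\overline{\Lambda^{(n)}} \to \overline X$, and $\sqrt n\beta_n \to \sqrt\zeta$, this gives \eqref{eqn.216c}. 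If $\zeta = \infty$, a direct computation of the first moments of the subcritical Hawkes process $\overline{N^n}$ (as in the proof of Theorem~\ref{Thm.MainResultSL.01}) shows $\mathbf{E}\big[\overline{\Lambda^{(n)}}(T)\big]$ stays bounded, whence $\mathbf{E}[\Lambda^n_0(T)] = \mathbf{E}\big[\overline{\Lambda^{(n)}}(T)\big]/(n\beta_n^2) \to 0$ and $\mathbf{P}(N^n_i \not\equiv 0 \text{ on } [0,T]) \leq \mathbf{E}[\Lambda^n_0(T)] \to 0$; on the complementary event $M^{(n)}_i(t) = -\overline{\Lambda^{(n)}}(t)/(\sqrt n\beta_n)$ for $t \leq T$, and since $\overline{\Lambda^{(n)}}(T)$ is tight with $\sqrt n\beta_n \to \infty$, both $N^{(n)}_i$ and $M^{(n)}_i$ converge to $0$, which is the third case.

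The main obstacle is the $\zeta = 0$ regime, specifically the independence assertion that $W_1, \dots, W_K$ are mutually independent and independent of the mean-field noise $B$ (equivalently of the entire aggregate limit, since $\overline X$ is a measurable functional of $B$ through the Volterra equation of Theorem~\ref{Thm.MainResultSL.01}). This is where the colouring/coupling step does the real work: one must verify both that passing to the conditionally i.i.d.\ structure (via conditioning on $\overline{N^n}$, or via the coupling) is legitimate and asymptotically exact after rescaling, and that a conditional functional martingale central limit theorem may be invoked given the $\sigma$-field of the remaining coordinates, with due care to the nesting of filtrations so that the limiting Brownian motions are genuinely independent of that $\sigma$-field. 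The orthogonality $\langle M^n_i, M^n_j \rangle = 0$ and the asymptotic vanishing of $\langle \overline{M^{(n)}}, M^{(n)}_i \rangle$ are precisely what make the limiting covariation matrix diagonal, delivering both the mutual independence and the independence from $B$.
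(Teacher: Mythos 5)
Your proposal is correct, and for the tightness claim, the treatment of the aggregate, and the regimes $\zeta=0$ and $\zeta=\infty$ it follows essentially the paper's own route: Theorem~\ref{Thm.MainResultSL.01} applied to $\overline{N^n}$, the identity $N^{(n)}_i-\overline{\Lambda^{(n)}}=\sqrt{n}\,\beta_n\, M^{(n)}_i$, the covariation computations $\langle M^{(n)}_i,M^{(n)}_j\rangle=\mathbf{1}_{\{i=j\}}\overline{\Lambda^{(n)}}$ and $\langle\overline{M^{(n)}},M^{(n)}_i\rangle=\overline{\Lambda^{(n)}}/\sqrt{n}\to 0$ combined with the martingale representation theorem, and the first-moment/Chebyshev bound when $\zeta=\infty$. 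Where you genuinely diverge is the intermediate regime $\zeta\in(0,\infty)$: you use the exact conditional representation of the system as a uniform colouring of the jumps of $\overline{N^n}$ by i.i.d.\ marks independent of $\overline{N^n}$, followed by a functional multinomial-to-Poisson limit along the clock $\overline{X}/\zeta$. The paper instead builds, from the Poisson-measure representation, an auxiliary system in which particles $1,\dots,K$ do not feed back into the common intensity (your parenthetical alternative), proves a pathwise comparison (Proposition~\ref{Prop.401}) and $L^1$/$L^2$ closeness of the two systems after rescaling (Propositions~\ref{Prop.403} and \ref{Prop.404}), and then reads the limit off the resulting Cox-process structure (Corollary~\ref{Coro.411}). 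Your colouring route is more elementary and makes the independence of the $N^\circ_i$ from $(\overline{X},\overline{Z},B)$ transparent already at the prelimit level, since the marks are exactly, not merely asymptotically, independent of $\overline{N^n}$; but the step you compress into ``multinomial-to-independent-Poisson'' is precisely where the work sits: you still owe a proof of the joint functional convergence of the $K$ rescaled colouring partial-sum processes composed with $\overline{N^{(n)}}$, which is the counterpart of the paper's Propositions~\ref{Prop.401}--\ref{Prop.404}. That lemma is standard but not one line, so while your architecture is sound and arguably cleaner, it is not yet a complete substitute for the paper's coupling argument as written.
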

  
The following result, proved in Section~\ref{Sec.Proof03}, describes the asymptotic behavior of $P^{(n)}_N$ and $P^{(n)}_M$.

 \begin{theorem}\label{Main.Thm03}
Under the assumptions of Theorem~\ref{Thm.MainResultSL.03}, we have 
\beqlb\label{eqn.213}
 (P^{(n_k)}_N, P^{(n_k)}_M)\to (P_X,P_Z),
 \eeqlb
weakly in $\mathrm{D}(\mathbb{R}_+; \mathrm{M}(\mathbb{R}_+) \times \mathrm{M}(\mathbb{R}))$ as $k \to \infty$, where the limit process $(P_X, P_Z)$ is given as follows:
 	\begin{enumerate}
 		\item[(1)] If $\zeta=0$, then $(P_X,P_Z)= (\delta_{\overline{X}},\mathcal{N}\circ\overline{X})$,  where $\mathcal{N} \in \mathrm{C}(\mathbb{R}_+;\mathrm{M}(\mathbb{R}))$  such that $\mathcal{N}(x)$ is a Gaussian distribution with mean $0$ and variance $x$.
 		
 		\item[(2)] If $\zeta\in(0,\infty)$, then $(P_X,P_Z)= \mathcal{P}_\zeta \circ \frac{\overline{X}}{\zeta}$, where $\mathcal{P}_\zeta  \in \mathrm{C}(\mathbb{R}_+;\mathrm{M}(\mathbb{R}_+\times \mathbb{R}))$ with  $\mathcal{P}_\zeta(x)$ being the joint distribution law of $\big(\zeta \cdot N_1^\circ(x),\sqrt{\zeta} \cdot \widetilde{N}_1^\circ(x)\big)$.

 		\item[(3)] If $\zeta=\infty$, then $(P_X,P_Z)= \delta_{(0,0)}$ is the Dirac measure at point $(0,0)$.  
 	\end{enumerate}
 	
\end{theorem}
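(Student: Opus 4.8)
The plan is to deduce the convergence of the empirical measures $(P^{(n_k)}_N, P^{(n_k)}_M)$ directly from the finite-dimensional convergence established in Theorem~\ref{Thm.MainResultSL.03}, using a general weak-convergence criterion for empirical measures of exchangeable sequences (the one alluded to in the introduction, e.g.\ in the spirit of Sznitman's characterization: an exchangeable system is $\overline{X}$-chaotic in the sense of empirical measures iff the pair $(X_1^{(n)}, X_2^{(n)})$ converges in law to two conditionally independent copies given the directing measure). First I would fix the index $K$ and recall from Theorem~\ref{Thm.MainResultSL.03} that for every $K$ the joint law of $(\overline{\Lambda^{(n)}}, \overline{N^{(n)}}, \overline{M^{(n)}}, (N^{(n)}_i, M^{(n)}_i)_{1\le i\le K})$ is tight, and any subsequential limit has the explicit product-type structure \eqref{eqn.216b}--\eqref{eqn.216c} in which, conditionally on $(\overline{X}, \overline{Z}, B)$, the pairs $(X_i, Z_i)$ are i.i.d.\ — each being a time-change of an independent Brownian motion (case $\zeta=0$), of an independent rate-$1$ Poisson process and its compensated version (case $\zeta\in(0,\infty)$), or identically zero (case $\zeta=\infty$) — with common conditional law $\mathcal N\circ\overline X$, $\mathcal P_\zeta\circ(\overline X/\zeta)$, or $\delta_{(0,0)}$ respectively.

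Next I would set up the problem in $\mathrm{D}(\mathbb{R}_+;\mathrm M(\mathbb{R}_+)\times\mathrm M(\mathbb{R}))$. For tightness of $\{(P^{(n_k)}_N,P^{(n_k)}_M)\}$ as measure-valued càdlàg processes, I would test against a countable convergence-determining family of bounded continuous functions $g$ on $\mathbb{R}_+\times\mathbb{R}$: the real-valued processes $t\mapsto \langle (P^{(n)}_N(t),P^{(n)}_M(t)), g\rangle = \frac1n\sum_{i=1}^n g(N^{(n)}_i(t),M^{(n)}_i(t))$ are averages of exchangeable càdlàg processes whose one-dimensional marginals (indeed finite-dimensional marginals) are controlled by Theorem~\ref{Thm.MainResultSL.03}; their tightness in $\mathrm D(\mathbb{R}_+;\mathbb{R})$ follows from the tightness of each $(N^{(n)}_1, M^{(n)}_1)$ together with Aldous' criterion, the averaging only helping. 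Combining these with the standard criterion for tightness of measure-valued processes (tightness of each scalar functional plus a compact-containment/mass-control condition, the latter being automatic here since each $P^{(n)}$ is a probability measure) yields $C$- or at least $\mathrm D$-tightness of $\{(P^{(n_k)}_N,P^{(n_k)}_M)\}$.

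To identify the limit I would argue that, by exchangeability and a de Finetti / law-of-large-numbers argument, any subsequential limit $(P_X,P_Z)$ must, at each fixed $t$, be the conditional law of $(X_1(t),Z_1(t))$ given the directing $\sigma$-field generated by $(\overline X,\overline Z,B)$; since Theorem~\ref{Thm.MainResultSL.03} tells us exactly this conditional law — it is $\mathcal N\circ\overline X(t)$, $\mathcal P_\zeta\circ(\overline X(t)/\zeta)$, or $\delta_{(0,0)}$ — and since these are continuous functionals of the continuous process $\overline X$, the limiting measure-valued process is the deterministic time-change of a fixed curve in $\mathrm M$ driven by $\overline X$, giving precisely \eqref{eqn.213} in each of the three regimes. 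Concretely, for a test function $g$ one checks $\frac1n\sum_i g(N^{(n)}_i,M^{(n)}_i)\to \langle \mathcal N\circ\overline X,g\rangle$ (resp.\ the other two limits) by writing the limit of the first two moments: $\mathbf E[\frac1n\sum_i g(\cdot)] \to \mathbf E[\langle \text{limit},g\rangle]$ from the marginal convergence, and $\mathbf E[(\frac1n\sum_i g(\cdot))^2]\to \mathbf E[\langle\text{limit},g\rangle^2]$ from the two-particle ($K=2$) convergence and the conditional independence of $(X_1,Z_1)$ and $(X_2,Z_2)$, so the empirical average concentrates on the (random) conditional expectation; upgrading from fixed-$t$ convergence of the scalar functionals to process-level convergence uses the tightness from the previous step.

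I expect the main obstacle to be the promotion from finite-dimensional (in the particle index) and fixed-time information to weak convergence at the level of the \emph{measure-valued càdlàg processes}: one must verify the tightness of $\{P^{(n_k)}\}$ in $\mathrm D(\mathbb{R}_+;\mathrm M(\cdot))$ carefully — controlling the modulus of continuity of $t\mapsto\langle P^{(n)}(t),g\rangle$ uniformly, which near $\zeta\in(0,\infty)$ involves jumps of size $n\beta_n^2$ for $N^{(n)}_i$ and $\sqrt n\beta_n$ for $M^{(n)}_i$ and hence requires the Aldous condition rather than $C$-tightness — and then matching accumulation points, where the delicate point is ensuring the conditional-independence structure from Theorem~\ref{Thm.MainResultSL.03} passes through the continuous-mapping/second-moment computation uniformly in the test function. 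The boundary case $\zeta=\infty$ is comparatively easy, since there $X_i,Z_i\equiv 0$ forces $P^{(n_k)}\to\delta_{(0,0)}$ directly from Theorem~\ref{Thm.MainResultSL.03}(3) and a first-moment bound.
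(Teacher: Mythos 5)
Your overall strategy---exploit exchangeability and the finite-particle convergence of Theorem~\ref{Thm.MainResultSL.03}, then identify the limit of the empirical measure through moments of $\frac1n\sum_i g(\cdot)$---is the same family of argument as the paper's, which isolates this step as a general statement (Lemma~\ref{Lemma.501}) about exchangeable arrays whose $K$-particle marginals converge to a conditionally i.i.d.\ limit. But your identification step has a genuine gap: you propose to conclude, from convergence of the \emph{first two} moments of $P^n(g)$ to those of $\langle Q_{\overline X},g\rangle$, that ``the empirical average concentrates on the (random) conditional expectation.'' That inference is valid only when the limiting measure is deterministic: in Sznitman's classical $k=2$ criterion, $\mathbf{E}[(P^n(g))^2]\to (\mathbf{E}[P^n(g)])^2$ forces the variance to vanish. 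Here the limit $Q_{\overline X}$ is genuinely random (it depends on $\overline X$), and two matching moments neither determine the law of a bounded random variable nor yield any $L^2$ contraction toward a random target---for the latter you would need the cross term $\mathbf{E}\big[P^n(g)\cdot\langle Q,g\rangle\big]$, which is not accessible at finite $n$ since the prelimit particles are exchangeable but not conditionally i.i.d. The paper closes this by computing \emph{all} moments $\mathbf{E}[(P^n_\xi(g))^K]$, $K\ge 1$: exchangeability reduces each to the $K$-particle marginal plus a diagonal remainder of order $1-\frac{n!}{(n-K)!\,n^K}$, the $K$-particle convergence and conditional independence give the limit $\mathbf{E}[(Q_{\overline\xi}(g))^K]$, and the method of moments for uniformly bounded variables concludes. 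You already have the input for every $K$ from Theorem~\ref{Thm.MainResultSL.03}; you must use all of it, not just $K=2$.

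A secondary structural point: framing the problem as convergence of the measure-valued c\`adl\`ag process $t\mapsto P^{(n)}(t,\cdot)$ obliges you to prove tightness in $\mathrm D(\mathbb{R}_+;\mathrm M(\cdot))$ \emph{and} to identify multi-time finite-dimensional distributions---fixed-$t$ identification plus tightness does not determine the limiting process. The paper sidesteps both by viewing each particle's entire trajectory as one point of the Polish space $\mathrm D(\mathbb{R}_+;\mathbb{R}_+\times\mathbb{R})$ and proving convergence of the empirical measure on path space; convergence of bounded random measures on a Polish space then reduces (Proposition~\ref{Prop.Lemma.502}) to convergence of the scalars $P^n(g)$ for continuous compactly supported $g$, so no separate tightness argument for measure-valued processes is needed. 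If you keep your framing, you must at least run the moment argument jointly at finitely many times; switching to the path-space formulation is cleaner.
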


\begin{remark}
Theorems~\ref{Thm.MainResultSL.03} and \ref{Main.Thm03} identify three distinct regimes depending on the asymptotic behavior of $n \beta_n^2$, ranging from complete synchronization when $n\beta_n^2 \to 0$, conditional independence when $n\beta_n^2 \to \zeta \in (0, \infty)$, to complete extinction when $n\beta_n^2 \to \infty$. These regimes can be understood intuitively by analyzing the rescaled processes.
More precisely, recall that for each $1 \leq i \leq n$, we have $N_i^n(t) = \Lambda^n_i(t) + M_i^n(t)$, which, after rescaling, turns to be
\begin{equation}
\label{eq:rescaledprocesses}
N_i^{(n)}(t) = \Lambda^{(n)}_i(t) + \sqrt{n \beta_n^2} \cdot M_i^{(n)}(t).
\end{equation}
In view of the proof of Theorem~\ref{Thm.MainResultSL.03}, the three processes $N_i^{(n)} $, $\Lambda^{(n)}_i$ and $M_i^{(n)}$ converge in distribution to $X_i$, $U_i$, and $Z_i$, respectively. 
 Note that all particles share the same intensity, i.e. $U_i = U$ for every $i$, and $M_i^{(n)}(t)$ is a martingale with quadratic variation $N_i^{(n)}(t)$. Similarly as in Theorem~\ref{Thm.MainResultSL.01}, we have
 \beqnn
 Z_i = B_i \circ X_i, \quad i\geq 1,
 \eeqnn
 where $B_1,B_2,\cdots$, are independent martingales. Substituting the limits of the processes into \eqref{eq:rescaledprocesses}, we obtain that 
 \beqnn
 X_i \approx U_i + \sqrt{n \beta_n^2} \cdot B_i \circ X_i.
 \eeqnn
 Consequently, the behavior of $X_i$ depends on the  limiting value of $n \beta_n^2$:
 \begin{itemize}
    \item When $n \beta_n^2 \to 0$, the martingale term $\sqrt{n \beta_n^2} \cdot M_i^{(n)}(t)$ becomes asymptotically negligible. This simplifies the approximation to
    \beqnn
    X_i \approx U,
    \eeqnn
    meaning that all particles behave asymptotically the same and are effectively indistinguishable.
    
    \item  In this intermediate regime when $n \beta_n^2 \to \zeta \in (0, \infty)$, we have
    \beqnn
    X_i \approx U_i + \sqrt{\zeta} \cdot B_i \circ X_i,
    \eeqnn
    which indicates that all particles remain conditionally independent, with the martingale term contributing to their individual randomness.

    \item When $n \beta_n^2 \to \infty$, the term $\sqrt{n \beta_n^2} \cdot B_i \circ X_i$ would diverge unless $B_i \circ X_i = 0$. Therefore, we conclude that
    \beqnn
    X_i = Z_i = U_i = 0,
    \eeqnn
    meaning that the particles cease to evolve and all processes converge to zero.
\end{itemize}
\end{remark}

\begin{remark}
As explained in the Introduction, our results can be compared with those in \cite{DelattreFournierHoffmann2016}, where $\phi^n = \phi$ is chosen independently of $n$. When $\|\phi\|_{L^1} < 1$, they proved a propagation of chaos as $n \to \infty$, where each particle becomes asymptotically independent and behaves as a simple inhomogeneous Poisson process, whose intensity is the solution of the Volterra integral equation
\begin{equation}
\label{eq:volterraintegralMF}
\overline{\lambda}_t = \mu + \int_0^t \phi(t-s) \overline{\lambda}_s \, ds.
\end{equation}
This result is similar to the second case in Theorems~\ref{Thm.MainResultSL.03} and \ref{Main.Thm03}, where all particle tend to be asymptotically independent and behave as a sequence of independent inhomogeneous Poisson processes. The main difference in that case is that \eqref{eq:volterraintegralMF} is replaced by a stochastic Volterra equation. This is intuitive and stems from the fact that nearly unstable Hawkes processes converge to a stochastic limit, whereas under stability assumptions, the law of large numbers ensures convergence toward a deterministic level.
\end{remark}

 \section{Proof of Theorem~\ref{Thm.MainResultSL.01}}
 \label{Sec.Proof01} 
 \setcounter{equation}{0}
 
In this section, we present the detailed proof of Theorem~\ref{Thm.MainResultSL.01}. The proof is divided into three parts:
 \begin{itemize}
    \item We first establish several uniform moment estimates for the Hawkes processes, which play an important role in the tightness argument; see Section~\ref{Sec.Moment}. 	
    
  \item The $C$-tightness of the sequence $\big\{ \big( \Lambda^{(n)}, N^{(n)}, M^{(n)} \big) \big\}_{n \geq 1}$ is proved in Section~\ref{Subsection.Tightness} using the Kolmogorov-Chentsov theorem, which states that stochastic processes $\{\xi^n\}_{n\geq 1} \subset \mathrm{D}(\mathbb{R}_+; \mathbb{R}^d)$ is $C$-tight if for each $T \geq 0$, there exist constants $C, p, \theta > 0$ such that for any $h \in (0, 1)$,
  	\beqnn
  	\sup_{n\geq 1}\sup_{t\in[0,T]}\mathbf{E}\Big[ \big|\xi^{n}(t+h)-\xi^{n}(t)\big|^p\Big]\leq C\cdot h^{1+\theta}. 
  	\eeqnn

    \item The characterization of accumulation points is provided in Section~\ref{Subsection.Characterization}.
     \end{itemize}

 \subsection{A priori estimates} \label{Sec.Moment}
 
 We begin by providing a uniform upper bound estimate on the means $\{ \mathbf{E}[\beta_n^2 \cdot \lambda^n] \}_{n \geq 1}$. Taking expectations on both sides of \eqref{MartingaleRep.n}, we obtain
 \beqnn
 \mathbf{E}\big[\lambda^n(t)\big] =\mu^n + \big\|\psi^n\big\|_{L^1_t} \cdot \mu^n.
 \eeqnn
Using Condition~\ref{Main.Condition.01} and the assumption that $\beta_n \mu^n \to a$, for each $T \geq 0$, there exists a constant $C > 0$ depending only on $T$ such that for any $n \geq 1$,
 \beqlb\label{eqn.301} 
 \sup_{t\in[0,T]}\mathbf{E}\Big[\beta_n^2\cdot \lambda^n(t)\Big] = \beta_n^2\mu^n + \beta_n\big\|\psi^n\big\|_{L^1_T} \cdot \beta_n\mu^n \leq C
 \quad \mbox{and}\quad 
 \mathbf{E}\big[  \Lambda^{(n)}(T)\big]\leq C\cdot T. 
 \eeqlb
We now prove in the next lemma that the same kind of $L^2$-estimates hold.
 
 \begin{lemma}\label{Lemma.301}
 	For each $T\geq 0$, there exists a constant $C>0$ such that for any $n\geq 1$,
 	\beqlb\label{eqn.302}
    \sup_{t\in[0,T]}\mathbf{E}\Big[\big|  \beta_n^2 \cdot \lambda^{n}(t)\big|^2 \Big]+   \mathbf{E}\bigg[ \sup_{t\in[0,T]}\big|   M^{(n)}(t)\big|^2 \bigg] \leq C.
 	\eeqlb
 \end{lemma}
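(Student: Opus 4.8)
The plan is to combine the martingale representation \eqref{MartingaleRep.n} with the Burkholder–Davis–Gundy (BDG) inequality and a Grönwall-type bootstrap. Write $\lambda^n(t) = \mu^n + \|\psi^n\|_{L^1_t}\cdot \mu^n + \int_0^t \psi^n(t-s)\,dM^n(s)$, so that after rescaling by $\beta_n^2$ the first two (deterministic) terms are bounded uniformly on $[0,T]$ by \eqref{eqn.301}; hence it suffices to control the second moment of the stochastic convolution $I^n(t) := \beta_n^2\int_0^t \psi^n(t-s)\,dM^n(s)$. Since $M^n$ is a purely-discontinuous martingale with predictable quadratic variation $\langle M^n_i\rangle_t = \Lambda^n_i(t) = \int_0^t \lambda^n_i(s)\,ds$ (and distinct coordinates of $M^n$ never jump simultaneously, as $N^n$ is simple), the conditional second moment of each coordinate of $I^n(t)$ equals, by Itô isometry for Poisson-type integrals, $\beta_n^4\sum_{j} \mathbf{E}\big[\int_0^t \psi^n_{ij}(t-s)^2\,\lambda^n_j(s)\,ds\big]$.

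The key step is then: bound this by $\beta_n^2 \|\psi^n\|_{L^2_T}^2 \cdot \sup_{s\le t}\mathbf{E}[\beta_n^2\lambda^n(s)]$ up to dimensional constants — using that $\beta_n^2\|\psi^n_{ij}\|^2_{L^2_T}$ is uniformly bounded by Condition~\ref{Main.Condition.01} — which yields $\mathbf{E}[|I^n(t)|^2] \le C\,\beta_n^2\cdot\sup_{s\le t}\mathbf{E}[\beta_n^2\lambda^n(s)]$. Wait — more carefully, I should set up a Grönwall argument directly on $g^n(t):=\sup_{s\le t}\mathbf{E}[|\beta_n^2\lambda^n(s)|^2]$: from the representation, $\mathbf{E}[|\beta_n^2\lambda^n(t)|^2] \lesssim C_T + \beta_n^2\int_0^t \|\psi^n(t-s)\|^2 \cdot \mathbf{E}[\beta_n^2\lambda^n(s)]\,ds$, and since the already-established first-moment bound \eqref{eqn.301} gives $\mathbf{E}[\beta_n^2\lambda^n(s)]\le C$, the integral is bounded by $C\cdot\beta_n^2\|\psi^n\|^2_{L^2_T}\le C$. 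This already gives the uniform $L^2$ bound on $\beta_n^2\lambda^n$ without even needing a Grönwall iteration — the first-moment control feeds directly into the second-moment estimate because $\psi^n$ appears squared and $\beta_n^2\|\psi^n\|^2_{L^2_T}$ is what Condition~\ref{Main.Condition.01} controls.

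For the second term $\mathbf{E}[\sup_{t\le T}|M^{(n)}(t)|^2]$, apply Doob's $L^2$-maximal inequality (or BDG) to the martingale $M^{(n)}=\beta_n M^n$: $\mathbf{E}[\sup_{t\le T}|M^{(n)}(t)|^2]\le C\,\mathbf{E}[\langle M^{(n)}\rangle_T] = C\beta_n^2\,\mathbf{E}[\Lambda^n(T)] = C\,\mathbf{E}[\Lambda^{(n)}(T)]\le C\cdot T$ by the second part of \eqref{eqn.301}. This part is essentially immediate.

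The main obstacle I anticipate is bookkeeping in the multivariate/matrix setting: making sure the componentwise bounds $\beta_n^2\|\psi^n_{ij}\|^2_{L^2_T}\le C$ assemble correctly into an estimate for $\mathbf{E}[|\beta_n^2\lambda^n(t)|^2]$ with the Euclidean norm on $\mathbb{R}^d$, which requires a Cauchy–Schwarz step across the $d$ coordinates and a crude bound like $\mathbf{E}[\beta_n^2\lambda^n_j(s)]\le \sum_k \mathbf{E}[\beta_n^2\lambda^n_k(s)]$; the constants will depend on $d$ but that is harmless. A secondary subtlety is justifying the Itô isometry / second-moment identity for the stochastic convolution against $dM^n$ when $\psi^n$ is only $L^2_{\rm loc}$ — this is standard for integrals against compensated random measures once one knows $\mathbf{E}[\int_0^t\psi^n_{ij}(t-s)^2\lambda^n_j(s)\,ds]<\infty$, which follows a posteriori from the first-moment bound and $\psi^n_{ij}\in L^2_{\rm loc}$, so one should first truncate or argue by monotone approximation and then pass to the limit.
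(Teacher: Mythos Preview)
Your proposal is correct and follows essentially the same route as the paper: bound $\mathbf{E}\big[\sup_{t\le T}|M^{(n)}(t)|^2\big]$ via BDG/Doob and the first-moment estimate \eqref{eqn.301}, then handle $\mathbf{E}\big[|\beta_n^2\lambda^n(t)|^2\big]$ through the martingale representation \eqref{MartingaleRep.n}, controlling the stochastic convolution by BDG/It\^o isometry so that $\beta_n^2|\psi^n|^2$ appears against $\mathbf{E}[\beta_n^2\lambda^n]$, and concluding from Condition~\ref{Main.Condition.01} together with the first-moment bound---exactly as the paper does. Your observation that no Gr\"onwall iteration is needed (the first-moment bound feeds directly into the second-moment estimate) is precisely the mechanism the paper uses.
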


\begin{proof}
 Using the Burkholder-Davis-Gundy inequality and Equation \eqref{eqn.301}, we have
 \beqnn
 \mathbf{E}\bigg[\sup_{t\in[0,T]}   \big|    M^{(n)}(t)\big|^2 \bigg] 
 \leq C \cdot \mathbf{E}\Big[   N^{(n)}(T)  \Big] = C \cdot \mathbf{E}\big[  \Lambda^{(n)}(T)\big] 
 \leq C\cdot  T 
 \eeqnn
 for some constant $C>0$ independent of $n$. 
 Moreover, using Equations \eqref{MartingaleRep.n} and \eqref{eqn.301}, we also have
 \beqnn
 \mathbf{E}\Big[\big| \beta_n^2\lambda^n(t)\big|^2\Big]
  \ar\leq\ar 4 \cdot \Big| \beta_n^2\mu^n+ \beta_n\big\|\psi^n\big\|_{L^1_t} \cdot  \beta_n\mu^n\Big|^2 + 4\cdot \mathbf{E}\bigg[\Big| \beta_n^2\int_0^t \psi^n(t-s)dM^n(s)\Big|^2\bigg] \cr
  \ar\leq \ar C + 4\cdot \mathbf{E}\bigg[\Big| \beta_n^2\int_0^t \psi^n(t-s)dM^n(s)\Big|^2\bigg].
 \eeqnn
 Similarly as in the proof of Lemma 3.9 in \cite{HorstXuZhang2023a}, we apply
 the Burkholder-Davis-Gundy inequality to the last expectation and we get 
 \beqnn
 \mathbf{E}\bigg[\Big| \beta_n^2\int_0^t \psi^n(t-s)dM^n(s)\Big|^2\bigg] 
 \ar\leq\ar C \cdot \mathbf{E}\bigg[  \beta_n^4\int_0^t |\psi^n(t-s)|^2dN^n(s) \bigg] \cr
 \ar=\ar C \cdot   \beta_n^2\int_0^t |\psi^n(t-s)|^2\mathbf{E}\big[ \beta_n^2\lambda^n(s)\big]ds , 
 \eeqnn
 which is bounded uniformly in $n\geq 1$ and $t\in[0,T]$ because the first inequality in \eqref{eqn.301} and the fact that $ \beta_n^2\int_0^T |\psi^n(s)|^2\, ds$ is uniformly bounded; see Condition~\ref{Main.Condition.01}.
\end{proof}

 \begin{corollary} \label{Corollary.302}
 	For each $T\geq 0$, we have $ \sup_{t\in[0,T]}  \big|  N^{(n)}(t)- \Lambda^{(n)}(t)  \big|\overset{\rm p}\to 0$ as $n\to\infty$.
%
%
 \end{corollary}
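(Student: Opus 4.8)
The plan is to show that $N^{(n)} - \Lambda^{(n)} = \beta_n^2 M^n$ tends to $0$ uniformly on compacts in probability. Note that we already rescaled $M^n$ by $\beta_n$ (not $\beta_n^2$) to obtain the nontrivial limit $M^{(n)} = \beta_n M^n$; thus $N^{(n)}(t) - \Lambda^{(n)}(t) = \beta_n \cdot M^{(n)}(t)$, which is the rescaled martingale multiplied by the extra vanishing factor $\beta_n$. So the statement should follow immediately from the $L^2$-bound on $\sup_{t\le T}|M^{(n)}(t)|$ already established in Lemma~\ref{Lemma.301}.

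Concretely, first I would write, for each fixed $T \ge 0$,
\[
\sup_{t\in[0,T]}\big|N^{(n)}(t)-\Lambda^{(n)}(t)\big| = \beta_n\cdot\sup_{t\in[0,T]}\big|M^{(n)}(t)\big|.
\]
Then by Lemma~\ref{Lemma.301} there is a constant $C>0$, independent of $n$, with $\mathbf{E}\big[\sup_{t\in[0,T]}|M^{(n)}(t)|^2\big]\le C$. Hence by Markov's inequality, for any $\varepsilon>0$,
\[
\mathbf{P}\Big(\sup_{t\in[0,T]}\big|N^{(n)}(t)-\Lambda^{(n)}(t)\big| > \varepsilon\Big)
= \mathbf{P}\Big(\sup_{t\in[0,T]}\big|M^{(n)}(t)\big| > \varepsilon/\beta_n\Big)
\le \frac{\beta_n^2}{\varepsilon^2}\,\mathbf{E}\Big[\sup_{t\in[0,T]}\big|M^{(n)}(t)\big|^2\Big]
\le \frac{C\,\beta_n^2}{\varepsilon^2}.
\]
Since $\beta_n \to 0$ as $n\to\infty$, the right-hand side vanishes, which gives $\sup_{t\in[0,T]}|N^{(n)}(t)-\Lambda^{(n)}(t)| \overset{\rm p}{\to} 0$ for every $T\ge 0$, as claimed.

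There is essentially no obstacle here: the corollary is a routine consequence of the uniform $L^2$-control in Lemma~\ref{Lemma.301} together with the fact that $N^{(n)}-\Lambda^{(n)}$ carries one more power of the vanishing scale $\beta_n$ than the quantity $M^{(n)}$ that was tuned to have a nondegenerate limit. The only thing to be slightly careful about is bookkeeping the exponents in the three rescalings ($\beta_n^2$ for $\Lambda^n$ and $N^n$, $\beta_n$ for $M^n$), so that the difference indeed picks up the factor $\beta_n$ rather than $1$; once that is checked, Markov's inequality closes the argument.
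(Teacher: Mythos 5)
Your proof is correct and follows essentially the same route as the paper: both rest on the identity $N^{(n)}-\Lambda^{(n)}=\beta_n M^{(n)}$ and the uniform $L^2$-bound of Lemma~\ref{Lemma.301}, the only cosmetic difference being that the paper concludes via convergence in $L^2$ while you pass through Markov's inequality. Nothing is missing.
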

 \proof Note that $ N^{(n)}(t)-  \Lambda^{(n)}(t)= \beta_n M^{(n)}(t)$. 
 Therefore, by \eqref{eqn.302}, we have 
 \beqlb\label{eqn.303}
 \mathbf{E}\bigg[ \sup_{t\in[0,T]} \big| N^{(n)}(t)-  \Lambda^{(n)}(t)\big|^2  \bigg]
 = \beta_n^2\cdot\mathbf{E}\bigg[\sup_{t\in[0,T]} \big|  M^{(n)}(t)\big|^2  \bigg] 
 \leq  C \cdot \beta_n^2 ,
 \eeqlb
 which goes to $0$ as $n\to\infty$.
 \qed

 \subsection{Theorem~\ref{Thm.MainResultSL.01}(1): $C$-tightness}\label{Subsection.Tightness}
 
 In this section, we prove the $C$-tightness of the sequence $\big\{\big( \Lambda^{(n)},   N^{(n)},  M^{(n)} \big)\big\}_{n\geq 1}$. 
 We first consider $\big\{  \Lambda^{(n)}\big\}_{n\geq 1} $. We plan to use the Kolmogorov-Chentsov criterion, which require a uniform bound on the increment of $\Lambda^{(n)}$.
 
 \begin{proposition}\label{Prop.303}
 For each $T\geq 0$, there exists a constant $C>0$ such that  for any $h\in (0,1)$, 
 \beqlb\label{eqn.304}
 \sup_{n\geq 1} \sup_{t\in[0,T]} \mathbf{E}\Big[ \big|  \Lambda^{(n)}(t+h)- \Lambda^{(n)} (t)\big|^2 \Big] 
 \ar\leq\ar C\cdot h^2. 
 \eeqlb
 	
 \end{proposition}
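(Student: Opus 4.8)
\textbf{Proof plan for Proposition~\ref{Prop.303}.}

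The plan is to exploit the explicit expression \eqref{eqn.210} for the compensator, namely $\Lambda^{(n)}(t) = \beta_n^2 \int_0^t \lambda^n(s)\,ds$, so that the increment becomes
\[
\Lambda^{(n)}(t+h) - \Lambda^{(n)}(t) = \int_t^{t+h} \beta_n^2 \lambda^n(s)\,ds.
\]
First I would apply the Cauchy--Schwarz inequality (in the form $|\int_t^{t+h} g(s)\,ds|^2 \leq h \int_t^{t+h} |g(s)|^2\,ds$) to bound the square of this increment by $h \int_t^{t+h} |\beta_n^2 \lambda^n(s)|^2\,ds$. Taking expectations and using Tonelli's theorem to interchange expectation and the time integral, this is at most $h \int_t^{t+h} \mathbf{E}[|\beta_n^2 \lambda^n(s)|^2]\,ds$.

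The second step is to invoke the uniform $L^2$-bound on the rescaled intensity already established in Lemma~\ref{Lemma.301}: for each $T \geq 0$ there is a constant $C > 0$, independent of $n$, with $\sup_{s \in [0,T]} \mathbf{E}[|\beta_n^2 \lambda^n(s)|^2] \leq C$. Since $h \in (0,1)$ and $t \in [0,T]$, the integration range $[t, t+h] \subset [0, T+1]$, so applying the lemma on $[0, T+1]$ gives $\mathbf{E}[|\beta_n^2 \lambda^n(s)|^2] \leq C$ uniformly for $s$ in that range. Hence
\[
\mathbf{E}\Big[ \big| \Lambda^{(n)}(t+h) - \Lambda^{(n)}(t) \big|^2 \Big] \leq h \int_t^{t+h} C\,ds = C \cdot h^2,
\]
uniformly in $n \geq 1$ and $t \in [0,T]$, which is exactly \eqref{eqn.304}.

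There is essentially no obstacle here: the estimate is a direct consequence of the Lipschitz-in-$L^2$ structure of the compensator together with the a priori moment bound. The only minor point to handle carefully is that the constant $C$ in the statement should be taken as the constant from Lemma~\ref{Lemma.301} applied on the slightly enlarged interval $[0, T+1]$ rather than $[0,T]$, so that the bound covers all $t \in [0,T]$ and $h \in (0,1)$ simultaneously; this does not affect the form $C \cdot h^2$ of the conclusion. This proposition, together with the analogous increment bounds for $M^{(n)}$ (which will follow from the Burkholder--Davis--Gundy inequality and the same moment estimates) and for $N^{(n)}$ via Corollary~\ref{Corollary.302}, then feeds into the Kolmogorov--Chentsov criterion to yield the $C$-tightness asserted in Theorem~\ref{Thm.MainResultSL.01}(1).
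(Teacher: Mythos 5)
Your proposal is correct and follows essentially the same route as the paper: the increment is written as $\int_t^{t+h}\beta_n^2\lambda^n(s)\,ds$, bounded via H\"older/Cauchy--Schwarz by $h\int_t^{t+h}|\beta_n^2\lambda^n(s)|^2\,ds$, and then controlled using Fubini and the uniform second-moment bound of Lemma~\ref{Lemma.301}. Your remark about applying the lemma on $[0,T+1]$ to cover $t+h$ is a minor tidiness point the paper leaves implicit, but it does not change the argument.
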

 \proof By the definition of $\Lambda^{n}$, we have
 \beqnn
 \Lambda^{(n)}(t+h)- \Lambda^{(n)}(t)= \int_{t}^{t+h} \beta_n^2\lambda^n(s)ds 
 \eeqnn
 and hence by H\"older's inequality,
 \beqnn 
 \big|  \Lambda^{(n)}(t+h)- \Lambda^{(n)}(t) \big|^2 \leq 
 h\cdot \int_{t}^{t+h} \big|\beta_n^2\lambda^n(s)\big|^2ds.
 \eeqnn
 Taking expectations on both sides of this inequality and then using Fubini's theorem along with \eqref{eqn.302}, 
 \beqlb\label{eqn.310}
 \mathbf{E}\Big[ \big|  \Lambda^{(n)}(t+h)- \Lambda^{(n)} (t)\big|^2 \Big] 
 \leq C\cdot h^2,
 \eeqlb
 for some constant $C>0$ depending only on $T$. 
 \qed 
 
 \begin{corollary}\label{Corollary.304}
  The sequence $\big\{\big( \Lambda^{(n)}, N^{(n)},  M^{(n)} \big)\big\}_{n\geq 1}$ is $C$-tight. 
 \end{corollary}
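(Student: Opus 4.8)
The plan is to deduce the $C$-tightness of $\{(\Lambda^{(n)}, N^{(n)}, M^{(n)})\}_{n\geq 1}$ from the three estimates already in hand: the increment bound \eqref{eqn.304} for $\{\Lambda^{(n)}\}_{n\geq 1}$, the uniform $L^2$-bound \eqref{eqn.302} for $\{M^{(n)}\}_{n\geq 1}$, and Corollary~\ref{Corollary.302} comparing $N^{(n)}$ with $\Lambda^{(n)}$. The first step is to apply the Kolmogorov–Chentsov criterion recalled in the itemized list at the start of this section to the sequence $\{\Lambda^{(n)}\}_{n\geq 1}$: Proposition~\ref{Prop.303} gives exactly the required bound with $p = 2$ and $\theta = 1$ (and $\Lambda^{(n)}(0) = 0$ is deterministic, so the initial laws are trivially tight). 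Hence $\{\Lambda^{(n)}\}_{n\geq 1}$ is $C$-tight in $\mathrm{D}(\mathbb{R}_+;\mathbb{R}^d)$.

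Next I would handle $\{M^{(n)}\}_{n\geq 1}$. Since $M^{(n)} = N^{(n)} - \Lambda^{(n)}$ and, by Corollary~\ref{Corollary.302}, $\sup_{t\in[0,T]}|N^{(n)}(t) - \Lambda^{(n)}(t)| = \beta_n \sup_{t\in[0,T]}|M^{(n)}(t)| \overset{\rm p}{\to} 0$, it is cleanest to first establish $C$-tightness of $\{N^{(n)}\}_{n\geq 1}$ and then transfer it. For $\{N^{(n)}\}_{n\geq 1}$: write $N^{(n)} = \Lambda^{(n)} + \beta_n M^{(n)}$; the first summand is $C$-tight by the previous paragraph, and the second converges to $0$ uniformly on compacts in probability by \eqref{eqn.303}, hence is $C$-tight (a sequence converging to a continuous process is $C$-tight). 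Since $C$-tightness is stable under sums, $\{N^{(n)}\}_{n\geq 1}$ is $C$-tight. Then $M^{(n)} = N^{(n)} - \Lambda^{(n)}$ is a difference of two $C$-tight sequences, so it too is $C$-tight. Finally, $C$-tightness of each marginal coordinate of a finite vector of sequences implies $C$-tightness of the joint sequence in the product space (tightness of marginals plus the fact that the modulus of continuity of the vector is controlled by the maximum of the coordinate moduli), giving $C$-tightness of $\{(\Lambda^{(n)}, N^{(n)}, M^{(n)})\}_{n\geq 1}$ in $\mathrm{D}(\mathbb{R}_+;\mathbb{R}_+^d\times\mathbb{R}_+^d\times\mathbb{R}^d)$.

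Strictly speaking there is a minor subtlety worth a remark: the Kolmogorov–Chentsov statement quoted is phrased for $\mathrm{D}(\mathbb{R}_+;\mathbb{R}^d)$-valued sequences and on each fixed $[0,T]$, so one applies it on $[0,T]$ for every $T$ and uses the standard fact that $C$-tightness on $\mathrm{D}([0,T])$ for all $T$ yields $C$-tightness on $\mathrm{D}(\mathbb{R}_+)$. I do not expect any real obstacle here: every quantitative input has already been proved, and the argument is a routine assembly of stability properties of $C$-tightness (closure under continuous images such as sums, and under combination of a tight sequence with one converging in probability to a continuous limit, and under passage to joint laws). If anything, the only point requiring a word of care is the transfer of tightness through $N^{(n)} = \Lambda^{(n)} + \beta_n M^{(n)}$ versus $M^{(n)} = N^{(n)} - \Lambda^{(n)}$: one must order the deductions so as not to argue circularly, which is why I first pin down $\{\Lambda^{(n)}\}_{n\geq 1}$, then $\{N^{(n)}\}_{n\geq 1}$ via \eqref{eqn.303}, and only then $\{M^{(n)}\}_{n\geq 1}$.
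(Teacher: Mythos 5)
Your treatment of $\{\Lambda^{(n)}\}_{n\geq 1}$ (Kolmogorov--Chentsov via Proposition~\ref{Prop.303}) and of $\{N^{(n)}\}_{n\geq 1}$ (writing $N^{(n)}=\Lambda^{(n)}+\beta_n M^{(n)}$ and using \eqref{eqn.303}) matches the paper's argument. The problem is your third step. You assert ``$M^{(n)} = N^{(n)} - \Lambda^{(n)}$ is a difference of two $C$-tight sequences,'' but this identity is false: with the scalings $N^{(n)}=\beta_n^2 N^n$, $\Lambda^{(n)}=\beta_n^2\Lambda^n$, $M^{(n)}=\beta_n M^n$, one has $N^{(n)}-\Lambda^{(n)}=\beta_n M^{(n)}$, as you yourself quote correctly one paragraph earlier. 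So the difference of the two $C$-tight sequences only gives $C$-tightness of $\beta_n M^{(n)}$, and since $\beta_n\to 0$ you cannot divide through: $C$-tightness is not preserved under multiplication by the diverging factors $\beta_n^{-1}$. This is a genuine gap, not a cosmetic slip, because the obvious repair also fails: the available second-moment increment bound for the martingale is $\mathbf{E}[|M^{(n)}(t+h)-M^{(n)}(t)|^2]=\mathbf{E}[\Lambda^{(n)}(t+h)-\Lambda^{(n)}(t)]\leq Ch$, which has exponent $1$ rather than $1+\theta$, so the quoted Kolmogorov--Chentsov criterion with $p=2$ does not apply, and no higher-moment estimates have been established.

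The paper closes this step with a martingale-specific tightness criterion: by \eqref{eqn.211}, $M^{(n)}$ has predictable quadratic covariation $\langle M^{(n)}_i,M^{(n)}_j\rangle=\mathbf{1}_{\{i=j\}}\Lambda^{(n)}_i$, the brackets are $C$-tight by the first step, and all jumps of $M^{(n)}$ are bounded by $\beta_n\to 0$; Theorem~4.13 in \cite[p.358]{JacodShiryaev2003} then yields $C$-tightness of $\{M^{(n)}\}_{n\geq 1}$. You need this (or an equivalent Aldous-type argument for martingales) to finish; the rest of your assembly --- reduction to marginals and passage from $[0,T]$ to $\mathbb{R}_+$ --- is fine.
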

 \proof By Corollary 3.33(b) in \cite[p.353]{JacodShiryaev2003}, it suffices to prove that the three sequences $\big\{ \Lambda^{(n)}\big\}_{n\geq 1} $, $\big\{  N^{(n)}\big\}_{n\geq 1} $ and $\big\{  M^{(n)} \big\}_{n\geq 1}$ are $C$-tight separately. 
 Proposition~\ref{Prop.303} and the Kolmogorov-Chentsov theorem directly induce the $C$-tightness of $\{  \Lambda^{(n)}\}_{n\geq 1} $. Corollary~\ref{Corollary.302} therefore also yields the $C$-tightness of $\{  N^{(n)}\}_{n\geq 1} $. 
  Finally, by \eqref{eqn.211} the martingale $ M^{(n)}$ has predictable quadratic co-variation 
  \beqlb \label{eqn.306}
  \big\langle  M^{(n)}_i,  M^{(n)}_j \big\rangle = \mathbf{1}_{\{ i\neq j \}}\cdot    \Lambda^{(n)},\quad   1\leq i,j\leq d.
  \eeqlb
  By Theorem 4.13 in \cite[p.358]{JacodShiryaev2003}, the $C$-tightness of $\{  M^{(n)}\}_{n\geq 1} $ follows from the $C$-tightness of  $\{   \Lambda^{(n)}\}_{n\geq 1} $ and the fact that all jumps of $  M^{(n)}$ are uniformly bounded by $\beta_n$ that vanishes as $n\to\infty$.  
  \qed

 \subsection{Theorem~\ref{Thm.MainResultSL.01}(2): Characterization} \label{Subsection.Characterization}
 Assume that $(U,X,Z)$ is the limit process of a sub-sequence $\{( \Lambda^{(n_k)} , N^{(n_k)}, M^{(n_k)} )\}_{k\geq 1}$. 
 By Corollary~\ref{Corollary.302}(1) and the Skorokhod representation theorem, we have $U\overset{\rm a.s.}=X$ and may assume that 
 \beqlb\label{eqn.305}
 \big( \Lambda^{(n_k)} , N^{(n_k)}, M^{(n_k)} \big) 
 \overset{\rm a.s.}\to \big(X,X,Z \big),
 \eeqlb
 in $\mathrm{D}(\mathbb{R}_+;\mathbb{R}_+^d\times\mathbb{R}_+^d\times \mathbb{R}^d)$ as $k\to\infty$. 
 Using Proposition~1.17(b) in \cite[p.328]{JacodShiryaev2003} along with the continuity of $(X,Z)$, the convergence also holds for the uniform norm on compact subsets. More precisely, we have for any $T\geq 0$, 
 \beqlb\label{eqn.307}
 \sup_{t\in [0,T]} \big| \Lambda^{(n_k)} (t) - X(t) \big|+\sup_{t\in [0,T]} \big| N^{(n_k)} (t) - X(t) \big| +  \sup_{t\in [0,T]} \big| M^{(n_k)} (t) - Z(t) \big| \overset{\rm a.s.}\to 0,
 \eeqlb
 as $n\to\infty$. This limit and the monotonicity of $N^{(n_k)}$ imply that $X$ is non-decreasing. 
 
 We now prove that $(X,Z)$ is a weak solution of \eqref{eqn.LimitSVE}. 
 By Theorem~6.26 in \cite[p.384]{JacodShiryaev2003} and \eqref{eqn.306}-\eqref{eqn.305}, the limit process $Z$ is a continuous martingale with predictable quadratic covariation given by
 \beqnn
 \big\langle Z_i,Z_j \big\rangle =  \mathbf{1}_{\{i=j\}}\cdot X_i,\quad  1\leq i,j\leq d.
 \eeqnn 
 By the martingale representation theorem; see Theorem~7.1 in \cite[p.84]{IkedaWatanabe1989}, there exists an extension of the original probability space on which is defined a standard $d$-dimensional Brownian motion $B$ such that 
 \beqnn
 Z_i(t) = B_i\circ X_i(t)  ,\quad  t\geq 0,\, 1\leq i\leq d,
 \eeqnn
 and hence the first equality in \eqref{eqn.LimitSVE} holds. 
 On the other hand, integrating both sides of \eqref{MartingaleRep.n} over the interval $[0,t]$ we have 
 \beqnn
 \Lambda^{n_k}(t)=\mu^{n_k}\cdot t + \int_0^t ds \int_0^s\psi^{n_k}(r)dr \cdot \mu^{n_k} + \int_0^t ds \int_0^s \psi^{n_k}(s-r)dM^{n_k}(r) .
 \eeqnn
 Applying Fubini's theorem and the stochastic Fubini theorem to the last two terms on the right-hand side of this equality and then multiplying both sides by $\beta_n^2$, we get
 \beqlb\label{eqn.308}
 \Lambda^{(n_k)}(t)\ar=\ar \beta_{n_k}^2\mu^{n_k}\cdot t + \int_0^t ds \int_0^s\beta_{n_k} \psi^{n_k}(r)dr \cdot \beta_{n_k}\mu^{n_k}   + \int_0^t  \beta_{n_k}\psi^{n_k}(s)
 \cdot  M^{(n_k)}(t-s)ds\cr
 \ar=\ar \beta_{n_k}^2\mu^{n_k}\cdot t + \int_0^t ds \int_0^s\beta_{n_k} \psi^{n_k}(r)dr \cdot \beta_{n_k}\mu^{n_k}  + \int_0^t  \beta_{n_k}\psi^{n_k}(s)
 \cdot  Z(t-s)ds\cr
 \ar\ar  + \int_0^t  \beta_{n_k}\psi^{n_k}(s)
 \cdot \Big( M^{(n_k)}(t-s)-Z(t-s)\Big)ds.
 \eeqlb
 Using \eqref{eqn.307} and then Condition~\ref{Main.Condition.01}, we have 
 \beqnn
 \sup_{t\in[0,T]}\bigg|  \int_0^t  \beta_{n_k}\psi^{n_k}(s)
 \cdot \Big( M^{(n_k)}(t-s)-Z(t-s)\Big)ds \bigg| 
 \ar\leq\ar  \int_0^T  \beta_{n_k}\psi^{n_k}(s)ds
 \cdot \sup_{t\in[0,T]}\Big| M^{(n_k)}(t)-Z(t)\Big| ,
 \eeqnn 
 which vanishes almost surely as $k\to\infty$. 
 Additionally, for each $t\geq 0$, by \eqref{eqn.212} we also have
 \beqnn
 \int_0^t  \beta_{n_k}\psi^{n_k}(s)
 \cdot  Z(t-s)ds \overset{\rm a.s.}\to \int_0^t  f (s)
  Z(t-s)ds ,
 \eeqnn
 as $k\to\infty$.
 From with these limits, \eqref{eqn.307}, Condition~\ref{Main.Condition.01} and the fact that $\beta_{n_k}\mu^{n_k} \to a$, we obtain the second equality in \eqref{eqn.LimitSVE} by passing each term in \eqref{eqn.308} to the limit.

 \section{Proof of Theorem~\ref{Thm.MainResultSL.03}}
 \label{Sec.Proof02} 
 \setcounter{equation}{0}

 First note that $\{ (\mu^n_0,\psi^n) \}_{n\geq 1}$ satisfies assumptions in Theorem~\ref{Thm.MainResultSL.01} and therefore, the sequence $\big\{\big(   \overline{\Lambda^{(n)}} ,  \overline{N^{(n)}}, \overline{M^{(n)}} \big) \big\}_{n\geq 1}$ is $C$-tight. By Corollary 3.33(b) in \cite[p.353]{JacodShiryaev2003}, it remains to prove that the sequence 
\beqlb\label{eqn.417}
\big\{ (  N^{(n)}_1,  M^{(n)}_1, \cdots,   N^{(n)}_K,  M^{(n)}_K) \}_{n\geq 1}
\eeqlb
is tight and Theorem~\ref{Thm.MainResultSL.03}(1)-(3) holds for every limit process.
First note that it is enough to prove Theorem~\ref{Thm.MainResultSL.03} under the additional assumption
 \beqlb\label{eqn.420}
 n\beta_n^2\to \zeta\in[0,\infty],\quad \mbox{as }n\to\infty.  
 \eeqlb
Indeed, once Theorem~\ref{Thm.MainResultSL.03} is proved under \eqref{eqn.420}, it can also be proved in the general case using the following argument.
 \begin{enumerate}
 	\item[(1)] By Prohorov's theorem the sequence \eqref{eqn.417} is tight if and only if it is relatively compact, i.e., every subsequence contains a weak convergent subsequence. Then, for each subsequence $\{n_k \}_{k\geq 1}$, one can always find a sequence $\{n'_k \}_{k\geq 1} \subset \{n_k \}_{k\geq 1}$ such that \eqref{eqn.420} holds. Then, the subsequence of \eqref{eqn.417}  along $\{n'_k \}_{k\geq 1}$ is tight and also relatively compact. This implies that the sequence \eqref{eqn.417} is tight.
 	
 	\item[(2)] Assume that $(\overline{U},\overline{X}, X_1,Z_1, \cdots, X_K, Z_K)$ is a limit process along some subsequence $\{n_k \}_{k\geq 1}$ and assume that the limit $n_k\beta_{n_k}^2\to \zeta\in[0,\infty]$ fails as $k\to\infty$. Then one can always find two different subsequences $\{n'_k \}_{n\geq 1}, \{n^{''}_k \}_{n\geq 1}\subset\{n_k \}_{k\geq 1}$ and two different constants $\zeta_1,\zeta_2\in[0,\infty]$ such that $n'_k \beta_{n'_k}^2 \to \zeta_1$ and $n''_k \beta_{n''_k}^2 \to \zeta_2$ as $k\to\infty$. 
 	Using the preceding results again and our assumption, both the two subsequences of $\big\{\big(   \overline{\Lambda^{(n)}} ,  \overline{N^{(n)}}, \overline{M^{(n)}}, N^{(n)}_1,  M^{(n)}_1, \cdots,   N^{(n)}_K,  M^{(n)}_K \big) \big\}_{n\geq 1}$  along $\{n'_k \}_{k\geq 1}$ and $\{n^{''}_k \}_{n\geq 1}$ converge weakly to the same limit $(\overline{U},\overline{X}, X_1,Z_1, \cdots, X_K, Z_K)$. 
 	By Theorem~\ref{Thm.MainResultSL.03}(1)-(3), the process $(X_1,Z_1, \cdots, X_K, Z_K)$ varies for different $\zeta$ and hence $\zeta_1=\zeta_2$. 
 	\end{enumerate}

We proceed to prove Theorem~\ref{Thm.MainResultSL.03} separately with the cases $\zeta = 0$, $0 < \zeta < \infty$, and $\zeta = \infty$ in Sections~\ref{sec:proof2:I}, \ref{sec:proof2:II} and \ref{sec:proof2:III} respectively.
 As a preparation, we first give some auxiliary moment estimates. The results in Section~\ref{Sec.Proof01} with $d=1$ still hold for  $\big\{\big( \overline{\lambda^{(n)}} ,  \overline{\Lambda^{(n)}} ,  \overline{N^{(n)}}, \overline{M^{(n)}} \big) \big\}_{n\geq 1}$. In particular, for each $T\geq 0$, a direct consequence of Lemma~\ref{Lemma.301} and \eqref{eqn.211.a} shows that there exists a constant $C>0$ such that for any $p\leq 2$ and  $n\geq 1$,
 \beqlb\label{eqn.401}
 \sup_{t\in[0,T]}\mathbf{E}\Big[\big|  \lambda^{(n)}_0(t)\big|^p \Big] 
 =\sup_{t\in[0,T]}\mathbf{E}\Big[\big|   \overline{\lambda^{(n)}}(t)\big|^p \Big] \leq C
 \quad \mbox{and}\quad 
 \mathbf{E}\Big[\big|  \Lambda^{(n)}_0(T)\big|^p \Big] 
 = \mathbf{E}\Big[\big|   \overline{\Lambda^{(n)}}(T)\big|^p \Big] \leq C.
 \eeqlb

 \subsection{Case~I: $n\beta_n^2\to\zeta=0$} 
 \label{sec:proof2:I}
 
The proof is carried out in two steps: we first prove the $C$-tightness of the processes and then we characterize the limit.
 
 \textit{\textbf{$C$-tightness.}} 
 By  Corollary 3.33 in \cite[p.353]{JacodShiryaev2003}, it suffices to prove that for each $1\leq i\leq K$, the sequence $\big\{  (\Lambda^{(n)}_0,  N^{(n)}_i,   M^{(n)}_i)\big\}_{n\geq K}$ is $C$-tight.
 By Proposition~\ref{Prop.303} and \eqref{eqn.211.a}-\eqref{eqn.211.b}, for each $T\geq 0$, there exists a constant $C>0$ such that  for any $h\in (0,1)$ and $n\geq 1$, 
 \beqnn
 \sup_{t\in[0,T]} \mathbf{E}\Big[ \big| \Lambda^{(n)}_0(t+h)-  \Lambda^{(n)}_0(t)\big|^2 \Big] 
 =\sup_{t\in[0,T]} \mathbf{E}\Big[ \big|  \overline{\Lambda^{(n)}}(t+h)-\overline{\Lambda^{(n)}}(t)\big|^2 \Big] 
 \leq C\cdot h^2. 
 \eeqnn 
 Recall also that  $ \sqrt{n}\beta_n\cdot M^{(n)}_i = N^{(n)}_i- \Lambda_0^{(n)}$ and $\sqrt{n}\beta_n\to0$ as $n\to\infty$. 
 By repeating the proofs of Corollary~\ref{Corollary.302}(1) and \ref{Corollary.304}, we also can  prove that as $n\to\infty$,
 \beqlb \label{eqn.418}
 \sup_{t\in[0,T]}\big|  N^{(n)}_i(t)- \Lambda_0^{(n)}(t)\big| \overset{\rm p}\to 0
 \eeqlb 
 and hence the sequence  $\big\{( \Lambda^{(n)}_0,   N^{(n)}_i, M^{(n)}_i)\big\}_{n\geq K}$ is $C$-tight for any $i=1,\cdots,K$.
 
 \textit{\textbf{Characterization.}} 
 Assume that $(\overline{X}, \overline{Z}, X_1,Z_1, \cdots, X_K, Z_K) \in \mathrm{C}\big(\mathbb{R}_+;(\mathbb{R}_+\times \mathbb{R})^{1+K}\big)$ is a limit process along a sub-sequence $\{ n_k \}_{k\geq 1}$. 
 By Theorem~\ref{Thm.MainResultSL.01}(2) with $d=1$, the process $(\overline{X}, \overline{Z})$ is a weak solution to \eqref{eqn.LimitSVE}. 
 It suffices to prove that $\big(X_i, Z_i\big)$ satisfies \eqref{eqn.216b} and $\big\langle B,W_i\big\rangle\equiv 0$ a.s. 
 
 By \eqref{eqn.211.b} and the fact that $ \overline{\Lambda^{(n_k)}}  \to \overline{X}$ weakly in $\mathrm{D}(\mathbb{R}_+;\mathbb{R}_+)$, we have $ \Lambda_0^{(n_k)} \to \overline{X}$ weakly 
 in $\mathrm{D}(\mathbb{R}_+;\mathbb{R}_+)$ as $k\to\infty$. 
 This along with \eqref{eqn.418} implies that 
 \beqnn
 X_i\overset{\rm a.s.}=\overline{X}.
 \eeqnn
 On the other hand, for $1\leq i,j \leq K$ and $k\geq 1$ fixed, we see using \eqref{eqn.211} that the two martingales $M^{(n_k)}_i$ and $M^{(n_k)}_j$ have quadratic co-variation
 \beqlb\label{eqn.415}
 \big[ M^{(n_k)}_i, M^{(n_k)}_j \big] = \mathbf{1}_{\{ i=j \}}\cdot  N^{(n_k)}_i 
  \to \mathbf{1}_{\{ i=j \}}\cdot \overline{X} , 
 \eeqlb
 weakly in $\mathrm{D}(\mathbb{R}_+;\mathbb{R}_+)$ as $k\to\infty$. 
 By Theorem~6.26 in \cite[p.384]{JacodShiryaev2003}, this implies that the limit process $(Z_1,\cdots, Z_K)$ is a continuous $K$-dimensional martingale with predictable quadratic co-variation 
 \beqnn
 \big\langle Z_i, Z_j \big\rangle = \mathbf{1}_{\{ i=j \}}\cdot \overline{X},\quad 1\leq i,j \leq K.
 \eeqnn 
 By the martingale representation theorem; see Theorem~7.1 in \cite[p.84]{IkedaWatanabe1989}, there exists an extension of the original probability space on which is defined a $K$-dimensional Brownian motion $W=(W_1 ,\cdots, W_K)$ such that 
  \beqlb\label{eqn.416}
  Z_i =W_i\circ \overline{X},\quad 1\leq i\leq K.
  \eeqlb
  
 It remains to prove that $\langle B,W_i\rangle\equiv 0$. 
 Using Theorem~6.26 in \cite[p.384]{JacodShiryaev2003} we have 
 \beqnn
 \big[  \overline{M^{(n_k)}},  M^{(n_k)}_i \big]
 \to 
 \big[ \overline{Z},Z_i\big]
 =\big\langle \overline{Z},Z_i\big\rangle,
 \eeqnn
 weakly in $\mathrm{D}(\mathbb{R}_+;\mathbb{R})$ as $k\to\infty$. 
 Since $\big[  M^{n_k}_i,M^{n_k}_j \big]=0$ if $i\neq j$.
 by \eqref{eqn.211.c} and \eqref{eqn.415}  we have  
 \beqnn
 \big[  \overline{M^{(n_k)}},  M^{(n_k)}_i \big] = \frac{1}{\sqrt{n_k} }\cdot  \big[ M^{(n_k)}_i  ,  M^{(n_k)}_i \big],
 \eeqnn
 which converges to $0$ weakly in $\mathrm{D}(\mathbb{R}_+;\mathbb{R})$ and hence $\big\langle \overline{Z},Z_i\big\rangle \equiv 0$ a.s. 
 On the other hand, by \eqref{eqn.216b} and \eqref{eqn.416} we also have 
 \beqnn
 \big\langle \overline{Z},Z_i\big\rangle
 = \big\langle B\circ\overline{X},W_i\circ\overline{X}\big\rangle =\big\langle B,W_i\big\rangle\circ\overline{X} = 0,
 \eeqnn
 which immediately yields $\big\langle B,W_i\big\rangle \equiv 0$ a.s.

 \subsection{Case~II:  $n\beta_n^2\to\zeta\in(0,\infty)$} 
  \label{sec:proof2:II}

This case is more intricate and relies heavily on an auxiliary process introduced to prove conditional independence of the laws of $(X_1,Z_1), \dots, (X_L, Z_L)$. To define this coupling rigorously, we first give a Poisson representation of Hawkes process. 
 
\textit{\textbf{Poisson representation.}}
Applying Theorem~7.4 in \cite[p.90]{IkedaWatanabe1989}, we can suppose up to a probability space enlargement that for each $n\geq K$, there exist some independent Poisson measures $\{ \Pi^n_i(ds,dz) \}_{1\leq i\leq n}$ on $\mathbb{R}_+^2$ with intensity $ds\,dz$ such that 
 \beqlb\label{eqn.410.0}
 N^n_i(t)= \int_0^t \int_0^{\lambda^n_0(s-)} \Pi^n_i(ds,dz)
 \quad \mbox{and}\quad 
 M^n_i(t)= \int_0^t \int_0^{\lambda^n_0(s-)} \widetilde{\Pi}^n_i(ds,dz),\quad t\geq 0,\, 1\leq i\leq n,
 \eeqlb
 where $\widetilde{\Pi}^n_i(ds,dz):=\Pi^n_i(ds,dz)-dsdz$ and where the common intensity  $\lambda^n_0$ is given by 
 \beqlb\label{eqn.410}
 \lambda^n_0(t)=\frac{\mu_0^n}{n}  +  \frac{1}{n}\sum_{j=1}^n \int_0^t\int_0^{\lambda^n_0(s-)} \varphi^n(t-s) \Pi^n_j(ds,dz), \quad t\geq 0 .
 \eeqlb

 \textit{\textbf{Definition of the auxiliary processes.}}
 For each $n \geq K$, we define a $n$-dimensional random point process $\mathcal{N}^n:= (\mathcal{N}^n_1,\cdots,\mathcal{N}^n_n)$ by
 \beqlb\label{eqn.411.0}
 \mathcal{N}^n_i(t) := \int_0^t \int_0^{\theta^n(s-)}\Pi^n_i(ds,dz),\quad t\geq 0,\, 1\leq i\leq n,
 \eeqlb
 where the non-negative process $\theta^n$ is given by
 \beqlb\label{eqn.411}
 \theta^n(t):= \frac{\mu_0^n}{n} + \frac{1}{n} \sum_{j=K+1}^n  \int_0^t \int_0^{\theta^n(s-)}\varphi^n(t-s)\Pi^n_j(ds,dz).
 \eeqlb
In other words, $\mathcal{N}^n$ is $n$-variate Hawkes process where all components have the common intensity $\theta^n$. With matrix notation, the background rate of $\mathcal{N}^n$ is $\mu_0^n/n$ and the kernel is given by
\begin{equation*}
\begin{pmatrix}
\mathbf{0}_{n, K} & \varphi^n \mathbf{1}_{n, n-K}
\end{pmatrix},
\end{equation*}
where $\mathbf{0}_{n, K}$ denotes the $n \times K$ matrix whose components are $0$ and $\mathbf{1}_{n, n-K}$ the the $n \times K$ matrix whose components are $1$. For $1\leq i\leq n$, we write 
 \beqnn
 \Theta^n(t):= \int_0^t \theta^n(s)ds 
 \quad \mbox{and}\quad
 \mathcal{M}^n_i(t):= \mathcal{N}^n_i(t)-\Theta^n(t)=  \int_0^t \int_0^{\theta^n(s-)}\widetilde{\Pi}^n_i(ds,dz), \quad t\geq 0,
 \eeqnn
 for the compensator and compensated point process of $\mathcal{N}^n_i$.  
 Furthermore, we write 
 \beqlb \label{eqn.413.a}
 \overline{\mathcal{N}^n}(t)\ar:=\ar \sum_{i=K+1}^n  \mathcal{N}^n_i(t) = \sum_{i=K+1}^n \int_0^t \int_0^{\theta^n(s-)}\Pi^n_i(ds,dz).
 \eeqlb
Note that unlike $\overline{N^{(n)}}$, the point process $\overline{\mathcal{N}^n}(t)$ is not (in general) a Hawkes process, due to the non-interchangeability of its component. 
However,  it is still an increasing point process and its compensator is given by
 \beqlb
 \overline{\Theta^n}(t)\ar:=\ar (n-K)\cdot \Theta^n(t)=(n-K)\cdot \int_0^t \theta^n(s)ds , \label{eqn.413.b}
  \eeqlb
 and the compensated point process associated to $\overline{\mathcal{N}^n}(t)$ is given by
 \beqlb
 \overline{\mathcal{M}^n}(t)\ar:=\ar\overline{\mathcal{N}^n}(t)-  \overline{\Theta^n}(t)= \sum_{i=K+1}^n  \int_0^t \int_0^{\theta^n(s-)}\widetilde{\Pi}^n_i(ds,dz).\label{eqn.413.c} 
 \eeqlb
 The next proposition is a direct consequence of  the mutual independence among the Poisson random measures $\{\Pi^n_i(ds,dz)\}_{i\geq 1}$. The detailed proof is omitted.
 
 \begin{proposition}\label{Prop.407}
 	For each $n\geq K$ and $1\leq i\leq K$, the process $(\theta^n, \Theta^n,\overline{\mathcal{N}^n},\overline{\Theta^n}, \overline{\mathcal{M}^n})$ is independent of $\Pi^n_i(ds,dz)$. 
 \end{proposition}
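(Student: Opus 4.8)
The plan is to exploit the explicit Poisson representation and the fact that the auxiliary process $\theta^n$ is built using only the measures $\Pi^n_j$ with $K+1\leq j\leq n$. First I would fix $n\geq K$ and $1\leq i\leq K$, and observe that the defining equation \eqref{eqn.411} for $\theta^n$ involves exclusively the Poisson random measures $\{\Pi^n_j(ds,dz)\}_{K+1\leq j\leq n}$ and the deterministic ingredients $\mu_0^n$, $\varphi^n$. Since \eqref{eqn.411} is a Volterra-type stochastic fixed point equation whose solution is pathwise unique (this is the standard existence/uniqueness for Hawkes dynamics via a Picard iteration on finite horizons, as recalled in Appendix~\ref{Sec.AppendixHawkes}), the process $\theta^n$ is a measurable functional of $\{\Pi^n_j\}_{K+1\leq j\leq n}$ alone. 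Consequently $\Theta^n(t)=\int_0^t\theta^n(s)\,ds$ is also $\sigma\big(\Pi^n_j:\,K+1\leq j\leq n\big)$-measurable.

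Next I would treat $\overline{\mathcal{N}^n}$, $\overline{\Theta^n}$ and $\overline{\mathcal{M}^n}$. By \eqref{eqn.413.a}, $\overline{\mathcal{N}^n}(t)=\sum_{j=K+1}^n\int_0^t\int_0^{\theta^n(s-)}\Pi^n_j(ds,dz)$, which is a functional of $\theta^n$ together with $\{\Pi^n_j\}_{K+1\leq j\leq n}$; by the previous step $\theta^n$ is itself such a functional, so $\overline{\mathcal{N}^n}$ is $\sigma\big(\Pi^n_j:\,K+1\leq j\leq n\big)$-measurable. The same reasoning applies verbatim to $\overline{\Theta^n}=(n-K)\Theta^n$ and to $\overline{\mathcal{M}^n}=\overline{\mathcal{N}^n}-\overline{\Theta^n}$. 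Hence the entire tuple $(\theta^n,\Theta^n,\overline{\mathcal{N}^n},\overline{\Theta^n},\overline{\mathcal{M}^n})$ is measurable with respect to $\mathscr{G}^n:=\sigma\big(\Pi^n_j:\,K+1\leq j\leq n\big)$.

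Finally I would invoke the mutual independence of the Poisson random measures $\{\Pi^n_j\}_{1\leq j\leq n}$, which holds by construction in the representation \eqref{eqn.410.0}. Since $i\leq K<K+1$, the measure $\Pi^n_i(ds,dz)$ is independent of the whole family $\{\Pi^n_j\}_{K+1\leq j\leq n}$, hence independent of $\mathscr{G}^n$. Combining this with the $\mathscr{G}^n$-measurability established above yields that $\Pi^n_i(ds,dz)$ is independent of $(\theta^n,\Theta^n,\overline{\mathcal{N}^n},\overline{\Theta^n},\overline{\mathcal{M}^n})$, which is the claim. The only point requiring minor care — and the main (though modest) obstacle — is to justify cleanly that the solution map of the Volterra fixed point equation \eqref{eqn.411} is measurable with respect to the driving subfamily of Poisson measures; this follows from the pathwise Picard construction of Hawkes processes and the fact that each Picard iterate is manifestly a measurable functional of $\{\Pi^n_j\}_{K+1\leq j\leq n}$, the property being preserved under the locally uniform limit.
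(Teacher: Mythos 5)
Your proof is correct and follows exactly the route the paper indicates: the paper omits the detailed argument, stating only that the proposition is ``a direct consequence of the mutual independence among the Poisson random measures,'' and your write-up supplies precisely the missing measurability step, namely that $\theta^n$ (hence the whole tuple) is a measurable functional of $\{\Pi^n_j\}_{K+1\leq j\leq n}$ alone. No gaps; the Picard/pathwise-construction justification you flag at the end is the standard way to make this rigorous.
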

 
 This means that $\mathcal{N}^{n}_1,\cdots, \mathcal{N}^{n}_K$ can be seen as Cox processes indexed by the random measure generated by the non-decreasing process $\overline{\Theta^n}$. Moreover, conditioned on $(\theta^n, \Theta^n,\overline{\mathcal{N}^n},\overline{\Theta^n}, \overline{\mathcal{M}^n})$, the processes $\mathcal{N}^{n}_1,\cdots, \mathcal{N}^{n}_K$ are mutually independent. Therefore, we obtain the following. 
 
 \begin{corollary}\label{Coro.411}
 	There exist mutually independent Poisson processes $N_1^\circ,\cdots,N^\circ_K$ with rate $1$ that are independent of $\{(\theta^n, \Theta^n,\overline{\mathcal{N}^n},\overline{\Theta^n}, \overline{\mathcal{M}^n})\}_{n\geq 1}$ such that for any $n\geq K$,
 	\beqnn
 	\mathcal{N}^{n}_i \overset{\rm d}= N_i^\circ\circ \Theta^{n} 
 	\quad \mbox{and}\quad 
 	\mathcal{M}^{n}_i  \overset{\rm d}= \widetilde{N}^\circ_i\circ \Theta^{n} ,\quad 1\leq i\leq K,
 	\eeqnn
 	where $\widetilde{N}^\circ_i(t) := N^\circ_0(t) -t$. 
 \end{corollary}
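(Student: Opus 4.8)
\textbf{Proof plan for Corollary~\ref{Coro.411}.}
The plan is to build the Poisson processes $N^\circ_1,\dots,N^\circ_K$ by a standard time-change (Cox) argument, exploiting the independence already recorded in Proposition~\ref{Prop.407}. First I would fix $n\geq K$ and work conditionally on the $\sigma$-algebra $\mathscr{G}^n$ generated by $(\theta^n,\Theta^n,\overline{\mathcal{N}^n},\overline{\Theta^n},\overline{\mathcal{M}^n})$ together with the Poisson measures $\{\Pi^n_j\}_{j\geq K+1}$ that define $\theta^n$ via \eqref{eqn.411}. By Proposition~\ref{Prop.407} the measures $\Pi^n_1,\dots,\Pi^n_K$ are independent of $\mathscr{G}^n$, and they are mutually independent; hence, conditionally on $\mathscr{G}^n$, the process $\theta^n$ is a \emph{deterministic} non-negative càdlàg function, and from \eqref{eqn.411.0}--\eqref{eqn.411} each $\mathcal{N}^n_i(t)=\int_0^t\int_0^{\theta^n(s-)}\Pi^n_i(ds,dz)$ is an inhomogeneous Poisson process with (conditionally deterministic) intensity $\theta^n(s)$, and these are conditionally independent across $1\leq i\leq K$.

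Next I would apply the classical time-change theorem for inhomogeneous Poisson processes: if $P$ is a Poisson process with deterministic cumulative intensity $A(t)=\int_0^t\theta^n(s)\,ds=\Theta^n(t)$, then $P\overset{\rm d}=N^\circ\circ A$ for a rate-$1$ Poisson process $N^\circ$ (see e.g. Theorem~7.4 in \cite[p.90]{IkedaWatanabe1989}, the same representation tool already used above). Doing this simultaneously for $i=1,\dots,K$, and using conditional independence, yields rate-$1$ Poisson processes $N^\circ_1,\dots,N^\circ_K$, conditionally independent given $\mathscr{G}^n$, with $\mathcal{N}^n_i\overset{\rm d}=N^\circ_i\circ\Theta^n$. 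To make the $N^\circ_i$ genuinely independent of $\mathscr{G}^n$ (and not merely conditionally so) one uses that a rate-$1$ Poisson process obtained this way has a law not depending on the conditioning variable; formally, after enlarging the probability space one can realize $N^\circ_1,\dots,N^\circ_K$ as i.i.d.\ rate-$1$ Poisson processes independent of $\mathscr{G}^n$ such that the displayed identities in law hold jointly. Since $\mathcal{M}^n_i(t)=\mathcal{N}^n_i(t)-\Theta^n(t)$ and $\widetilde N^\circ_i(t)=N^\circ_i(t)-t$, composing gives $\mathcal{M}^n_i\overset{\rm d}=\widetilde N^\circ_i\circ\Theta^n$ on the same space, as claimed.

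Finally, to get the statement uniformly in $n\geq K$ with a \emph{single} family $N^\circ_1,\dots,N^\circ_K$ independent of the whole sequence $\{(\theta^n,\Theta^n,\overline{\mathcal{N}^n},\overline{\Theta^n},\overline{\mathcal{M}^n})\}_{n\geq 1}$, I would note that for each $n$ the construction above only requires i.i.d.\ rate-$1$ Poisson processes independent of $\mathscr{G}^n$, and the marginal law of $(N^\circ_1,\dots,N^\circ_K)$ is the same for every $n$; thus one fixes once and for all $K$ i.i.d.\ rate-$1$ Poisson processes on an enlarged space that are independent of the entire collection $\{\Pi^n_i\}_{n,i}$ (hence of all the auxiliary processes), and the identities in distribution hold for each $n$ by the conditional argument. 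The main obstacle — really the only delicate point — is the bookkeeping needed to upgrade ``conditional independence given $\mathscr{G}^n$'' to ``genuine independence of $\mathscr{G}^n$'' and to do so consistently across all $n$; this is the reason for the probability-space enlargement, and it is handled by the same disintegration/transfer argument (Theorem~7.4 in \cite[p.90]{IkedaWatanabe1989}) invoked for the Poisson representation at the start of this subsection. \qed
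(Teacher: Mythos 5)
Your proposal is correct and follows essentially the same route as the paper, which states the corollary as a direct consequence of Proposition~\ref{Prop.407}: conditionally on $(\theta^n,\Theta^n,\overline{\mathcal{N}^n},\overline{\Theta^n},\overline{\mathcal{M}^n})$ the processes $\mathcal{N}^n_1,\dots,\mathcal{N}^n_K$ are independent Cox processes directed by $\Theta^n$, whence the time-change representation by i.i.d.\ rate-$1$ Poisson processes. Your additional care in upgrading conditional independence to genuine independence on an enlarged space, uniformly in $n$, is exactly the bookkeeping the paper leaves implicit.
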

 
The rest of the proof essentially consist in establishing the asymptotic behavior of $\Theta^n$, studying the asymptotic behavior of $\mathcal{N}^{n}_1,\cdots, \mathcal{N}^{n}_K$ and proving that the asymptotic behaviour of $\mathcal{N}^{n}_1,\cdots, \mathcal{N}^{n}_K$ and $N^{n}_1,\cdots, \mathcal{N}^{n}_K$ is the same. We start with a comparison result between the initial Hawkes process and our auxiliary process.
 
 \begin{proposition}\label{Prop.401}
 	For each $n\geq K$ and $1\leq i\leq n$,  we have almost surely
 	\beqnn
 	\lambda^n_0(t)\geq \theta^n(t), \quad 
 	N^n_i(t) \geq  \mathcal{N}^n_i(t)
 	\quad \mbox{and} \quad 
 	\overline{N^n}(t) \geq \overline{\mathcal{N}^n}(t),
 	\quad t\geq 0.
 	\eeqnn 
 \end{proposition}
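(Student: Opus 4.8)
The plan is to exploit that $\lambda^n_0$ and $\theta^n$ are the unique solutions of the same type of fixed-point equation, \eqref{eqn.410} and \eqref{eqn.411}, driven by the common Poisson measures $\Pi^n_1,\dots,\Pi^n_n$, with the sole difference that $\theta^n$ discards the $K$ measures $\Pi^n_1,\dots,\Pi^n_K$. Since the kernel $\varphi^n$ is non-negative, removing these (non-negative) contributions can only decrease the solution; because the unknown intensity enters the upper limit of the stochastic integrals, I would propagate this comparison along a Picard approximation rather than comparing $\lambda^n_0$ and $\theta^n$ directly.

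First I would introduce, for $n\geq K$, the iterates $\lambda^{n,0}\equiv\theta^{n,0}\equiv\mu_0^n/n$ and, for $m\geq 0$,
\beqnn
\lambda^{n,m+1}(t)\ar=\ar\frac{\mu_0^n}{n}+\frac{1}{n}\sum_{j=1}^n\int_0^t\int_0^{\lambda^{n,m}(s-)}\varphi^n(t-s)\,\Pi^n_j(ds,dz),\\
\theta^{n,m+1}(t)\ar=\ar\frac{\mu_0^n}{n}+\frac{1}{n}\sum_{j=K+1}^n\int_0^t\int_0^{\theta^{n,m}(s-)}\varphi^n(t-s)\,\Pi^n_j(ds,dz),\quad t\geq 0.
\eeqnn
Both update maps are monotone in their argument (again because $\varphi^n\geq 0$), so each sequence is non-decreasing in $m$ and dominated by the corresponding true intensity; hence $\lambda^{n,m}\uparrow\lambda^n_0$ and $\theta^{n,m}\uparrow\theta^n$ pointwise and almost surely, by monotone convergence together with the uniqueness of the solutions to \eqref{eqn.410}--\eqref{eqn.411} recalled in Appendix~\ref{Sec.AppendixHawkes}. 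The central step is then the induction claim $\lambda^{n,m}(t)\geq\theta^{n,m}(t)$ for all $t\geq 0$ and $m\geq 0$: it is an equality when $m=0$, and if it holds at level $m$ then, for each shared index $j\in\{K+1,\dots,n\}$, the inner integration set $\{z\leq\lambda^{n,m}(s-)\}$ contains $\{z\leq\theta^{n,m}(s-)\}$ while the integrand $\varphi^n(t-s)$ is non-negative, so that summand is at least as large in the $\lambda$-recursion; moreover the $\lambda$-recursion carries the additional non-negative terms $j=1,\dots,K$. Hence $\lambda^{n,m+1}\geq\theta^{n,m+1}$, and letting $m\to\infty$ gives $\lambda^n_0(t)\geq\theta^n(t)$ almost surely for all $t\geq 0$.

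Granting the intensity comparison, the two remaining inequalities are immediate and pathwise. Since $\Pi^n_i(ds,dz)$ is a non-negative measure and $\lambda^n_0(s-)\geq\theta^n(s-)$, comparing the thinning representations \eqref{eqn.410.0} and \eqref{eqn.411.0} yields $N^n_i(t)\geq\mathcal{N}^n_i(t)$ for every $1\leq i\leq n$; summing over $i=K+1,\dots,n$ and adding the further non-negative increments of $N^n_1,\dots,N^n_K$ gives $\overline{N^n}(t)=\sum_{i=1}^n N^n_i(t)\geq\sum_{i=K+1}^n\mathcal{N}^n_i(t)=\overline{\mathcal{N}^n}(t)$, as claimed.

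I expect the only genuine difficulty to be the approximation bookkeeping: verifying that the Picard iterates are well defined (only finitely many points of each $\Pi^n_j$ fall below the iterates on a compact interval), that the sequences are indeed non-decreasing and dominated, and that one may pass almost surely to the limit inside the Poisson stochastic integrals in order to identify the limits with $\lambda^n_0$ and $\theta^n$. The monotone-coupling induction itself uses nothing beyond $\varphi^n\geq 0$ and is routine.
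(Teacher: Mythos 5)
Your proof is correct, but it takes a genuinely different route from the paper. The paper argues directly on the processes themselves: it sets $\tau_+:=\inf\{t>0:\lambda^n_0(t)<\theta^n(t)\}$, observes that up to $\tau_+$ every jump of $\mathcal{N}^n_j$ is also a jump of $N^n_j$, and derives a contradiction from a strict crossing by constructing an infinite sequence of jump times accumulating at a finite point. Your argument instead couples the two intensities through the monotone Picard scheme: both $\lambda^n_0$ and $\theta^n$ are realized as increasing limits of iterates of order-preserving maps driven by the same Poisson measures, the termwise comparison $\lambda^{n,m}\geq\theta^{n,m}$ is an immediate induction (the $\theta$-map simply drops the non-negative contributions of $\Pi^n_1,\dots,\Pi^n_K$ and integrates over a smaller region in $z$), and the inequality passes to the limit. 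What your route buys is robustness and transparency: the induction uses nothing beyond $\varphi^n\geq 0$ and monotone convergence, and it sidesteps the paper's reliance on pathwise regularity of the intensities (the paper invokes left-continuity of $\lambda^n_0$ and $\theta^n$, which is delicate for a general kernel $\varphi^n\in L^1_{\rm loc}$) as well as the somewhat intricate accumulation-of-jumps contradiction. The price is that you must identify the monotone limits with $\lambda^n_0$ and $\theta^n$, which requires pathwise uniqueness for the Poisson-driven equations \eqref{eqn.410} and \eqref{eqn.411}; note that domination of the iterates by the true solutions only shows the limits are the \emph{minimal} solutions, so uniqueness is genuinely needed (at least on the $\theta$ side, since the chain $\lambda^n_0\geq\lambda^{n,\infty}\geq\theta^{n,\infty}$ must be completed by $\theta^{n,\infty}=\theta^n$). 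This uniqueness follows from the standard Gr\"onwall--Volterra estimate in expectation using the resolvent $\psi^n$, exactly as in the well-posedness results of Delattre--Fournier--Hoffmann, so the gap is one of citation rather than substance; it would be worth stating that step explicitly rather than leaving it in the closing ``bookkeeping'' remark.
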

 \proof Let $\tau_{0}=0$ and $\tau_{k}:= \inf\{t\geq \tau_{k-1}: N_i(t)-N_i(t-)= 1 \mbox{ for some } 1\leq i\leq K\}$ for $k\geq 1$.  
 Comparing the right-hand sides of \eqref{eqn.410} and \eqref{eqn.411}, we see that 
 \beqnn
 \lambda^n_0(t) \overset{\rm a.s.}= \theta^n(t)
 \quad \mbox{and}\quad 
 N^n_i(t)\overset{\rm a.s.}= \mathcal{N}^n_i(t),\quad t< \tau_1,\, 1\leq i \leq n.
 \eeqnn
We then define $\tau_+:= \inf\{t > 0:  \lambda^n_0(t)< \theta^n(t) \}$ with the convention that $\inf \emptyset = + \infty$. We already know that $\tau_+ \geq \tau_1 > 0$ a.s. and we want to prove that $\tau_+=\infty$. Suppose that $\tau_+ < \infty$. For $t < \tau_+$, we have $\theta^n(t) \leq \lambda^n_0(t)$ and by left-continuity of $\theta^n$ and $\lambda^n_0$, we also have $\theta^n(\tau_+) \leq \lambda^n_0(\tau_+)$. Therefore, by \eqref{eqn.410.0} and \eqref{eqn.411.0}, we know that the jump times of $\mathcal{N}^{n}_{1}, \dots, \mathcal{N}^{n}_n$ on $[0, \tau_+]$ are necessarily jump times of $N^{n}_1, \dots, N^{n}_K$ on $[0, \tau_+]$. For $t > \tau_+$, this implies that
\begin{align*}
\lambda^n_0(t) 
&= 
\frac{1}{n} \Big( \mu^n_0 + \sum_{j=1}^n \int_{[0,\tau]} \varphi^n(t-s) \,dN^n_j(s) + \sum_{j=1}^n \int_{(\tau,t]} \varphi^n(t-s) \,dN^n_j(s) \Big)
\\
&\geq
\frac{1}{n} \Big( \mu^n_0 + \sum_{j=K+1}^n \int_{[0,\tau]} \varphi^n(t-s) \,d\mathcal{N}^n_j(s) + \sum_{j=K+1}^n \int_{(\tau,t]} \varphi^n(t-s) \,dN^n_j(s) \Big)
\\
&\geq
\widetilde{\lambda}^n(t) + \frac{1}{n} \sum_{j=K+1}^n \Big(  \int_{(\tau,t]} \varphi^n(t-s) \,d N^n_j(s) - \int_{(\tau,t]} \varphi^n(t-s) \,d \mathcal{N}^n_j(s) \Big)
\end{align*}

By definition of $\tau_+$, there exists $t_1 > \tau$ such that $\lambda^n(t_1) < \widetilde{\lambda}^n(t_1)$. Using the last inequality we know that there exist $K+ 1 \leq j_1 \leq n$ and $\tau_+ < s_1 < t_1$ such that $\mathcal{N}^n_{j_1}$ jumps at $s_1$ and not $N^n_{j_1}$. By definition of $\tau$, we can also find $\tau_+ < t_2 < s_1$ such that $\lambda^n(t_2) < \widetilde{\lambda}^n(t_2)$. Then we can build iteratively the sequences $(t_k)_k$ and $(s_k)_k$ such that $\tau_+ < t_{k+1} < s_k < \tau_k$ and such that for all $k$, there exists $K+ 1 \leq j_k \leq n$ such that $\mathcal{N}^n_{j_k}$ jumps at time $s_k$. This implies that the jumps of the Hawkes process $(\widetilde{N}^n_{j})_{j}$ admits a finite accumulation point. This is impossible (a.s.) so $\tau_+ = \infty$ a.s. This proves that $\lambda_0^n(t) \geq \theta^n(t)$ for all $t \geq 0$. The two other inequalities follow directly from \eqref{eqn.410.0} and \eqref{eqn.411.0}.
  \qed 
 
 We now prove that as $n\to\infty$, the process $(\overline{\Lambda^{(n)}},\overline{N^{(n)}}, \overline{M^{(n)}},  N^{(n)}_1, M^{(n)}_1,\cdots, N^{(n)}_K, M^{(n)}_K)$ is asymptotically equivalent to 
 $( \overline{\Theta^{(n)}},\overline{\mathcal{N}^{(n)}}, \overline{\mathcal{M}^{(n)}},  \mathcal{N}^{(n)}_1, \mathcal{M}^{(n)}_1,\cdots, \mathcal{N}^{(n)}_K, \mathcal{M}^{(n)}_K)$ with 
 \beqnn
  \quad \overline{\Theta^{(n)}}:= \beta_n^2\cdot\overline{\Theta^n} ,\quad  \overline{\mathcal{N}^{n}} :=\beta_n^2\cdot\overline{\mathcal{N}^n} , \quad 
  \overline{\mathcal{M}^{n}} := \beta_n\cdot\overline{\mathcal{M}^n} ,   \quad 
   \mathcal{N}^{(n)}_i :=n\beta_n^2\cdot\mathcal{N}^n_i , \quad
 \mathcal{M}^{(n)}_i := \sqrt{n}\beta_n\cdot\mathcal{M}^n_i . 
 \eeqnn
To that extent, we first show that $n\beta_n^2\cdot\lambda^n_0 $ and $n\beta_n^2\cdot\theta^n$ are asymptotically equivalent as $n\to\infty$. 
  
 \begin{proposition}\label{Prop.403}
 	For each $T\geq 0$, there exists a constant $C>0$ such that for any $n\geq K$,
 	\beqlb\label{eqn.408}
 	\sup_{t\in[0,T]}n\beta_n \cdot \mathbf{E}\Big[\big| n\beta_n^2\cdot\lambda^n_0(t)-n\beta_n^2\cdot\theta^n(t) \big|\Big]
 	+ n\beta_n \cdot \mathbf{E}\Big[	\sup_{t\in[0,T]}\big| \overline{\Lambda^{(n)}}(t)-\overline{\Theta^{(n)}}(t) \big|\Big] \leq C. 
 	\eeqlb 
 \end{proposition}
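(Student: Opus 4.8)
The plan is to use the comparison $\lambda^n_0(t) \geq \theta^n(t)$ from Proposition~\ref{Prop.401}, which removes the absolute value in \eqref{eqn.408}, and then to control the non-negative function $g^n(t) := \mathbf{E}\big[\lambda^n_0(t) - \theta^n(t)\big]$ by a renewal-type inequality. Taking expectations in \eqref{eqn.410} and \eqref{eqn.411} and applying the compensation formula for the Poisson measures $\{\Pi^n_j\}$ (all integrands are non-negative, so the identities remain valid even if some expectations are a priori infinite), one obtains
\[
\mathbf{E}[\lambda^n_0] = \frac{\mu^n_0}{n} + \varphi^n * \mathbf{E}[\lambda^n_0]
\quad\text{and}\quad
\mathbf{E}[\theta^n] = \frac{\mu^n_0}{n} + \frac{n-K}{n}\,\varphi^n * \mathbf{E}[\theta^n].
\]
Subtracting these and rearranging gives
\[
g^n = \frac{K}{n}\,\varphi^n * \mathbf{E}[\lambda^n_0] + \frac{n-K}{n}\,\varphi^n * g^n,
\]
which exhibits the effect of removing $K$ of the $n$ particles as a forcing term of relative size $K/n$.

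The next step is to solve this equation by resolvents. Let $R^n := \sum_{k \geq 1}\big(\tfrac{n-K}{n}\varphi^n\big)^{*k}$ be the resolvent of the second kind of $\tfrac{n-K}{n}\varphi^n$; since $0 \leq \tfrac{n-K}{n} \leq 1$ and $\varphi^n \geq 0$, one has $0 \leq R^n \leq \psi^n$ pointwise. Because the forcing term $\tfrac{K}{n}\varphi^n * \mathbf{E}[\lambda^n_0]$ is non-negative, the unique locally integrable solution $g^n = \tfrac{K}{n}\varphi^n*\mathbf{E}[\lambda^n_0] + R^n*\big(\tfrac{K}{n}\varphi^n*\mathbf{E}[\lambda^n_0]\big)$ satisfies
\[
0 \leq g^n \leq \frac{K}{n}\big(\varphi^n + \psi^n * \varphi^n\big) * \mathbf{E}[\lambda^n_0] = \frac{K}{n}\,\psi^n * \mathbf{E}[\lambda^n_0],
\]
the last equality being the resolvent equation \eqref{eqn.201}. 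This is the key estimate; the remainder is a matching of powers of $n$ and $\beta_n$.

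To conclude, I would insert the a priori bounds. By \eqref{eqn.401} with $p = 1$, $\mathbf{E}[\lambda^n_0(s)] = (n\beta_n^2)^{-1}\,\mathbf{E}[n\beta_n^2\lambda^n_0(s)] \leq C/(n\beta_n^2)$ uniformly on $[0,T]$, while Condition~\ref{Main.Condition.01} with $d = 1$ together with the Cauchy--Schwarz inequality give $\beta_n\|\psi^n\|_{L^1_T} \leq \sqrt{T}\,\beta_n\|\psi^n\|_{L^2_T} \leq C$. Hence, for $t \in [0,T]$,
\[
g^n(t) \leq \frac{K}{n}\cdot\frac{C}{n\beta_n^2}\,\|\psi^n\|_{L^1_T} \leq \frac{C}{n^2\beta_n^3},
\]
so that $n\beta_n \cdot \mathbf{E}\big[\,\big|n\beta_n^2\lambda^n_0(t) - n\beta_n^2\theta^n(t)\big|\,\big] = n^2\beta_n^3\, g^n(t) \leq C$, which is the first term in \eqref{eqn.408}. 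For the second term, \eqref{eqn.211.b} and \eqref{eqn.413.b} give $\overline{\Lambda^{(n)}}(t) - \overline{\Theta^{(n)}}(t) = \int_0^t\big(n\beta_n^2(\lambda^n_0(s) - \theta^n(s)) + K\beta_n^2\theta^n(s)\big)\,ds$, a non-decreasing non-negative process, so its supremum over $[0,T]$ is attained at $t = T$; taking expectations and multiplying by $n\beta_n$ produces $\int_0^T n^2\beta_n^3\, g^n(s)\,ds + K\int_0^T n\beta_n^3\,\mathbf{E}[\theta^n(s)]\,ds$. The first integral is $\leq CT$ by what precedes, and using $\theta^n \leq \lambda^n_0$ together with $\mathbf{E}[\lambda^n_0(s)] \leq C/(n\beta_n^2)$ the second is $\leq KCT\beta_n \leq KCT$; this proves \eqref{eqn.408}.

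The only genuine obstacle is the inequality $g^n \leq \tfrac{K}{n}\,\psi^n * \mathbf{E}[\lambda^n_0]$: one must recognize that $\theta^n$ solves the same convolution equation as $\lambda^n_0$ but with the sub-stochastic factor $\tfrac{n-K}{n}$, so that the discrepancy propagates through a resolvent dominated pointwise by $\psi^n$. Once this is in place, the three scales --- a forcing of relative order $K/n$, an intensity of order $(n\beta_n^2)^{-1}$, and an $L^1$-resolvent of order $\beta_n^{-1}$ --- combine to exactly absorb the prefactor $n\beta_n$, up to the harmless $O(\beta_n)$ contribution coming from the $K\beta_n^2\theta^n$ term.
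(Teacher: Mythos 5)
Your proposal is correct and follows essentially the same route as the paper: take expectations in \eqref{eqn.410}--\eqref{eqn.411}, observe that $g^n=\mathbf{E}[\lambda^n_0-\theta^n]$ satisfies a linear Volterra equation with a forcing term of relative size $K/n$, invert it via the resolvent to reach $g^n\leq \tfrac{K}{n}\,\psi^n*\mathbf{E}[\lambda^n_0]$, and combine $\mathbf{E}[n\beta_n^2\lambda^n_0]\leq C$ with $\beta_n\|\psi^n\|_{L^1_T}\leq C$ (the paper solves the equation with $\psi^n$ directly and the forcing $\tfrac{K}{n}\varphi^n*\mathbf{E}[\theta^n]$, whereas you use the resolvent of $\tfrac{n-K}{n}\varphi^n$ plus the domination $R^n\leq\psi^n$ — a cosmetic difference). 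Your treatment of the second term is in fact slightly more careful than the paper's, since you account explicitly for the $(n-K)$ versus $n$ normalization mismatch between $\overline{\Theta^{(n)}}$ and $\overline{\Lambda^{(n)}}$, which contributes the harmless $O(\beta_n)$ term $K\beta_n^2\theta^n$.
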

 \proof Taking expectations on both sides of \eqref{eqn.410} and \eqref{eqn.411}, we have 
 \beqnn
 \mathbf{E}\big[ \lambda^n_0(t)-\theta^n(t) \big]  
 \ar=\ar  \int_0^t  \varphi^n(t-s)\mathbf{E}\big[ \lambda^n_0(s) \big]ds - \frac{n-L}{n}\int_0^t  \varphi^n(t-s)\mathbf{E}\big[ \theta^n(s) \big]ds\cr
 \ar=\ar \int_0^t  \varphi^n(t-s)\mathbf{E}\big[ \lambda^n_0(s)-\theta^n(s) \big]ds + \frac{L}{n}\int_0^t \varphi^n(t-s)\mathbf{E}\big[ \theta^n(s) \big]ds.
 \eeqnn
Therefore, $y(t) = \mathbf{E}[ \lambda^n_0(t)-\theta^n(t) ]$ satisfies a Volterra integral equation which can be solved explicitely using the the resolvent $\psi^n$ associated to $\varphi^n$ defined in \eqref{eqn.201}, see for instance Lemma~3 in \cite{BacryDelattreHoffmannMuzy2013}. Using then Fubini's theorem as well as the second equality in \eqref{eqn.201}, we get
 \beqnn
 \mathbf{E}\big[ \lambda^n_0(t)-\theta^n(t) \big]
 \ar=\ar\frac{L}{n}\int_0^t \varphi^n(t-s)\mathbf{E}\big[ \theta^n(s) \big]ds + \frac{L}{n}\int_0^t \psi^n(t-s) \int_0^s\varphi^n(s-r)\mathbf{E}\big[ \theta^n(r) \big]drds\cr
 \ar=\ar \frac{L}{n}\int_0^t \varphi^n(t-s)\mathbf{E}\big[ \theta^n(s) \big]ds  + \frac{L}{n}\int_0^t  \int_0^{t-s}\psi^n(t-s-r)\varphi^n(r)dr\mathbf{E}\big[ \theta^n(s) \big]ds\cr
 \ar=\ar \frac{L}{n}\int_0^t \psi^n(t-s)\mathbf{E}\big[ \theta^n(s) \big]ds .
 \eeqnn
 Using also Proposition~\ref{Prop.401} and Equation \eqref{eqn.401}, we have 
 \beqnn
 \sup_{t\in[0,T]}n\beta_n \cdot \mathbf{E}\Big[\big| n\beta_n^2\cdot\lambda^n_0(t)-n\beta_n^2\cdot\theta^n(t) \big|\Big] \leq 	  \int_0^T \beta_n \psi^n(s)ds \cdot \sup_{t\in[0,T]}\mathbf{E}\big[ n\beta_n^2\lambda^n_0(t)  \big] \leq C
 \eeqnn
 uniformly in $n\geq 1$. The uniform upper bound for $n\beta_n \cdot \mathbf{E}\big[	\sup_{t\in[0,T]}\big| \overline{\Lambda^{(n)}}(t)-\overline{\Theta^{(n)}}(t) \big|\big]$ follows directly from the previous result and the fact that 
 \beqnn
 \sup_{t\in[0,T]}\big| \overline{\Lambda^{(n)}}(t)-\overline{\Theta^{(n)}}(t) \big| 
 \leq \int_0^T \big| n\beta_n^2\cdot\lambda^n_0(t)-n\beta_n^2\cdot\theta^n(t) \big|dt. 
 \eeqnn
 \qed 
 
 \begin{proposition} \label{Prop.404}
 	For each $T\geq 0$, we have as $n\to\infty$,
 	\beqlb\label{eqn.412}
 	\mathbf{E}\bigg[ \sup_{t\in[0,T]} \Big| \overline{N^{(n)}}(t) - \overline{\mathcal{N}^{(n)}}(t)  \Big| \bigg] +  \mathbf{E}\bigg[ \sup_{t\in[0,T]} \Big| \overline{M^{(n)}}(t) - \overline{\mathcal{M}^{(n)}}(t)  \Big|^2 \bigg]  \to 0, 
 	\eeqlb
 	and for $1\leq i\leq K$, 
 	\beqlb \label{eqn.412.1}
 	\mathbf{E}\bigg[ \sup_{t\in[0,T]} \Big| N_i^{(n)}(t) - \mathcal{N}_i^{(n)}(t)  \Big| \bigg] +  \mathbf{E}\bigg[ \sup_{t\in[0,T]} \Big| M^{(n)}_i(t) -  \mathcal{M}^{(n)}_i(t)  \Big|^2 \bigg]  \to 0. 
 	\eeqlb
 \end{proposition}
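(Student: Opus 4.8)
The plan is to carry everything out within the Poisson representation \eqref{eqn.410.0}--\eqref{eqn.411.0}, in which the auxiliary particle $\mathcal{N}^n_i$ and the original Hawkes particle $N^n_i$ are driven by the \emph{same} Poisson random measure $\Pi^n_i$. Since $\lambda^n_0(s)\geq\theta^n(s)$ for all $s$, a.s., by Proposition~\ref{Prop.401}, we can write $N^n_i-\mathcal{N}^n_i=\int_0^{\cdot}\!\int_{\theta^n(s-)}^{\lambda^n_0(s-)}\Pi^n_i(ds,dz)$, which is a non-negative, non-decreasing pure-jump process for every $1\leq i\leq n$; hence $\overline{N^n}-\overline{\mathcal{N}^n}=\sum_{i\leq K}N^n_i+\sum_{i>K}(N^n_i-\mathcal{N}^n_i)$ is non-negative and non-decreasing as well. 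After the relevant rescaling the same monotonicity holds for $N^{(n)}_i-\mathcal{N}^{(n)}_i$ and $\overline{N^{(n)}}-\overline{\mathcal{N}^{(n)}}$, so their suprema over $[0,T]$ are attained at $t=T$. Because the compensated processes are genuine martingales (thanks to the moment bound \eqref{eqn.401} and Proposition~\ref{Prop.401}), taking expectations gives $\mathbf{E}\big[\sup_{t\leq T}|N^{(n)}_i(t)-\mathcal{N}^{(n)}_i(t)|\big]=\int_0^T\mathbf{E}\big[n\beta_n^2\lambda^n_0(s)-n\beta_n^2\theta^n(s)\big]\,ds$ and $\mathbf{E}\big[\sup_{t\leq T}|\overline{N^{(n)}}(t)-\overline{\mathcal{N}^{(n)}}(t)|\big]=\mathbf{E}\big[\overline{\Lambda^{(n)}}(T)-\overline{\Theta^{(n)}}(T)\big]$.

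For the martingale terms I would read the predictable quadratic variations off the same representation: $M^n_i-\mathcal{M}^n_i=\int_0^{\cdot}\!\int_{\theta^n(s-)}^{\lambda^n_0(s-)}\widetilde{\Pi}^n_i(ds,dz)$ has angle bracket $\int_0^{\cdot}(\lambda^n_0(s)-\theta^n(s))\,ds$, and summing the analogous Poisson stochastic integrals over $i$ yields $\langle\overline{M^n}-\overline{\mathcal{M}^n}\rangle_t=\overline{\Lambda^n}(t)-\overline{\Theta^n}(t)$. Doob's $L^2$ maximal inequality, applied after rescaling, then bounds $\mathbf{E}\big[\sup_{t\leq T}|M^{(n)}_i(t)-\mathcal{M}^{(n)}_i(t)|^2\big]$ by $4\int_0^T\mathbf{E}\big[n\beta_n^2\lambda^n_0(s)-n\beta_n^2\theta^n(s)\big]\,ds$ and $\mathbf{E}\big[\sup_{t\leq T}|\overline{M^{(n)}}(t)-\overline{\mathcal{M}^{(n)}}(t)|^2\big]$ by $4\,\mathbf{E}\big[\overline{\Lambda^{(n)}}(T)-\overline{\Theta^{(n)}}(T)\big]$. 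Thus all four quantities appearing in \eqref{eqn.412}--\eqref{eqn.412.1} are controlled by the same two expressions.

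It then suffices to let $n\to\infty$. Proposition~\ref{Prop.403} gives $\mathbf{E}\big[n\beta_n^2\lambda^n_0(s)-n\beta_n^2\theta^n(s)\big]\leq C/(n\beta_n)$ uniformly in $s\in[0,T]$ and $\mathbf{E}\big[\sup_{t\leq T}|\overline{\Lambda^{(n)}}(t)-\overline{\Theta^{(n)}}(t)|\big]\leq C/(n\beta_n)$, so each of the four quantities above is at most $C(T)/(n\beta_n)$. In the present regime $n\beta_n^2\to\zeta\in(0,\infty)$ with $\beta_n\to0$ we have $n\beta_n=(n\beta_n^2)/\beta_n\to\infty$, whence the four quantities vanish, establishing \eqref{eqn.412} and \eqref{eqn.412.1}.

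No serious analytic difficulty is expected; the crux — and the reason the statement holds — is the observation that driving $N^n$ and $\mathcal{N}^n$ by the same Poisson measures, combined with the pathwise domination of Proposition~\ref{Prop.401}, makes each difference process monotone and identifies its compensator, hence (for the compensated version) its quadratic variation, with the already-estimated quantity $\overline{\Lambda^n}-\overline{\Theta^n}$. The only place where the regime $\zeta\in(0,\infty)$ is used is in guaranteeing $n\beta_n\to\infty$.
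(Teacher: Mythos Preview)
Your argument is correct. The martingale half is essentially identical to the paper's: both use the Poisson representation to write $\overline{M^{(n)}}-\overline{\mathcal{M}^{(n)}}$ as a sum of orthogonal compensated Poisson integrals over the strip $\{\theta^n(s-)\leq z\leq \lambda^n_0(s-)\}$ (plus the full strip for $i\leq K$), then apply a maximal inequality; you use Doob's $L^2$ inequality and the predictable bracket $\overline{\Lambda^n}-\overline{\Theta^n}$, the paper uses the Burkholder--Davis--Gundy inequality and the optional bracket, which in expectation coincide.

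For the counting-process terms you take a slightly more direct path than the paper. You exploit that $N^n_i-\mathcal{N}^n_i$ and $\overline{N^n}-\overline{\mathcal{N}^n}$ are pathwise non-decreasing (by Proposition~\ref{Prop.401}), so the supremum equals the terminal value and its expectation equals that of the compensator, namely $\mathbf{E}[\overline{\Lambda^{(n)}}(T)-\overline{\Theta^{(n)}}(T)]$. The paper instead decomposes $\overline{N^{(n)}}-\overline{\mathcal{N}^{(n)}}=\beta_n(\overline{M^{(n)}}-\overline{\mathcal{M}^{(n)}})+(\overline{\Lambda^{(n)}}-\overline{\Theta^{(n)}})$ and bounds each piece separately, recycling the martingale estimate just obtained. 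Both routes reduce to the same quantity controlled by Proposition~\ref{Prop.403}; yours avoids the detour through the martingale term and the extra splitting $n\beta_n^2\theta^n=(n-K)\beta_n^2\theta^n+K\beta_n^2\theta^n$, while the paper's version does not need to verify monotonicity of the aggregated difference.
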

 \proof We just prove \eqref{eqn.412} for conciseness, the limit \eqref{eqn.412.1} can be proved in the same way. 
 We first consider the second expectation in \eqref{eqn.412}. 
 In view of \eqref{eqn.211.c}, \eqref{eqn.410.0} and \eqref{eqn.413.c}, and using that $\theta^n\leq \lambda^n_0$ by Proposition~\ref{Prop.401}, we get 
 \beqnn
 \overline{M^{(n)}}(t) - \overline{\mathcal{M}^{(n)}}(t) 
 \ar=\ar \sum_{i=1}^d \int_0^t \int_0^{\lambda^n_0(s-)} \beta_n \widetilde{\Pi}^n_i(ds,dz)- \sum_{i=K+1}^n  \int_0^t \int_0^{\theta^n(s-)} \beta_n\widetilde{\Pi}^n_i(ds,dz)\cr
 \ar=\ar \sum_{i=1}^K \int_0^t \int_0^{\lambda^n_0(s-)} \beta_n \widetilde{\Pi}^n_i(ds,dz)+\sum_{i=K+1}^n  \int_0^t \int_{\theta^n(s-)}^{\lambda^n_0(s-)}\beta_n \widetilde{\Pi}^n_i(ds,dz).
 \eeqnn
 By the Burkholder-Davis-Gundy inequality, we have for some constant $C>0$ independent of $n$, 
 \beqnn
 \lefteqn{\mathbf{E}\bigg[ \sup_{t\in[0,T]} \Big| \overline{M^{(n)}}(t) - \overline{\mathcal{M}^{(n)}}(t)  \Big|^2 \bigg] 
 \leq C \cdot  \mathbf{E}\Big[   \big[  \overline{M^{(n)}} -  \overline{\mathcal{M}^{(n)}} \big]_T \Big]}\ar\ar\cr
 \ar=\ar C \cdot  \sum_{i=1}^K \mathbf{E}\bigg[   \Big[\int_0^\cdot \int_0^{\lambda^n_0(s-)} \beta_n \widetilde{\Pi}^n_i(ds,dz) \Big]_T \bigg] +C \cdot  \sum_{i=K+1}^n  \mathbf{E}\bigg[   \Big[ \int_0^\cdot \int_{\theta^n(s-)}^{\lambda^n_0(s-)} \beta_n \widetilde{\Pi}^n_i(ds,dz) \Big]_T \bigg]\cr
 \ar=\ar C\cdot  \sum_{i=1}^K \mathbf{E}\bigg[  \int_0^T \int_0^{\lambda^n_0(s-)} \beta_n^2 \Pi^n_i(ds,dz) \Big] \bigg] +C\cdot \sum_{i=K+1}^n  \mathbf{E}\bigg[     \int_0^T \int_{\theta^n(s-)}^{\lambda^n_0(s-)}\beta_n^2 \Pi^n_i(ds,dz) \bigg]\cr
 \ar=\ar   \frac{CK}{n}   \int_0^T  \mathbf{E}\big[n\beta_n^2\cdot\lambda^n_0(s)\big]  ds   + \frac{C (n-K)}{n}\int_0^T \mathbf{E}\Big[  \big|n\beta_n^2\cdot\lambda^n_0(s)-n\beta_n^2\cdot\theta^n(s)\big| \Big] ds,  
 \eeqnn
 which goes to $0$ as $n\to\infty$; see \eqref{eqn.401} and \eqref{eqn.408}.
 Here the first equality follows from the mutual independence of $\{\Pi^n_i(ds,dz)\}_{i\geq 1}$. 
 We now turn to consider the first expectation in \eqref{eqn.412}. By \eqref{eqn.209}, \eqref{eqn.211.b} and \eqref{eqn.413.a}-\eqref{eqn.413.c},
 \beqnn
 \sup_{t\in[0,T]} \Big| \overline{N^{(n)}}(t) - \overline{\mathcal{N}^{(n)}}(t)  \Big|
 \ar\leq\ar \beta_n  \cdot \sup_{t\in[0,T]} \Big|\beta_n  \cdot\overline{M^n}(t) -\beta_n  \cdot\overline{\mathcal{M}^n}(t)  \Big|\cr
 \ar\ar \quad + \int_0^T \big|n\beta_n^2  \lambda^n_0(s) -\beta_n^2 (n-K) \theta^n(s)  \big|ds\cr
 \ar\leq\ar \beta_n \cdot\sup_{t\in[0,T]} \Big|\overline{M^{(n)}}(t) - \overline{\mathcal{M}^{(n)}}(t)  \Big|\cr
 \ar\ar \quad + \int_0^T \big|n\beta_n^2  \lambda^n_0(s) -n\beta_n^2 \theta^n(s)  \big|ds 
 +K\int_0^T  \beta_n^2\theta^n(s)  ds
 \eeqnn
 and hence using also \eqref{eqn.408} and \eqref{eqn.401}
 \beqnn
 \mathbf{E}\bigg[ \sup_{t\in[0,T]} \Big| \overline{N^{(n)}}(t) - \overline{\mathcal{N}^{(n)}}(t)  \Big| \bigg]
 \ar\leq \ar
 \beta_n  \cdot \mathbf{E}\bigg[ \sup_{t\in[0,T]} \Big|\overline{M^{(n)}}(t) - \overline{\mathcal{M}^{(n)}}(t)  \Big| \bigg]\cr
 \ar\ar + \int_0^T \mathbf{E}\Big[\big|n\beta_n^2  \lambda^n_0(s) -n\beta_n^2 \theta^n(s)  \big|\Big]ds  \cr
 \ar\ar + \frac{K}{n}\int_0^T   
 \mathbf{E}\Big[ n\beta_n^2\theta^n(s)\Big]  ds  \to 0.
 \eeqnn
 This concludes the proof. 
 \qed 
 
  \textbf{\textit{Tightness and characterization.}}
We are now ready to conclude the proof of Theorem~\ref{Thm.MainResultSL.03} in the case $n\beta_n^2\to\zeta \in(0,\infty)$. By Prohorov's theorem and Propositions~\ref{Prop.403} and \ref{Prop.404}, the sequence 
 $\big\{( \overline{\Lambda^{(n)}}, \overline{N^{(n)}}, \overline{M^{(n)}},   N^{(n)}_1, M^{(n)}_1,\cdots ,  N^{(n)}_K , M^{(n)}_K \big)\big\}_{n\geq K}$
 is tight if and only if 
 \beqlb\label{eqn.423}
 \big\{\big( \overline{\Theta^{(n)}}, \overline{\mathcal{N}^{(n)}}, \overline{\mathcal{M}^{(n)}}, \mathcal{N}^{(n)}_1, \mathcal{M}^{(n)}_1,\cdots, \mathcal{N}^{(n)}_K, \mathcal{M}^{(n)}_K  \big) \big\}_{n\geq K}
 \eeqlb 
 is relatively compact.
 Hence it suffices to prove that any sequence 
 \beqlb\label{eqn.421}
 \big\{\big( \overline{\Theta^{(n_k)}}, \overline{\mathcal{N}^{(n_k)}}, \overline{\mathcal{M}^{(n_k)}}, \mathcal{N}^{(n_k)}_1, \mathcal{M}^{(n_k)}_1,\cdots,\mathcal{N}^{(n_k)}_K, \mathcal{M}^{(n_k)}_K \big) \big\}_{k\geq 1}
 \eeqlb
 contains a weakly convergent subsequence and the limit process satisfies \eqref{eqn.216c}. 
 By the $C$-tightness of $\{( \overline{\Lambda^{(n_k)}}, \overline{N^{(n_k)}}, \overline{M^{(n_k)}}) \}_{n\geq 1}$, we can assume that 
 \beqlb\label{eqn.422}
 (\overline{\Theta^{(n_k)}}, \overline{\mathcal{N}^{(n_k)}}, \overline{\mathcal{M}^{(n_k)}})\to (\overline{X},\overline{X},\overline{Z}),
 \eeqlb weakly in $\mathrm{D}(\mathbb{R}_+;\mathbb{R}_+^2\times\mathbb{R})$ as $k\to\infty$. 
 By the Skorokhod representation theorem and Proposition~1.17(b) in \cite[p.328]{JacodShiryaev2003}, we can also assume without loss of generality that the limit \eqref{eqn.422} holds uniformly on compacts and almost surely. 
 Finally, using Corollary~\ref{Coro.411} and the fact that $\overline{\Theta^{(n_k)}}=(n-K)\beta_n^2 \cdot \Theta^{n_k} $ we have
 \beqnn
 \mathcal{N}^{(n_k)}_i 
 \ar\overset{\rm d}=\ar n\beta_n^2\cdot N_i^\circ\circ \Big( \frac{\overline{\Theta^{(n_k)}}}{(n-K)\beta_n^2}   \Big) \overset{\rm a.s.} \to \zeta\cdot  N_i^\circ \circ   \frac{\overline{X}}{\zeta} 
 \eeqnn
and
 \beqnn
 \mathcal{M}^{n_k}_i 
 \ar\overset{\rm d}=\ar
 \sqrt{n}\beta_n\cdot \widetilde{N}^\circ_i\circ\Big( \frac{1}{n\beta_n^2} \cdot \beta_n^2 \overline{\Theta^{n_k}} \Big)
 \overset{\rm a.s.} \to \sqrt{\zeta}\cdot \widetilde{N}^\circ_i \circ  \frac{\overline{X}}{\zeta},
 \eeqnn
 as $k\to\infty$ uniformly on compacts for each $1\leq i\leq K$. 
 Consequently, the sequence \eqref{eqn.421} converges weakly and hence \eqref{eqn.423} is relatively compact and any limit process is a weak solution of \eqref{eqn.216c}.

  \subsection{Case~III: $n\beta_n^2\to \zeta=\infty$} 
 \label{sec:proof2:III}
 
In this case, we want to prove that $X_i = Z_i = 0$ almost surely. By definition, it suffices to prove that for any $T\geq 0$ and $\epsilon>0$,
 \beqlb\label{eqn.414}
 \mathbf{P}\Big\{ N^{(n)}_i(T) \geq \epsilon\Big\} + \mathbf{P}\bigg\{\sup_{t\in[0,T]}  \big| M^{(n)}_i(t)\big| \geq \epsilon\bigg\} \to 0, \quad 1\leq i\leq K,
 \eeqlb
 as $n\to\infty$. 
Since $N^n_i(T)$ is an integer we see that
\begin{equation*}
\mathbf{P}\Big\{ N^{(n)}_i(T) \geq \epsilon\Big\} \leq \mathbf{P}\big\{  N^n_i(T) \geq 1\big\}.
\end{equation*}
Using then Chebyshev's inequality, we have
 \beqnn
 \mathbf{P}\big\{  N^n_i(T) \geq 1\big\}
  \leq \frac{1}{n\beta_n^2}\cdot\mathbf{E}\Big[n\beta_n^2 \cdot N^n_i(T) \Big]= \frac{\mathbf{E}\big[ \Lambda_0^{(n)}(T) \big]}{n\beta_n^2},
 \eeqnn
 which goes to $0$ as $n\to\infty$. (by  \eqref{eqn.401}). On the other hand, since $M^{n}_i(t)=  N^n_i(t) -\Lambda^n_0(t) $, then
 \beqnn
 \sup_{t\in[0,T]} \big| M^{(n)}_i(t)\big| \leq  \sqrt{n}\beta_n  \cdot   N^n_i(T)  +\sqrt{n}\beta_n  \cdot   \Lambda^n_0(T).
 \eeqnn
 The preceding result along with the fact that $\sqrt{n}\beta_n \to \infty$ implies that 
 \beqnn
 \sqrt{n}\beta_n  \cdot   N^n_i(T) \leq    N^{(n)}_i(T) \overset{\rm p}\to 0.
 \eeqnn
 Moreover, by \eqref{eqn.401} we also have as $n\to\infty$,
 \beqnn
 \mathbf{E}\Big[\sqrt{n}\beta_n  \cdot   \Lambda^n_0(T)\Big] 
 = \frac{1}{\sqrt{n}\beta_n}\cdot \mathbf{E}\Big[   \Lambda^{(n)}_0(T)\Big]
 \leq    \frac{C}{\sqrt{n}\beta_n}  \to 0.
 \eeqnn
 Combining all these estimates together yields \eqref{eqn.414}.

 \section{Proof of Theorem~\ref{Main.Thm03}}
 \label{Sec.Proof03} 
 \setcounter{equation}{0}
 
Before proving Theorem~\ref{Main.Thm03}, we first state the following key lemma, which establishes a weak convergence result for empirical distributions of a sequence of interchangeable random variables. 
 Let $\mathbb{S}$ be a separable and complete metric space endowed with the $\sigma$-algebra $\mathscr{S}$.

 \begin{lemma} \label{Lemma.501}
 Let $\overline{\xi},\xi_1, \xi_2, \cdots$ be a sequence of $\mathbb{S}$-valued random variables satisfying that conditioned on $\overline{\xi}$, the random variables $\xi_1,\xi_2,\cdots$ are  i.i.d  with common distribution $Q_{\overline{\xi}}$ on $\mathbb{S}$. 
 Consider a sequence of exchangeable $\mathbb{S}$-valued random variables $\{(\overline{\xi^n}, \xi^n_1, \dots, \xi^n_n)\}_{n\geq 1}$, i.e., for each $n \geq 1$ and permutation $\sigma$ of $\{1, \dots, n \}$, 
 \beqlb\label{eqn.501}
 \big(\overline{\xi^n}, \xi^n_{\sigma(1)}, \dots, \xi^n_{\sigma(n)}\big)\overset{\rm d}=\big(\overline{\xi^n}, \xi^n_1, \dots, \xi^n_n\big).
 \eeqlb
 If  $(\overline{\xi^n},\xi^n_1, \dots, \xi^n_K) \overset{\rm d}\to (\overline{\xi}, \xi_1, \dots, \xi_K)$ as $n\to\infty$ for each $K \geq 1$, then 
 \beqlb\label{eqn.505}
 P^n_\xi := \frac{1}{n} \sum_{k=1}^n \delta_{\xi^n_k} \overset{\rm d}\to Q_{\overline{\xi}}.
 \eeqlb
 \end{lemma}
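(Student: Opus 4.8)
The plan is to prove \eqref{eqn.505} by the classical method of moments for the empirical measure, combined with a tightness argument that upgrades convergence of one–dimensional linear functionals to full weak convergence in $\mathrm{M}(\mathbb{S})$. Throughout, recall that since $\mathbb{S}$ is Polish the space $\mathrm{M}(\mathbb{S})$ (with the weak topology) is Polish as well, so Prohorov's theorem and its characterization of relatively compact sets are available both on $\mathbb{S}$ and on $\mathrm{M}(\mathbb{S})$.

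First I would establish tightness of $\{P^n_\xi\}_{n\geq 1}$ as a sequence of random elements of $\mathrm{M}(\mathbb{S})$. Taking $K=1$ in the hypothesis gives $\xi^n_1\overset{\rm d}\to\xi_1$, so $\{\mathrm{Law}(\xi^n_1)\}_n$ is tight; hence for each $\delta>0$ there is a compact $C_\delta\subset\mathbb{S}$ with $\sup_n\mathbf{P}\{\xi^n_1\notin C_\delta\}\leq\delta$. Exchangeability yields $\mathbf{E}[P^n_\xi(C_\delta^c)]=\mathbf{P}\{\xi^n_1\notin C_\delta\}$, so Markov's inequality bounds $\mathbf{P}\{P^n_\xi(C_\delta^c)>\eta\}$ uniformly in $n$. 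Choosing sequences $\delta_j,\eta_j\to 0$ with $\sum_j\delta_j/\eta_j$ as small as desired and setting $\mathcal{K}:=\bigcap_j\{\nu\in\mathrm{M}(\mathbb{S}):\nu(C_{\delta_j}^c)\leq\eta_j\}$ — which is compact by Prohorov's criterion together with lower semicontinuity of $\nu\mapsto\nu(C_{\delta_j}^c)$ — one gets $\sup_n\mathbf{P}\{P^n_\xi\notin\mathcal{K}\}$ arbitrarily small, so $\{\mathrm{Law}(P^n_\xi)\}_n$ is relatively compact in $\mathrm{M}(\mathrm{M}(\mathbb{S}))$.

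Next I would compute moments of linear functionals. Fix $m\geq 1$ and a bounded continuous $g$ on $\mathbb{S}$, without loss of generality $\|g\|_\infty\leq 1$. Expanding $\langle P^n_\xi,g\rangle^m=n^{-m}\sum_{k_1,\dots,k_m=1}^n\prod_{j=1}^m g(\xi^n_{k_j})$ and separating the $O(n^{m-1})$ index tuples with a repetition (which contribute $O(1/n)$ since $|\prod_j g|\leq 1$) from the $n(n-1)\cdots(n-m+1)$ tuples with distinct entries, exchangeability of $(\xi^n_1,\dots,\xi^n_n)$ gives $\mathbf{E}[\langle P^n_\xi,g\rangle^m]=\frac{n(n-1)\cdots(n-m+1)}{n^m}\,\mathbf{E}\big[\prod_{j=1}^m g(\xi^n_j)\big]+O(1/n)$. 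Since $(\overline{\xi^n},\xi^n_1,\dots,\xi^n_m)\overset{\rm d}\to(\overline{\xi},\xi_1,\dots,\xi_m)$ and $(x_1,\dots,x_m)\mapsto\prod_j g(x_j)$ is bounded and continuous on $\mathbb{S}^m$, the prefactor tends to $1$ and the expectation to $\mathbf{E}\big[\prod_{j=1}^m g(\xi_j)\big]$, which by the conditional i.i.d.\ structure of $(\xi_j)_j$ given $\overline{\xi}$ equals $\mathbf{E}\big[\prod_{j=1}^m\mathbf{E}[g(\xi_j)\mid\overline{\xi}]\big]=\mathbf{E}[\langle Q_{\overline{\xi}},g\rangle^m]$. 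Hence $\mathbf{E}[\langle P^n_\xi,g\rangle^m]\to\mathbf{E}[\langle Q_{\overline{\xi}},g\rangle^m]$ for every $m\geq 1$. As $\langle P^n_\xi,g\rangle$ and $\langle Q_{\overline{\xi}},g\rangle$ take values in the fixed compact interval $[-\|g\|_\infty,\|g\|_\infty]$, the (determinate) Hausdorff moment problem gives $\langle P^n_\xi,g\rangle\overset{\rm d}\to\langle Q_{\overline{\xi}},g\rangle$; applying this with $g=\sum_i c_i g_i$ for arbitrary scalars $c_i$ and finitely many bounded continuous $g_1,\dots,g_k$, the Cramér–Wold device yields the joint convergence $(\langle P^n_\xi,g_i\rangle)_{1\leq i\leq k}\overset{\rm d}\to(\langle Q_{\overline{\xi}},g_i\rangle)_{1\leq i\leq k}$.

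Finally I would conclude by combining the two steps. By tightness, any subsequence of $\{P^n_\xi\}$ has a further subsequence converging in distribution to some random measure $\mu\in\mathrm{M}(\mathbb{S})$; applying the continuous mapping theorem to $\nu\mapsto(\langle\nu,g_i\rangle)_{i}$ and using the previous step, $(\langle\mu,g_i\rangle)_i$ and $(\langle Q_{\overline{\xi}},g_i\rangle)_i$ have the same law for every finite family $g_1,\dots,g_k$ of bounded continuous functions. Since these functionals generate the Borel $\sigma$-field of $\mathrm{M}(\mathbb{S})$, they are measure-determining, so $\mu\overset{\rm d}=Q_{\overline{\xi}}$; as every subsequential limit has this common law, the whole sequence converges, proving \eqref{eqn.505}. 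I expect the main obstacle to be precisely the interplay used in this last paragraph: convergence of finitely many linear functionals does not on its own force weak convergence in the infinite-dimensional space $\mathrm{M}(\mathbb{S})$, so the relative compactness of $\{\mathrm{Law}(P^n_\xi)\}$ established in the first step — and the care needed to verify it via the compactness criterion on $\mathrm{M}(\mathbb{S})$ — is genuinely required to glue the pieces together.
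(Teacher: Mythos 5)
Your proof is correct and takes essentially the same route as the paper's: both hinge on the moment computation $\mathbf{E}[\langle P^n_\xi,g\rangle^m]\to\mathbf{E}[\langle Q_{\overline{\xi}},g\rangle^m]$ obtained by exchangeability (splitting distinct from repeated index tuples, with the latter contributing $O(1/n)$) together with the conditional i.i.d.\ structure of $(\xi_j)_j$ given $\overline{\xi}$, followed by the method of moments for bounded random variables. The only divergence is cosmetic: where the paper cites a Kallenberg-type result (its Proposition~\ref{Prop.Lemma.502}) to pass from convergence of the scalar functionals $P^n_\xi(g)$ to weak convergence of the random measures, you carry out that reduction by hand via tightness in $\mathrm{M}(\mathbb{S})$ and identification of subsequential limits through finite-dimensional distributions, which is a valid, more self-contained packaging of the same step.
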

 
 The proof of this lemma relies on the next auxiliary result about the weak convergence of random measures, which can be obtained directly from Theorems 4.11 and 4.19 in \cite[p.111, 126]{Kallenberg2017}. Recall that $\nu(g)$ denotes the integral of function $g$ with respect to the measure $\nu$.

 \begin{proposition}\label{Prop.Lemma.502}
 For a sequence of bounded random measures $\pi, \pi_1, \pi_2, \dots$ on $\mathbb{S}$, we have 
 $\pi_n\overset{\rm d}\to \pi $ as $n\to\infty$ if and only if $\pi_n(g)\overset{\rm d}\to \pi(g)$ for any continuous function $g$ on $\mathbb{S}$ with compact support.   
 \end{proposition}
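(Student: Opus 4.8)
The plan is to prove Lemma~\ref{Lemma.501} by combining the weak convergence of random measures recalled in Proposition~\ref{Prop.Lemma.502} with de Finetti-type arguments for the exchangeable arrays. Since $\mathbb{S}$ is separable and complete, so is the space $\mathrm{M}_{\leq 1}(\mathbb{S})$ of sub-probability measures on $\mathbb{S}$ with the weak topology, and $P^n_\xi$ is a random element of this space. By Prohorov's theorem it suffices to show that $\{P^n_\xi\}_{n\geq 1}$ is tight and that every weak limit point equals $Q_{\overline{\xi}}$ in law. Tightness of $\{P^n_\xi\}$ reduces, via Proposition~\ref{Prop.Lemma.502} and the direct part of the Portmanteau-type criterion there, to the tightness of the real random variables $P^n_\xi(g) = \tfrac1n\sum_{k=1}^n g(\xi^n_k)$ for each fixed continuous compactly supported $g$; these are uniformly bounded by $\|g\|_\infty$, hence tight. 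So the only real content is the identification of the limit.

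For the identification, fix a finite family $g_1,\dots,g_m$ of continuous functions on $\mathbb{S}$ with compact support, and consider the vector $\big(P^n_\xi(g_1),\dots,P^n_\xi(g_m)\big)$. First I would observe that, by exchangeability \eqref{eqn.501}, for any bounded continuous $h:\mathbb{R}^m\to\mathbb{R}$ and any $K\geq 1$,
\beqnn
\mathbf{E}\Big[h\big(P^n_\xi(g_1),\dots,P^n_\xi(g_m)\big)\Big]
= \mathbf{E}\Big[h\Big(\tfrac1n\textstyle\sum_{k=1}^n g_1(\xi^n_k),\dots\Big)\Big],
\eeqnn
and that the empirical average $\tfrac1n\sum_{k=1}^K g_j(\xi^n_k)$ over the first $K$ coordinates is negligible: it is bounded by $\tfrac{K}{n}\|g_j\|_\infty \to 0$. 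Hence $P^n_\xi(g_j)$ differs from $\tfrac1{n-K}\sum_{k=K+1}^n g_j(\xi^n_k)$ by a term tending to $0$, uniformly, for each fixed $K$. The plan is then to compute the limit of the joint moments $\mathbf{E}\big[\prod_{j=1}^m P^n_\xi(g_j)\big]$ (and more generally of mixed products with repetitions) by expanding the product of empirical averages and using that, asymptotically, the diagonal terms (where two indices coincide) contribute negligibly while the off-diagonal terms are governed by the law of $(\overline{\xi^n},\xi^n_1,\dots,\xi^n_r)$ for bounded $r$. Precisely, for indices $k_1,\dots,k_r$ that are pairwise distinct the exchangeability gives $\mathbf{E}\big[\prod_{l} g_{j_l}(\xi^n_{k_l})\big] = \mathbf{E}\big[\prod_l g_{j_l}(\xi^n_l)\big]$, which by the assumed convergence $(\overline{\xi^n},\xi^n_1,\dots,\xi^n_r)\overset{\rm d}\to(\overline{\xi},\xi_1,\dots,\xi_r)$ and boundedness/continuity of the $g_{j_l}$ converges to $\mathbf{E}\big[\prod_l g_{j_l}(\xi_l)\big] = \mathbf{E}\big[\prod_l Q_{\overline{\xi}}(g_{j_l})\big]$, using the conditional i.i.d.\ structure of $(\xi_1,\xi_2,\dots)$ given $\overline{\xi}$. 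Counting: the number of $r$-tuples with a repeated index is $O(n^{r-1})$, so after dividing by $n^r$ these vanish. Therefore
\beqnn
\mathbf{E}\Big[\prod_{l=1}^r P^n_\xi(g_{j_l})\Big] \to \mathbf{E}\Big[\prod_{l=1}^r Q_{\overline{\xi}}(g_{j_l})\Big],\quad n\to\infty,
\eeqnn
for every finite collection with repetitions allowed.

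To conclude, I would upgrade this moment convergence to convergence in distribution of $\big(P^n_\xi(g_1),\dots,P^n_\xi(g_m)\big)$ to $\big(Q_{\overline{\xi}}(g_1),\dots,Q_{\overline{\xi}}(g_m)\big)$: the entries are uniformly bounded random variables (by $\|g_j\|_\infty$), so all moments are bounded and the limiting distribution, being supported in a compact box, is determined by its moments; hence convergence of all mixed moments implies weak convergence of the vectors. Since this holds for every finite family $g_1,\dots,g_m$ of continuous compactly supported functions, the finite-dimensional distributions of the random measure $P^n_\xi$ converge to those of $Q_{\overline{\xi}}$, and combined with the tightness established above, Proposition~\ref{Prop.Lemma.502} yields $P^n_\xi \overset{\rm d}\to Q_{\overline{\xi}}$, which is \eqref{eqn.505}. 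The main obstacle is the combinatorial bookkeeping in the moment expansion — making precise that diagonal terms are negligible and that the off-diagonal terms, after using exchangeability to reduce to the first few coordinates, genuinely converge to products of $Q_{\overline{\xi}}(g_{j_l})$ via the conditional i.i.d.\ property of the limit array; the tightness and moment-determinacy steps are routine given the uniform boundedness.
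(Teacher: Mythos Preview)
Your proposal does not address the stated Proposition~\ref{Prop.Lemma.502} at all. The proposition is the abstract criterion for convergence in distribution of random measures on a Polish space (equivalence of $\pi_n\overset{\rm d}\to\pi$ with $\pi_n(g)\overset{\rm d}\to\pi(g)$ for all continuous compactly supported $g$), and the paper disposes of it by a direct reference to Theorems~4.11 and 4.19 in Kallenberg's \emph{Random Measures, Theory and Applications}. What you have written is instead a proof of Lemma~\ref{Lemma.501}, treating Proposition~\ref{Prop.Lemma.502} as a black box; so as a proof of the statement in question, it is simply off target.

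Incidentally, your argument for Lemma~\ref{Lemma.501} follows essentially the same route as the paper's own proof of that lemma: reduce via Proposition~\ref{Prop.Lemma.502} to convergence of $P^n_\xi(g)$, then use the method of moments (uniform boundedness plus moment-determinacy of compactly supported laws), expand the $K$-th moment, use exchangeability to replace off-diagonal tuples by the first $K$ coordinates, and pass to the limit using the assumed finite-dimensional convergence and the conditional i.i.d.\ structure. Your version with several test functions $g_1,\dots,g_m$ and mixed moments is slightly more elaborate than necessary, since Proposition~\ref{Prop.Lemma.502} only requires convergence of $\pi_n(g)$ for \emph{each} single $g$, which is exactly what the paper checks; but the extra generality does no harm. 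Still, none of this constitutes a proof of Proposition~\ref{Prop.Lemma.502} itself.
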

 
 \textit{Proof of Lemma~\ref{Lemma.501}.} 
 By Proposition~\ref{Prop.Lemma.502}, it suffices to prove that 
 \beqlb\label{eqn.502}
 P^n_\xi(g) = \frac{1}{n} \sum_{k=1}^n g(\xi^n_k) \overset{\rm d}\to Q_{\overline{\xi}}(g) =  \int_\mathbb{S} g(x) Q_{\overline{\xi}}(dx),
 \eeqlb
 as $n\to\infty$ for any continuous function $g$ on $\mathbb{S}$ with compact support.
 The continuity and compact support of $g$ yield that 
 \beqnn 
 \sup_{n\geq 1} \big| P^n_\xi(g) \big| \leq \big\|g\big\|_{L^\infty}<\infty,\quad a.s. 
 \eeqnn
 By  Theorem 30.2 in \cite{Billingsley2008}, the limit \eqref{eqn.502} follows if  for any $K\geq 1$,
 \beqlb
 	\mathbf{E}\Big[\big( P^n_\xi(g)\big)^K\Big] \to \mathbf{E}\Big[ \big( Q_{\overline{\xi}}(g)\big)^K\Big].
 \eeqlb 
 Indeed, the exchangeability of $\{\xi^n_i\}_{1\leq i\leq n}$ ensures that
 \beqlb\label{eqn.504}
 \mathbf{E}\Big[\big( P^n_\xi(g)\big)^K\Big]
 \ar=\ar \mathbf{E}\bigg[\Big( \frac{1}{n} \sum_{k=1}^n g(\xi^n_k)\Big)^K\bigg]
 \cr \ar=\ar \frac{1}{n^K} \sum_{k_1, \dots, k_K=1}^n \mathbf{E}\bigg[\prod_{j=1}^K g(\xi^n_{k_j}) \bigg]\cr
 \ar=\ar \frac{n!}{(n-K)! \, n^K} \mathbf{E}\bigg[\prod_{j=1}^K g(\xi^n_{j}) \bigg]+ 
 \frac{1}{n^K} \sum_{(*)} \mathbf{E}\bigg[\prod_{j=1}^K g(\xi^n_{k_j})\bigg]
 \eeqlb 
 where the last sum is over all indexes $1 \leq k_1, \dots, k_K \leq n$ with $k_i =k_j$ for some $i \neq j$. 
 The assumption $(\overline{\xi^n},\xi^n_1, \dots, \xi^n_K) \overset{\rm d}\to (\overline{\xi}, \xi_1, \dots, \xi_K)$ implies that 
 \beqnn 
  \mathbf{E}\bigg[\prod_{j=1}^K g(\xi^n_{k_j})\bigg] 
  \to \mathbf{E}\bigg[\prod_{j=1}^K g(\xi_{j})\bigg] = \mathbf{E}\bigg[\Big(\mathbf{E}\Big[ g(\xi_1) \,\big|\,\overline{\xi} \Big]\Big)^K\bigg]= \mathbf{E}\Big[ \big(Q_{\overline{\xi}}(g)\big)^K\Big]
 \eeqnn
 as $n\to\infty$. Here the last two equalities follow from the conditional independence and identical distribution of $\xi_1,\cdots,\xi_K$. 
 Additionally, the boundedness of $g$ yields  
 \beqnn
 \mathbf{E}\bigg[\prod_{j=1}^K g(\xi^n_{k_j})\bigg] \leq \big\|g\big\|_{L^\infty}^K
 \quad \mbox{and hence}\quad
 \frac{1}{n^K} \sum_{(*)} \mathbf{E}\bigg[\prod_{j=1}^K g(\xi^n_{k_j})\bigg] 
 \leq C\cdot \Big(1- \frac{n!}{(n-K)! \, n^K}  \Big),
 \eeqnn
 uniformly in $n\geq K$. Note that as $n\to\infty$,
 \beqnn
 \frac{n!}{(n-K)! \, n^K} \to 1.
 \eeqnn
 Plugging this into \eqref{eqn.504} immediately proves that 
 \beqnn
 \lim_{n\to\infty}\mathbf{E}\Big[\big( P^n_\xi(g)\big)^K\Big] 
 = \mathbf{E}\Big[ \big(Q_{\overline{\xi}}(g)\big)^K\Big]
 \eeqnn
 and hence \eqref{eqn.505} holds.
 \qed 
 
 {\it Proof of Theorem~\ref{Main.Thm03}.} 
 Since $\mathrm{D}(\mathbb{R}_+; \mathbb{R}_+\times \mathbb{R})$ is a separable and complete metric space, it suffices to identify that $\mathrm{D}(\mathbb{R}_+; \mathbb{R}_+\times \mathbb{R})$-valued random variables 
 \beqnn
 \big((\overline{X}, \overline{Z}), (X_1,Z_1),(X_2,Z_2), \cdots  \big)
 \quad\mbox{and}\quad 
 \big\{ \big((\overline{N^{(n)}}, \overline{M^{(n)}}), (N^{(n)}_1, M^{(n)}_1),\cdots, (N^{(n)}_n, M^{(n)}_n) \big) \big\}_{n\geq 1}
 \eeqnn
  satisfy assumptions in Lemma~\ref{Lemma.501}. 
  Indeed, in Theorem~\ref{Thm.MainResultSL.03}(2), the Brownian motions $W_1,\cdots,W_K$ and Poisson processes $N^\circ_1,\cdots,N^\circ_K$ are mutually independent and they are also independent of $(\overline{X},\overline{Z},B)$. Therefore, conditioned on $(\overline{X},\overline{Z})$, the random variables $(X_1,Z_1),(X_2,Z_2), \cdots$ are i.i.d. with common distribution $\delta_{\overline{X}}\otimes\mathcal{N}\circ \overline{X}$ when $\zeta =0$, $\mathcal{P}_\zeta \circ \frac{\overline{X}}{\zeta}$ when $\zeta \in(0,\infty)$ or  $\delta_{(0,0)}$ when $\zeta =\infty$. 
  On the other hand, for each $n\geq 1$, the exchangeability of the particle system $(N^n_1,\cdots,N_n^n)$ immediately induces that \eqref{eqn.501} holds for $\big((\overline{N^{(n)}}, \overline{M^{(n)}}), (N^{(n)}_1, M^{(n)}_1),\cdots, (N^{(n)}_n, M^{(n)}_n) \big) $. 
  Finally, by Lemma~\ref{Lemma.501}, we have the weak convergence 
  \beqnn
  \big((\overline{N^{(n_k)}}, \overline{M^{(n_k)}}), (N^{(n_k)}_1, M^{(n_k)}_1),\cdots, (N^{(n_k)}_K, M^{(n_k)}_K) \big)
   \to 
 \big((\overline{X}, \overline{Z}), (X_1,Z_1),\cdots,(X_K,Z_K)  \big)
   \eeqnn
   in $\mathrm{D}(\mathbb{R}_+;(\mathbb{R}_+\times \mathbb{R})^{1+K})$. This implies the desired weak convergence \eqref{eqn.213} with the limit given as before. 
   \qed

 \bibliographystyle{plain}

 \bibliography{Reference}

 \appendix
 
 \renewcommand{\theequation}{A.\arabic{equation}} 
 \section{Summary about Hawkes processes}
 \label{Sec.AppendixHawkes} 
 \setcounter{equation}{0}

In this section, we summarize some results about multidimensional Hawkes processes. 

\begin{definition}
\label{def:hawkes}
 A {\rm$d$-dimensional Hawkes process} with {\rm baseline} (or {\rm background rate}) $\mu \in \mathbb{R}_+^d$ and {\rm self-exciting kernel} $\varphi : \mathbb{R}_+ \to \mathbb{R}_+^{d\times d} $ is a $d$-dimensional $(\mathscr{F}_t)$-adapted point process $N = (N_1, \dots, N_d)$ such that
\begin{itemize}
\item Almost surely, for all $i \neq j$, the two point processes $N_i$ and $N_j$ never jump simultaneously,
\item For every $i$, the compensator $\Lambda_i$ of $N_i$ has the form $\Lambda_i(t) = \int_0^t \lambda_i(s)\, ds$ with
\begin{equation}
\label{eq:def:intensity}
 \lambda_i(t) = \mu_i + \sum_{j=1}^d \int_0^{t} \varphi_{i,j}(t-s) \,dN_j(s),\quad   t \geq 0.
\end{equation}
\end{itemize}
\end{definition}

Alternatively, Hawkes processes also can be defined through their Poisson representation.

\begin{definition}
\label{def:hawkes:1}
 Consider $d$ independent Poisson random measures $  \Pi_i(ds,dz)  $, $i=1,\cdots,d $, on $\mathbb{R}_+^2$ with intensity $ds\,dz$. 
 A {\rm$d$-dimensional Hawkes process} with {\rm baseline} (or {\rm background rate}) $\mu \in \mathbb{R}_+^d$ and {\rm self-exciting kernel} $\varphi : \mathbb{R}_+ \to \mathbb{R}_+^{d\times d} $ is a $d$-dimensional $(\mathscr{F}_t)$-adapted point process $N = (N_1, \dots, N_d)$ such that
\beqlb
\label{eq:poissonhawkes}
N_i(t)= \int_0^t \int_0^{\lambda_i(s-)} \Pi_i(ds,dz) , \quad t\geq 0,
\eeqlb
where $\lambda_i$ is given as in \eqref{eq:def:intensity}.
\end{definition}

Equivalence between Definitions \ref{def:hawkes} and \ref{def:hawkes:1} is done (for instance) in Theorem~7.4 in \cite[p.90]{IkedaWatanabe1989}.
\begin{theorem}[Poisson representation of Hawkes processes]
\begin{enumerate}
\item Suppose that $N$ is a Hawkes process in the sense of Definition \ref{def:hawkes:1}. Then it is also a Hawkes process in the sense of Definition \ref{def:hawkes}
\item Suppose that $N$ is a Hawkes process in the sense of Definition \ref{def:hawkes}. Then we can build on an enlarged probability space $(\widetilde {\Omega}, \widetilde{ \mathscr{F}}, \widetilde{ \mathscr{F}}_t, \widetilde{\mathbf{P}})$ a family of independent Poisson measures $\{ \Pi_i(ds,dz) \}_{1\leq i\leq d}$ on $\mathbb{R}_+^2$ with intensity $ds\,dz$ such that \eqref{eq:poissonhawkes} holds.
\end{enumerate}
\end{theorem}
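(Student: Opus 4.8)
The plan is to treat the two implications separately. Implication (1) will be a direct verification from the theory of point-process stochastic integrals, whereas implication (2) requires constructing the driving Poisson measures explicitly by \emph{marking} the jumps of $N$ and \emph{filling in} the complement of the graph with independent Poisson mass, followed by an appeal to Watanabe's characterization of Poisson random measures.

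For (1), I would assume $N$ satisfies \eqref{eq:poissonhawkes}. First, since the $\Pi_i$ are independent Poisson random measures on $\mathbb{R}_+^2$, almost surely no two of them carry atoms sharing a time coordinate, so for $i\neq j$ the processes $N_i$ and $N_j$ never jump at the same instant. Next I would observe that $\lambda_i$ from \eqref{eq:def:intensity} is left-continuous and $(\mathscr{F}_t)$-adapted --- this uses only $\varphi\in L^1_{\rm loc}$ and the local finiteness of the point process $N$ --- hence predictable. Then the standard result on integration against Poisson random measures (e.g.\ \cite{IkedaWatanabe1989}, Ch.~II, or \cite{JacodShiryaev2003}), after a localization along $T_m:=\inf\{t:N_i(t)\geq m\}$ to secure integrability, yields that $\int_0^t\int_0^{\lambda_i(s-)}(\Pi_i(ds,dz)-ds\,dz)$ is a local martingale; equivalently, the compensator of $N_i$ is $\int_0^t\lambda_i(s)\,ds$. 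This is precisely Definition~\ref{def:hawkes}.

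For (2), I would assume $N$ is a Hawkes process in the sense of Definition~\ref{def:hawkes} with intensities $\lambda_i$, and enlarge the space so as to carry, independently of $\mathscr{F}$, i.i.d.\ uniforms $(U^i_k)_{i,k}$ on $[0,1]$ and independent Poisson random measures $\Pi_i^{\sharp}$ on $\mathbb{R}_+^2$ with intensity $ds\,dz$. Enumerating the jump times of $N_i$ as $(\tau^i_k)_k$ and setting $Z^i_k:=U^i_k\,\lambda_i(\tau^i_k-)$, I would define
\[
\Pi_i(ds,dz):=\sum_{k\geq1}\delta_{(\tau^i_k,Z^i_k)}(ds,dz)+\mathbf{1}_{\{z>\lambda_i(s-)\}}\,\Pi_i^{\sharp}(ds,dz).
\]
By construction $\int_0^t\int_0^{\lambda_i(s-)}\Pi_i(ds,dz)=\#\{k:\tau^i_k\leq t\}=N_i(t)$, so \eqref{eq:poissonhawkes} holds. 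The crux is then to show that $\Pi_1,\dots,\Pi_d$ are \emph{independent Poisson} random measures with intensity $ds\,dz$: the part of $\Pi_i$ below the graph $z=\lambda_i(s-)$ has predictable compensator $\mathbf{1}_{\{z\leq\lambda_i(s-)\}}\,ds\,dz$, because $N_i$ has stochastic intensity $\lambda_i$ and, given the strict past, each mark $Z^i_k$ is conditionally uniform on $(0,\lambda_i(\tau^i_k-)]$; the part above the graph contributes $\mathbf{1}_{\{z>\lambda_i(s-)\}}\,ds\,dz$ by choice of $\Pi_i^{\sharp}$; summing gives the deterministic compensator $ds\,dz$, whence $\Pi_i$ is Poisson by Watanabe's theorem, and independence across $i$ follows from the no-simultaneous-jumps property of $N$ together with the independent choice of marks and fill-ins.

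The main obstacle is exactly this last verification in (2): proving that each $\Pi_i$ is genuinely Poisson --- not merely that it has the correct mean measure --- and that the family is jointly independent. This rests on Watanabe's characterization and on a careful identification of the conditional law of the marks $Z^i_k$ given the pre-$\tau^i_k$ $\sigma$-field, together with checking that enlarging the filtration by the independent auxiliary data keeps $\lambda_i(\cdot-)$ predictable. All of this is classical and fully carried out in \cite[Theorem~7.4, p.~90]{IkedaWatanabe1989}, so in the paper we only sketch the argument and refer there for the details.
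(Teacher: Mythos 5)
Your proposal is correct and follows essentially the same route as the paper, which proves this theorem simply by citing Theorem~7.4 in \cite[p.90]{IkedaWatanabe1989}; your sketch (compensator verification for (1), marking the jumps and filling in above the graph plus Watanabe's characterization for (2)) is precisely the argument behind that reference, to which you also ultimately defer.
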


For convenience, we write  \eqref{eqn.Intensity} in the following matrix form
 \beqlb
 \lambda(t)=\mu +  \int_0^t \phi(t-s)\, dN(s),\quad t\geq 0.
 \eeqlb
 Note that $N$ has compensator $\Lambda=(\Lambda_1,\cdots,\Lambda_d)$ with
 \beqlb 
 \Lambda_i (t) := \big\|\lambda_i\big\|_{L^1_t}=\int_0^t \lambda_i(s) \, ds,\quad t\geq 0,\, i=1,\cdots, d
 \eeqlb
 and we define the compensated point process 
 $M:=\{M(t):t\geq 0\}$ with 
 \begin{equation*}
 M(t) := N(t) - \Lambda(t),
 \end{equation*}
 which is a $(\mathscr{F}_t)$-martingale. Since components of $N$ never jump simultaneously, $M$ has predictable quadratic covariation and quadratic covariation 
 \beqlb\label{eqn.211}
  \big\langle M_i,M_j \big\rangle = \mathbf{1}_{\{ i=j\}} \cdot \Lambda_i
 \quad \mbox{and}\quad 
  \big[M_i,M_j \big] = \mathbf{1}_{\{ i=j\}} \cdot N_i,\quad 1\leq i\leq d. 
 \eeqlb
 Additionally, we define the resolvent of the second kind associated to $\phi$  by
 \beqlb\label{Resolvent}
 \psi(t) := \sum_{k\geq 1} \phi^{*k}(t),\quad t\geq 0,
 \eeqlb
 which satisfies the resolvent equation
 \beqlb\label{eqn.Resolvent}
 \psi(t)=\phi(t) + \psi*\phi(t),\quad t \geq 0.
 \eeqlb 
 From this equation, we obtain the following powerful martingale representation of the intensity $\lambda$, see for instance Lemma~3 in \cite{BacryDelattreHoffmannMuzy2013} or Proposition~2.1 in \cite{JaissonRosenbaum2015}. 
 
 \begin{lemma}[Martingale representation]
 	The intensity $\lambda$ is the unique solution to 
 	\beqlb\label{MartingaleRep}
 	 \lambda(t)= \mu + \big\|\psi\big\|_{L^1_t}\cdot \mu +\int_0^t \psi(t-s)\,dN(t), \quad t\geq 0.
 	\eeqlb
 \end{lemma}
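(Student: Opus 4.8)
The plan is a classical resolvent computation, so I would not expect genuine surprises; the content is to turn the defining relation into a linear Volterra equation and invert it using the resolvent $\psi=\sum_{k\ge1}\phi^{*k}$. First I would write $N=\Lambda+M$ with $\Lambda(t)=\int_0^t\lambda(s)\,ds$, so that $dN(s)=\lambda(s)\,ds+dM(s)$, and substitute this into $\lambda(t)=\mu+\int_0^t\phi(t-s)\,dN(s)$. This turns the definition into
\[
\lambda(t)=h(t)+\int_0^t\phi(t-s)\lambda(s)\,ds,\qquad h(t):=\mu+\int_0^t\phi(t-s)\,dM(s),
\]
a linear Volterra equation in $\lambda$ with a deterministic (pathwise) convolution on the right and all driving terms collected in $h$. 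Since $\phi\in L^1_{\mathrm{loc}}(\mathbb R_+;\mathbb R_+^{d\times d})$, the resolvent of the second kind $\psi=\sum_{k\ge1}\phi^{*k}$ lies in $L^1_{\mathrm{loc}}$ and satisfies $\psi=\phi+\phi*\psi=\phi+\psi*\phi$ (this, together with the well-posedness of the Volterra equation, is exactly what the cited results supply), so the unique locally integrable solution is $\lambda=h+\psi*h$.

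Next I would expand $\lambda=h+\psi*h$. Using linearity, the identity $\int_0^t\psi(t-s)\mu\,ds=\|\psi\|_{L^1_t}\cdot\mu$, and the stochastic Fubini theorem to commute the deterministic kernel $\psi$ past the stochastic integral against $M$ (legitimate because $\psi\in L^1_{\mathrm{loc}}$ and $M$ is a locally square-integrable martingale whose bracket is controlled by $N$), one gets
\[
\lambda(t)=\mu+\|\psi\|_{L^1_t}\cdot\mu+\int_0^t(\phi+\psi*\phi)(t-s)\,dM(s)=\mu+\|\psi\|_{L^1_t}\cdot\mu+\int_0^t\psi(t-s)\,dM(s),
\]
the last step being the resolvent identity $\psi=\phi+\psi*\phi$. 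This is \eqref{MartingaleRep}, i.e.\ the intensity solves the claimed equation.

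For uniqueness I would run the manipulation in reverse. If $\lambda'$ is any locally integrable adapted process satisfying \eqref{MartingaleRep} with $M=N-\int_0^\cdot\lambda'(s)\,ds$, then, writing $\delta$ for the neutral element of convolution, $\lambda'$ solves $(\delta+\psi)*\lambda'=(\delta+\psi)*\mu+\psi*dN$ as a functional of $N$. Since $(\delta+\psi)*(\delta-\phi)=\delta-\phi+\psi-\psi*\phi=\delta$ (again just the resolvent identities), convolving by $\delta-\phi$ and using $(\delta-\phi)*\psi=\psi-\phi*\psi=\phi$ yields $\lambda'=\mu+\int_0^\cdot\phi(\cdot-s)\,dN(s)$; the right-hand side is determined by $N$ and equals the intensity $\lambda$ by definition, so $\lambda'=\lambda$.

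The only point that is not bookkeeping is making the resolvent manipulations rigorous: that $\psi$ exists as an $L^1_{\mathrm{loc}}$ matrix-valued kernel (without imposing global subcriticality), that mixed deterministic/stochastic convolutions are associative, i.e.\ $\psi*(\phi*dM)=(\psi*\phi)*dM$ (stochastic Fubini), and that the Volterra equation for $\lambda$ has a unique $L^1_{\mathrm{loc}}$ solution. All of these are standard and are precisely what Lemma~3 in \cite{BacryDelattreHoffmannMuzy2013} and Proposition~2.1 in \cite{JaissonRosenbaum2015} provide, so in the write-up I would invoke those rather than reprove them, and keep the body of the proof to the two short resolvent computations above.
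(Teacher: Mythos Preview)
The paper does not prove this lemma; it merely cites Lemma~3 in \cite{BacryDelattreHoffmannMuzy2013} and Proposition~2.1 in \cite{JaissonRosenbaum2015}, and your resolvent computation is precisely the argument those references carry out, so the proposal is correct and matches the intended approach. Two small remarks: first, the printed statement has $dN(t)$, which is a typo for $dM(s)$ (compare \eqref{MartingaleRep.n}, where the formula is actually used), and your derivation correctly lands on the $dM$ version; second, your uniqueness paragraph is slightly tangled because you redefine $M$ in terms of the candidate $\lambda'$, making the equation self-referential---the cleaner observation is that, given $N$, both \eqref{eq:def:intensity} and \eqref{MartingaleRep} are explicit expressions in $N$, and the resolvent identities you wrote down show they agree, which is all the ``uniqueness'' amounts to here.
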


 \renewcommand{\theequation}{B.\arabic{equation}} 
 
 \section{Regularity of $(X,Z,Y)$}
 \label{Sec.AppendixHCY} 
 \setcounter{equation}{0}

 In this section, we consider the regularity of the solution $(X,Z)$ of (\ref{eqn.LimitSVE}) and the derivative $Y$ of $X$ on the interval $[0,1]$ with $d=1$; see Theorem~\ref{Thm.MainResultSL.01} and Remark~\ref{Remark.01}. All the following results can be easily generalized to the general case with $d>1$ and interval $[0,T]$ for $T\geq 0$. 
 Without further mention, we always assume that functions and processes in this section are restricted on $[0,1]$. We consider $p\in(0,\infty]$ and $\alpha \in (0,1]$, we write $L^p=L^p([0,1];\mathbb{R})$ and we denote by $H^\alpha$ the space of all $\alpha$-H\"older continuous functions on $(0,1]$.

We first recall some basic properties of  convolution operator  as well as fractional integral and fractional derivative. 
 The fractional integral of order $\alpha\in(0,1]$ and fractional derivative of order $\gamma\in [0,1)$ of a function $g$ are defined respectively by
 \beqnn
 I^\alpha g(t) = \frac{1}{\Gamma ( \alpha )}
 \int_0^t \frac{g(s)}{(t-s )^{1-\alpha}} \, ds  
 \quad \text{and}\quad 
  D^\gamma g(t) = \frac{1}{\Gamma (1- \gamma)}
 \frac{d}{dt} \int_0^t\frac{g(s)}{( t-s ) ^{\gamma}} \,ds  ,
 \eeqnn
 whenever they exist. Here $\Gamma(\cdot)$ is the Gamma function. 
 Without confusion, we write $D^{-\alpha}=I^\alpha$ and $I^{-\gamma}=D^\gamma$. 
 The following results come from Appendix A.1 in \cite{JaissonRosenbaum2016}.
 
  \begin{proposition}\label{Prop.B01}
 	\begin{enumerate}
 		\item[(1)] For $g_1 \in H^\alpha$ and $g_2\in L^\infty$, we have $g_1 *g_2 \in H^\alpha$. 
 		
 		\item[(2)] For $g_1 \in H^{\alpha_1}$ and $g_2  \in H^{\alpha_2}$, Then $g_1 * g_2 \in H^{\gamma\wedge 1}$ for any $0<\gamma< \alpha_1+\alpha_2$.
 		
 		\item[(3)] Assume that $g_1$ is continuous and $g_2$ is differentiable on $(0,1]$. 
 		For some constants $\alpha \in (0,1)$ and $C>0$, if $|g_2(t)| \leq C \cdot t^{-\alpha}$ and $|g'_2(t)| \leq C\cdot  t^{-\alpha-1}$ for any $t\in (0,1]$, then $g_1 *g_2 \in H^{1-\alpha}$.  
 		
 		\item[(4)] For $g_1 \in H^\alpha $ with $g_1(0)=0$ and $g_2 \in  L^p $ with $p>1$, we have $D^\beta g_1 \in H^{\alpha-\beta} $ and  $g_1*g_2 = (D^\beta g_1) * (I^\beta g_2)$ for any $0\leq \beta<\alpha$.
 		
 		\item[(5)]  For $g_1\in L^p $ and $g_2\in  L^q $ with $p,q\in(0,\infty]$, we have that $I^\alpha g_1 $ and $I^\alpha g_2$ exist, and $g_1* (I^\alpha g_2)= (I^\alpha g_1 ) * g_2 $ for any $(1/p+1/q-1)^+<  \alpha\leq 1$, .

 		\item[(6)] Let $g_1\in L^\infty$ and $g_2$ such that $ t^\theta g_2(t)\in H^\alpha $ for some $\theta\in (0,1)$.  
 		For any $0<\beta_1<(1-\theta)\wedge \alpha$ and $0<\beta_2\leq \alpha$, we have $D^\beta g_2 \in L^p $ for some $p>1$ and  $g_1*g_2 = (I^\alpha g_1) * (D^\alpha g_2)$.

 	\end{enumerate}
 \end{proposition}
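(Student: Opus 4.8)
The plan is to reduce every assertion to a small set of standard facts about the convolution semigroup $\{h_\alpha\}_{\alpha>0}$, where $h_\alpha(t):=t^{\alpha-1}/\Gamma(\alpha)$: the representation $I^\alpha g=h_\alpha*g$, the semigroup identity $h_\alpha*h_\beta=h_{\alpha+\beta}$ (equivalently $I^\alpha I^\beta=I^{\alpha+\beta}$ and $D^\gamma I^\alpha=I^{\alpha-\gamma}$ for $\alpha>\gamma$), the inversion $I^\beta D^\beta g=g$ valid when $g$ is regular enough and decays fast enough at the origin, and Young's convolution inequality, which on the compact interval $[0,1]$ gives $h_\alpha\in L^r$ for every $r<1/(1-\alpha)$. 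As the excerpt already indicates, items (1)--(4) coincide, up to cosmetic reformulation, with the statements collected in Appendix~A.1 of \cite{JaissonRosenbaum2016}; I would invoke them directly, or reproduce the short arguments below, and then handle (5)--(6) with the same machinery.

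For (1) write
\[
(g_1*g_2)(t+h)-(g_1*g_2)(t)=\int_0^t\big(g_1(t+h-s)-g_1(t-s)\big)g_2(s)\,ds+\int_t^{t+h}g_1(t+h-s)g_2(s)\,ds,
\]
and bound the first integral by $\|g_1\|_{H^\alpha}\|g_2\|_{L^\infty}h^\alpha$ and the second by $\|g_1\|_{L^\infty}\|g_2\|_{L^\infty}h$. For (2) I would use the same decomposition but extract the cancellation $\int_0^t\big(g_1(t+h-s)-g_1(t-s)\big)\,ds=\int_t^{t+h}g_1-\int_0^hg_1=O(h)$: writing $g_2(s)=g_2(t)+(g_2(s)-g_2(t))$ and controlling the remainder via the H\"older modulus of $g_2$ produces a bulk term of order $h^{\alpha_1+\alpha_2}$, which, together with a boundary term of order $h^{1+\alpha_1}$, yields $(\gamma\wedge1)$-H\"older continuity for every $\gamma<\alpha_1+\alpha_2$. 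For (3) a direct estimate of the increment $(g_1*g_2)(t+h)-(g_1*g_2)(t)$, splitting $s\in(0,t-h]$ from $s\in(t-h,t]$ and using $|g_2(t)|\le Ct^{-\alpha}$ together with $|g_2'(t)|\le Ct^{-\alpha-1}$, shows it is $O(h^{1-\alpha})$, hence $g_1*g_2\in H^{1-\alpha}$ on $(0,1]$.

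For (4), the assertion $D^\beta g_1\in H^{\alpha-\beta}$ is the classical statement that $D^\beta$ maps $\alpha$-H\"older functions vanishing at $0$ into $(\alpha-\beta)$-H\"older functions; one proves it from the Marchaud-type representation $D^\beta g_1(t)=\tfrac1{\Gamma(1-\beta)}\big(\beta\int_0^t\tfrac{g_1(t)-g_1(s)}{(t-s)^{1+\beta}}\,ds+g_1(t)t^{-\beta}\big)$ by a near-/far-from-diagonal splitting of the increment. The identity $g_1*g_2=(D^\beta g_1)*(I^\beta g_2)$ then follows from the inversion $g_1=I^\beta D^\beta g_1$ (legitimate since $g_1$ is bounded with $g_1(0)=0$) and associativity of convolution: $g_1*g_2=(h_\beta*D^\beta g_1)*g_2=(D^\beta g_1)*(h_\beta*g_2)=(D^\beta g_1)*(I^\beta g_2)$, all convolutions converging absolutely because $D^\beta g_1$ is bounded and $g_2\in L^p\subset L^1$. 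Part (5) is pure associativity, $g_1*(I^\alpha g_2)=g_1*(h_\alpha*g_2)=(h_\alpha*g_1)*g_2=(I^\alpha g_1)*g_2$; one only needs to check that the intermediate products are well-defined $L^1_{\rm loc}$ functions, which is exactly what $(1/p+1/q-1)^+<\alpha\le1$ guarantees via Young's inequality, since it permits a choice $r<1/(1-\alpha)$ making $h_\alpha*g_1$, $h_\alpha*g_2$ and the triple convolution unambiguous. Finally (6) is the variant of (4) under the weaker boundary control $t^\theta g_2(t)\in H^\alpha$: then $g_2$ may blow up like $t^{-\theta}$ at the origin but remains integrable, one shows the relevant fractional derivative of $g_2$ lies in some $L^p$ with $p>1$ under the stated restriction on the orders, and then $g_2=I^\beta D^\beta g_2$ together with (5) transfers the fractional integral onto $g_1\in L^\infty$, giving the claimed identity.

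I expect the main obstacle to be the exponent bookkeeping in (4)--(6): at each stage one must verify that a given fractional integral or derivative genuinely lies in the H\"older or $L^p$ class claimed, and, crucially, that the inversion identities $I^\beta D^\beta g=g$ hold, which fails without control of $g$ near $0$. This is precisely the role of the hypotheses $g_1(0)=0$ in (4) and $t^\theta g_2(t)\in H^\alpha$ with $\theta<1$ in (6). The cleanest route is to reduce each statement to the two-function identity $(I^\beta g_1)*(D^\beta g_2)=g_1*g_2$ and quote the corresponding lemmas of Appendix~A.1 in \cite{JaissonRosenbaum2016}, verifying only the marginal improvements needed here.
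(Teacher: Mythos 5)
The paper offers no proof of this proposition at all: it is stated with the single sentence ``The following results come from Appendix~A.1 in Jaisson--Rosenbaum (2016)'' and no argument is given. Your primary plan --- quote that appendix and only verify the marginal adjustments --- is therefore exactly what the paper does, and for that route there is nothing to add. Your supplementary sketches for (1), (3) and (5) are correct and standard (for (3), splitting the increment at $s=t-h$ and using $|g_2|\le Ct^{-\alpha}$ near the singularity and $|g_2'|\le Ct^{-\alpha-1}$ away from it does give $O(h^{1-\alpha})$), and the identities in (4)--(6) do follow from $g=I^\beta D^\beta g$ plus associativity once the integrability bookkeeping is done as you describe.

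The one genuine gap is in your sketch of (2). Writing $g_2(s)=g_2(t)+(g_2(s)-g_2(t))$ kills the leading term via the cancellation $\int_0^t(g_1(t+h-s)-g_1(t-s))\,ds=O(h)$, but the remainder
\[
\int_0^t\bigl(g_1(t+h-s)-g_1(t-s)\bigr)\bigl(g_2(s)-g_2(t)\bigr)\,ds
\]
is only $O(h^{\alpha_1})$ by the bounds you invoke ($|g_1(t+h-s)-g_1(t-s)|\le Ch^{\alpha_1}$ pointwise, $|g_2(s)-g_2(t)|\le C(t-s)^{\alpha_2}$ integrated over $[0,t]$); it is not of order $h^{\alpha_1+\alpha_2}$. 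The gain $h^\gamma$ for all $\gamma<\alpha_1+\alpha_2$ in the nontrivial regime $\alpha_1+\alpha_2\le 1$ cannot be extracted from a single subtraction --- one needs a genuinely multiscale (dyadic/Besov) argument exploiting the cancellation of $\int_a^b(g_1(t+h-s)-g_1(t-s))\,ds$ on every subinterval, or a fractional-calculus transfer in the spirit of (4)--(5) (move $\beta$ derivatives from one factor to the other and reduce to case (1)). Since (2) is used in Appendix~B of the paper, if you do not simply cite the reference you would need to supply one of these arguments rather than the estimate as sketched.
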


 We now consider the regularity of the solution $(X,Z)$ of \eqref{eqn.LimitSVE}.
 For $\alpha \in (0,1]$, let $\mathcal{A}^\alpha$ be the collection of all differentiable functions $g$ on $(0,1]$ satisfying that $ t^{1-\alpha} g(t) \in H^\alpha $ and for any $ \gamma \in(-1/2,\alpha)$, both $D^\gamma g$ and $D^\gamma g'$ exist and uniformly in $t\in(0,1]$, 
 \beqlb \label{eqn.B01}
  \big| D^\gamma g(t) \big|+\big|t\cdot  (D^\gamma g)'(t)\big| \leq C\cdot t^{\alpha- 1 - \gamma}.
  \eeqlb
  
 \begin{lemma}
 If $f \in H^\alpha$ or $f \in\mathcal{A}^\alpha$ for some $\alpha \in (1/2,1]$, then $X\in H^1 $ and $Z\in H^{1/2-\epsilon}$  for any $\epsilon\in(0,1/2)$. 	
 \end{lemma}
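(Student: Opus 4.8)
The plan is to reduce the whole statement to the single assertion that the derivative process $Y=X'$ is almost surely bounded on $[0,1]$. By Remark~\ref{Remark.01} we know $Y$ is continuous and satisfies $Y(t)=F(t)\cdot a+\int_0^t f(t-s)\,dZ(s)$, where $Z$ is a continuous martingale with $\langle Z\rangle=X$. Once $\|Y\|_{L^\infty_1}<\infty$ a.s., we immediately get $|X(t)-X(s)|=\big|\int_s^t Y(r)\,dr\big|\le \|Y\|_{L^\infty_1}\,|t-s|$, so $X\in H^1$; and writing $Z=B\circ X$ via the first identity in \eqref{eqn.LimitSVE}, since $X$ is Lipschitz with range in the compact random interval $[0,X(1)]$ and Brownian motion is a.s.\ locally $(1/2-\epsilon)$-H\"older, we obtain $|Z(t)-Z(s)|\le C_\omega|X(t)-X(s)|^{1/2-\epsilon}\le C_\omega\|Y\|_{L^\infty_1}^{1/2-\epsilon}|t-s|^{1/2-\epsilon}$, i.e.\ $Z\in H^{1/2-\epsilon}$ for every $\epsilon\in(0,1/2)$. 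So everything comes down to boundedness of $Y$.

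To prove that, I would combine a moment estimate with the Kolmogorov--Chentsov criterion. First, one needs $\sup_{t\in[0,1]}\mathbf{E}[Y(t)^p]<\infty$ for every $p\ge1$: taking expectations gives $\mathbf{E}[Y(t)]=F(t)\,a$, which is bounded on $[0,1]$ (either directly or from the stochastic Volterra framework cited in Remark~\ref{Remark.01}); and if $\sup_{[0,1]}\mathbf{E}[Y^q]<\infty$, then by the Burkholder--Davis--Gundy inequality and a H\"older split, $\mathbf{E}\big[\big|\int_0^t f(t-s)\,dZ(s)\big|^{2q}\big]\le C\,\mathbf{E}\big[\big(\int_0^t f(t-s)^2Y(s)\,ds\big)^q\big]\le C\big(\int_0^1 f(u)^2\,du\big)^q\sup_{[0,1]}\mathbf{E}[Y^q]$, finite because $f\in L^2_{\mathrm{loc}}$ (for $f\in H^\alpha$ since $f$ is bounded on $[0,1]$; for $f\in\mathcal A^\alpha$ since $|f(u)|\le Cu^{\alpha-1}$ with $2\alpha-2>-1$). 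Iterating yields all moments.

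Next, for $0\le s\le t\le1$ decompose
\[
Y(t)-Y(s)=\big(F(t)-F(s)\big)a+\int_s^t f(t-r)\,dZ(r)+\int_0^s\big(f(t-r)-f(s-r)\big)\,dZ(r),
\]
and estimate each term in $L^p$. As above, BDG plus H\"older give $\mathbf{E}\big[\big|\int_s^t f(t-r)\,dZ(r)\big|^p\big]\le C\big(\int_0^{t-s}f(u)^2\,du\big)^{p/2}$ and $\mathbf{E}\big[\big|\int_0^s(f(t-r)-f(s-r))\,dZ(r)\big|^p\big]\le C\big(\int_0^s(f(t-r)-f(s-r))^2\,dr\big)^{p/2}$. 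For $f\in H^\alpha$, boundedness and $\alpha$-H\"older continuity give $\int_0^{t-s}f^2\le C(t-s)$ and $\int_0^s(f(t-r)-f(s-r))^2\,dr\le C(t-s)^{2\alpha}$, and $|F(t)-F(s)|\le C(t-s)$, so $\mathbf{E}[|Y(t)-Y(s)|^p]\le C(t-s)^{p/2}$ since $\alpha>1/2$. For $f\in\mathcal A^\alpha$, the bound $|f(u)|\le Cu^{\alpha-1}$ gives $\int_0^{t-s}f^2\le C(t-s)^{2\alpha-1}$, and for $\int_0^s(f(t-r)-f(s-r))^2\,dr$, substituting $u=s-r$ and splitting at $u=t-s$, the singular part $u\in(0,t-s)$ is controlled by $|f(u+(t-s))|^2+|f(u)|^2\le Cu^{2\alpha-2}$ and the regular part $u\in(t-s,s)$ by $|f(u+(t-s))-f(u)|\le\int_u^{u+(t-s)}|f'(v)|\,dv\le C(t-s)u^{\alpha-2}$ (using $|f'(v)|\le Cv^{\alpha-2}$, the $\gamma=0$ instance of the definition of $\mathcal A^\alpha$); both pieces integrate to $C(t-s)^{2\alpha-1}$, so $\mathbf{E}[|Y(t)-Y(s)|^p]\le C(t-s)^{p(\alpha-1/2)}$. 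In either case, choosing $p$ large enough that the exponent exceeds $1$ and invoking Kolmogorov--Chentsov produces a locally H\"older-continuous modification of $Y$; since $Y$ is already continuous, this modification is $Y$ itself, whence $\|Y\|_{L^\infty_1}<\infty$ a.s., and the reductions of the first paragraph finish the proof.

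The main obstacle is the $\mathcal A^\alpha$ bound on $\int_0^s(f(t-r)-f(s-r))^2\,dr$: because $f$ blows up like $u^{\alpha-1}$ at the origin, a uniform H\"older difference bound is unavailable, and one must split the domain near $r=s$ and use the size bound on the singular part together with the derivative bound on the regular part; obtaining the common exponent $2\alpha-1$ from both pieces is exactly where the hypothesis $\alpha>1/2$ enters.
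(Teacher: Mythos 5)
Your proof is correct in substance but follows a genuinely different route from the paper's. The paper argues pathwise and deterministically: it first gets $X\in H^\alpha$ from the continuity of $Z$ and the convolution-regularity statements of Proposition~\ref{Prop.B01}, deduces $Z=B\circ X\in H^{\alpha/2-\epsilon}$, and then bootstraps via the implication \eqref{eqn.B02} ($X\in H^\beta\Rightarrow X\in H^{\gamma}$ for $\gamma<(\alpha+\beta/2)\wedge1$), iterating the fixed-point map $\beta\mapsto\alpha+\beta/2$ (whose limit $2\alpha>1$) until $X\in H^1$; the fractional-calculus items of Proposition~\ref{Prop.B01} are what make the convolution step work in the $\mathcal{A}^\alpha$ case. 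You instead work probabilistically on the derivative $Y=X'$, proving $L^p$ bounds on its increments and invoking Kolmogorov--Chentsov to get $\|Y\|_{L^\infty_1}<\infty$ a.s., from which $X\in H^1$ and $Z=B\circ X\in H^{1/2-\epsilon}$ follow at once. Your route avoids the fractional-integration toolbox entirely and, as a by-product, delivers H\"older continuity of $Y$ itself of order $\alpha-1/2-\epsilon$ --- which is precisely the paper's subsequent Lemma~B.3 --- so it proves more in one pass; the paper's route has the advantage of being deterministic given the regularity of $B$ and of reusing the same convolution lemmas throughout the appendix. Two small points to tighten: (i) the initialization of your moment bootstrap, $\mathbf{E}[Y(t)]=F(t)\,a$, requires the stochastic integral to be a true (not merely local) martingale, so you need either a localization/stopping-time argument or the a priori moment bounds for affine Volterra processes from the reference in Remark~\ref{Remark.01} (you acknowledge this, and it is standard); (ii) in the $\mathcal{A}^\alpha$ case the drift increment satisfies only $|F(t)-F(s)|\le C|t-s|^\alpha$ rather than $C|t-s|$, but since $\alpha>\alpha-1/2$ this does not affect the exponent $p(\alpha-1/2)$ you feed into Kolmogorov--Chentsov.
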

 \proof Using the continuity of $Z$, Proposition~\ref{Prop.B01}(1) if $f \in H^\alpha$ or Proposition~\ref{Prop.B01}(3) if $f \in \mathcal{A}^\alpha$, we immediately get that $X \in H^\alpha$. Since the Brownian motion $B \in H^{1/2-\epsilon}$, we also have $Z\in H^{\alpha/2-\epsilon}$ for any $\epsilon\in(0,\alpha/2)$ and the proof is completed when $\alpha=1$. 
 If $\alpha\in (1/2,1)$, the result follows the following statement 
 \beqlb\label{eqn.B02}
 \mbox{if $X\in H^\beta$ for some $\beta\in (0,1)$, then $ X \in H^{\gamma}$ for any $\gamma<(\alpha+\beta/2)\wedge 1$}.
 \eeqlb 
  
  We now prove \eqref{eqn.B02}. 
 If $f \in H^\alpha$, by Proposition~\ref{Prop.B01}(2) we have $f*Z \in H^{(\alpha+\beta/2-\epsilon)\wedge 1}$ and hence \eqref{eqn.B02} holds. 
 On the other hand, if $f \in \mathcal{A}^\alpha$, the factional derivative $ D^{\beta/2-\epsilon} Z$ exists and is continuous by Proposition~\ref{Prop.B01}(4). 
 Using also \eqref{eqn.B01}, we have 
 \beqnn
 \big|I^{\beta/2-\epsilon}f(t)\big| \leq C\cdot t^{\alpha+\beta/2-1-\epsilon} 
 \quad \mbox{and}\quad 
 \big|  (I^{\beta/2-\epsilon}f)'(t)  \big|\leq C\cdot t^{\alpha+\beta/2-2-\epsilon} ,
 \eeqnn
 which, together with Proposition~\ref{Prop.B01}(6), implies that $f*Z = \big(I^{\beta/2-\epsilon}f\big) * \big( D^{\beta/2-\epsilon} Z\big)  \in H^{\alpha+\beta/2-\epsilon}$ for any $\epsilon \in(0,\beta/2)$,
 and thus \ref{eqn.B02} holds. 
 \qed
 
 \begin{lemma}
 	If $f \in H^\alpha$ or $f \in\mathcal{A}^\alpha$ for some $\alpha \in (1/2,1]$, then $Y \in H^{\alpha-1/2-\epsilon}$ for any $\epsilon \in (0,\alpha-1/2)$. 
 	
 \end{lemma}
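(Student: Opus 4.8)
The proof rests on the representation from Remark~\ref{Remark.01}, namely $Y(t)=F(t)\,a+\int_0^t f(t-s)\,dZ(s)$, where $Z$ is a continuous martingale with $d\langle Z\rangle_s=Y(s)\,ds$. The deterministic part $aF$ is harmless: if $f\in H^\alpha$ then $f$ is bounded on $[0,1]$, so $F$ is Lipschitz, while if $f\in\mathcal A^\alpha$ then $|f(t)|\le Ct^{\alpha-1}$, so $|F(t)-F(s)|\le\frac{C}{\alpha}(t^\alpha-s^\alpha)\le C(t-s)^\alpha$; in both cases $aF\in H^{\alpha}\subset H^{\alpha-1/2-\epsilon}$, so it suffices to control the stochastic convolution $G:=f*dZ$. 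Along the way I use the a priori bounds $\sup_{t\in[0,1]}\mathbf{E}[Y(t)^p]<\infty$ for every $p\ge1$. Granting $\mathbf{E}[Y(t)]\le C$ on $[0,1]$ (which follows, e.g., by transferring the uniform moment estimates of Section~\ref{Sec.Moment} to the limit, or by a direct localization argument), the higher moments are obtained by bootstrap in $p$: if $\sup_t\mathbf{E}[Y(t)^k]<\infty$, then by the Burkholder--Davis--Gundy inequality and Jensen's inequality applied to the finite measure $f(t-s)^2\,ds$ (finite because $\alpha>1/2$), one gets $\mathbf{E}[G(t)^{2k}]\le C\big(\int_0^tf(t-s)^2\,ds\big)^{k-1}\int_0^tf(t-s)^2\mathbf{E}[Y(s)^k]\,ds\le C$, hence $\sup_t\mathbf{E}[Y(t)^{2k}]<\infty$; interpolation fills the remaining exponents.

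The core is a Kolmogorov--Chentsov estimate for $G$. Fix $0<s<t\le1$, set $h=t-s$, and decompose
\[
G(t)-G(s)=\int_s^t f(t-r)\,dZ(r)+\int_0^s\big(f(t-r)-f(s-r)\big)\,dZ(r)=:I_1+I_2 .
\]
For $I_1$, BDG, Jensen and the moment bound give $\mathbf{E}[|I_1|^p]\le C\big(\int_0^{h}f(u)^2\,du\big)^{p/2}$, which is $\le Ch^{p/2}$ when $f\in H^\alpha$ (bounded $f$) and $\le Ch^{p(\alpha-1/2)}$ when $f\in\mathcal A^\alpha$ (using $|f(u)|\le Cu^{\alpha-1}$ and $\alpha>1/2$); in all cases $\le Ch^{p(\alpha-1/2)}$. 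For $I_2$ when $f\in H^\alpha$ one has $|f(t-r)-f(s-r)|\le Ch^\alpha$ for all $r\in[0,s)$, so $\mathbf{E}[|I_2|^p]\le Ch^{p\alpha}$. When $f\in\mathcal A^\alpha$, $f$ is unbounded at $0$ and not globally Hölder, so I would split $I_2$ at $r=s-h$ (the part on $[0,s-h]$ being empty if $h>s$): on $[s-h,s]$ both integrands are handled exactly as $I_1$, since $\int_0^{2h}f(u)^2\,du=O(h^{2\alpha-1})$; on $[0,s-h]$ one is away from the singularity, and the bound $|f'(u)|\le Cu^{\alpha-2}$ contained in the definition of $\mathcal A^\alpha$ (case $\gamma=0$) gives $|f(t-r)-f(s-r)|^2\le Ch^2(s-r)^{2\alpha-4}$, whence $\int_0^{s-h}h^2(s-r)^{2\alpha-4}\,dr=h^2\int_h^s u^{2\alpha-4}\,du\le Ch^{2\alpha-1}$, and BDG with Jensen again yields $\le Ch^{p(\alpha-1/2)}$. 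Combining, $\mathbf{E}[|G(t)-G(s)|^p]\le C(t-s)^{p(\alpha-1/2)}$ for every $p\ge1$; the Kolmogorov--Chentsov theorem then gives a version of $G$ that is $(\alpha-1/2-1/p)$-Hölder on $[0,1]$ for all $p$, i.e.\ $G\in H^{\alpha-1/2-\epsilon}$ for every $\epsilon\in(0,\alpha-1/2)$, and therefore so does $Y=aF+G$.

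I expect the main obstacle to be the estimate of $I_2$ in the $\mathcal A^\alpha$ regime: since $f(u)\approx u^{\alpha-1}$ blows up at the origin, $f$ is not $\alpha$-Hölder on $(0,1]$ and its increment $f(t-r)-f(s-r)$ cannot be bounded uniformly in $r$. One must separate the contribution within distance $h$ of the singularity (controlled by the $L^2$-mass of $f$ on $[0,2h]$, exactly as in the estimate for $I_1$) from the contribution away from it (controlled by the pointwise derivative bound built into the definition of $\mathcal A^\alpha$), and balance the two so that both land at the exponent $\alpha-1/2$; this balance is precisely what produces the Hölder exponent $\alpha-1/2$ in the statement.
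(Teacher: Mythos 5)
Your proof is correct, but it follows a genuinely different route from the paper's. The paper argues pathwise via fractional calculus: writing $f*Z=(D^{1-\gamma}f)*(I^{1-\gamma}Z)$ and differentiating, it obtains $Y(t)=F(t)a+(D^{1-\gamma}f)*(D^{\gamma}Z)(t)$, then combines the H\"older regularity of $D^{\gamma}Z$ (inherited from $Z=B\circ X\in H^{1/2-\epsilon}$, established in the preceding lemma) with the regularity or pointwise bounds of $D^{1-\gamma}f$ and the deterministic convolution estimates of Proposition~\ref{Prop.B01}, finally letting the free parameter $\gamma\in(1-\alpha,1/2)$ approach $1/2$. You instead work directly with the stochastic convolution, using BDG, Jensen against the finite measure $f(t-r)^2\,dr$, a near/far splitting of the singularity of $f$ at the origin, and Kolmogorov--Chentsov. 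Your decomposition of $I_2$ in the $\mathcal{A}^\alpha$ case is exactly right, and the exponents balance as you say. The trade-offs: the paper's argument needs no moment estimates on $Y$ at all (only the a.s.\ H\"older continuity of $Z$) and recycles Proposition~\ref{Prop.B01}; yours requires the uniform bounds $\sup_{t\le 1}\mathbf{E}[Y(t)^p]<\infty$, which are not proved in the paper but do follow from your sketch (the base case $\sup_t\mathbf{E}[Y(t)]<\infty$ is obtainable by Fatou from the uniform bound $\sup_t\mathbf{E}[\beta_n^2\lambda^n(t)]\le C$ of Section~\ref{Sec.Moment}, and the bootstrap in $p$ is as you describe). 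In return your method yields quantitative moment bounds on the increments and, in the case $f\in H^\alpha$, actually gives the stronger conclusion $Y\in H^{1/2-\epsilon}$ since there $I_1$ contributes $h^{p/2}$ and $I_2$ contributes $h^{p\alpha}$ with $\alpha>1/2$. One small point to make explicit: Kolmogorov--Chentsov produces a H\"older \emph{modification} of $G$, which you should identify with $Y-aF$ itself; this is immediate because $Y$ is already known to be continuous by Remark~\ref{Remark.01}, so the two continuous processes are indistinguishable.
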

 \proof For any $ \gamma\in(1-\alpha, 1/2)  $, the fractional derivative $D^{1-\gamma}f$ exists and belongs to $H^{\alpha-\gamma}$ when $f \in H^\alpha$ (see Proposition~\ref{Prop.B01}(4)) or $L^p$ with $p>1$ when $f \in \mathcal{A}^\alpha$ (see Proposition~\ref{Prop.B01}(6)). 
 Additionally, by the definition we have  
 \beqnn
 I^{1-\gamma} Z(t) = \int_0^t D^{\gamma}Z(s) \, ds,\quad t\geq 0.
 \eeqnn 
 Using Proposition~\ref{Prop.B01}(4) again and then Fubini's theorem, we have for any $t \in[0,1]$, 
 \beqnn
 f*Z(t) = \big(D^{1-\gamma} f \big) * \big(I^{1-\gamma} Z \big)(t) 
 \ar=\ar	\int_0^t D^{1-\gamma}f(t-s) \, ds
 \int_0^s D^{\gamma} Z(r)
 \, dr   \cr
 \ar=\ar \int_0^t (D^{1-\gamma}f)* (D^{\gamma} Z)(s)\, ds.
 \eeqnn 
 Plugging this back into \eqref{eqn.LimitSVE} yields that 
 \beqnn
 X(t) = \int_0^t Y(s)\, ds
 \quad \mbox{with}\quad 
 Y(t)=  F(t) \cdot a    +  (D^{1-\gamma}f)* (D^{\gamma} Z)(t) ,\quad t\geq 0. 
 \eeqnn
 Since $F$ is Lipschitz continuous, it suffices to prove that $ (D^{1-\gamma}f)* (D^{\gamma} Z) \in H^{\alpha-1/2-\epsilon}$.
 Indeed, when $f \in H^\alpha$, it follows directly from Proposition~\ref{Prop.B01}(2) along with the two facts that $D^{1-\gamma}f\in H^{\alpha-\gamma}$ and $D^{\gamma} Z \in H^{1/2-\gamma-\epsilon}$. 
 When $f \in \mathcal{A}^\alpha$, we have 
 \beqnn
 \big|D^{1-\gamma}f(t)\big|  \leq C\cdot t^{ \alpha+\gamma-2} 
 \quad \mbox{and}\quad 
 \big|  (D^{1-\gamma}f)'(t)   \big|\leq C\cdot t^{ \alpha+\gamma-3},
 \eeqnn
 which, together with Proposition~\ref{Prop.B01}(6) and the continuity of $D^{\gamma} Z$, induces that $(D^{1-\gamma}f)* (D^{\gamma} Z) \in H^{ \alpha+\gamma-1}$.
 Finally, the arbitrariness of $ \gamma\in(1-\alpha, 1/2)$ shows that $(D^{1-\gamma}f)* (D^{\gamma} Z) \in H^{\alpha-1/2-\epsilon}$.
 \qed

 \end{document}